\newtheorem{theorem}{Theorem}
\newtheorem{remark}[theorem]{Remark}
\newtheorem{lemma}[theorem]{Lemma}
\newtheorem{proposition}[theorem]{Proposition}
\newtheorem{notation}[theorem]{Notation}
\newcommand{\ce}{c_{\epsilon}}
\newcommand{\cepl}{c^+_{\epsilon}}
\newcommand{\cemin}{c^-_{\epsilon}}
\newcommand{\cepm}{c^{\pm}_{\epsilon}}
\newcommand{\cem}{c^M_{\epsilon}}
\newcommand{\tcepm}{\widetilde{c}^{\pm}_{\epsilon}}
\newcommand{\dem}{D^M_{\epsilon}}
\newcommand{\fepm}{f^{\pm}_{\epsilon}}
\newcommand{\gem}{g^M_{\epsilon}}
\newcommand{\pepm}{\phi_{\epsilon}^{\pm}}
\newcommand{\pem}{\phi_{\epsilon}^M}
\newcommand{\hop}{H^1_{\#}}
\newcommand{\app}{\mathrm{app}}
\newcommand{\bl}{\mathrm{bl}}
\newcommand{\bpem}{\overline{\phi}_{\epsilon}^M}
\newcommand{\x}{\bar{x}}
\newcommand{\y}{\bar{y}}
\newcommand{\fxe}{\frac{x}{\epsilon}}
\renewcommand{\oe}{\Omega_{\epsilon}}
\newcommand{\oem}{\Omega_{\epsilon}^M}
\newcommand{\oepl}{\Omega_{\epsilon}^+}
\newcommand{\oemi}{\Omega_{\epsilon}^-}
\newcommand{\sepm}{S_{\epsilon}^{\pm}}
\newcommand{\sepl}{S_{\epsilon}^+}
\newcommand{\semi}{S_{\epsilon}^-}
\newcommand{\oepm}{\Omega_{\epsilon}^{\pm}}
\newcommand{\foe}{\frac{1}{\epsilon}}
\newcommand{\N}{\mathbb{N}}
\newcommand{\R}{\mathbb{R}}
\newcommand{\ie}{i.\,e., }
\newcommand{\loc}{\mathrm{loc}}
\renewcommand{\u}{\bar{u}}
\newcommand{\eg}{e.\,g.}
\newcommand{\zi}{Z_{\infty}}
\title{Correctors and error estimates for reaction-diffusion processes through thin heterogeneous layers in case of homogenized equations with interface diffusion}
\author{M. Gahn\thanks{Interdisciplinary Center for Scientific Computing, University of Heidelberg,   Heidelberg, Germany. \textit{Mail: markus.gahn@iwr.uni-heidelberg.de}}
\and W. J\"ager\thanks{Interdisciplinary Center for Scientific Computing, University of Heidelberg,   Heidelberg, Germany. \textit{Mail: wjaeger@iwr.uni-heidelberg.de}}
 \and  M. Neuss-Radu\thanks{Department of Mathematics, Friedrich-Alexander-Universit\"at Erlangen-N\"urnberg,  Erlangen, Germany. \textit{Mail: maria.neuss-radu@math.fau.de }} }
\date{}
\begin{document}

\maketitle

\begin{abstract}
In this paper, we construct approximations of the microscopic solution of a nonlinear reaction--diffusion equation in a domain consisting of two bulk-domains, which are separated by a thin  layer with a periodic heterogeneous structure. The size of the heterogeneities and thickness of the layer are of order $\epsilon$, where the parameter $\epsilon$ is small compared to the length scale of the whole domain. In the limit $\epsilon \to 0$, when the thin layer reduces to an interface $\Sigma$ separating two bulk domains, a macroscopic model with effective interface conditions across $\Sigma$ is obtained.  Our approximations are obtained by adding corrector terms to the macroscopic solution, which take into account the oscillations in the thin layer and the coupling conditions between the layer and the bulk domains. To validate these approximations, we prove error estimates with respect to $\epsilon$. Our approximations are constructed in two steps leading to error estimates of order $\epsilon^{\frac12}$ and $\epsilon$ in the $H^1$-norm.
\end{abstract}

\noindent\textbf{Keywords:}
Asymptotic analysis;  error estimates; correctors; boundary layers; reaction--diffusion equations; thin heterogeneous layer

\noindent\textbf{MSC:}
35B27; 35K57; 35C20

\section{Introduction}

Problems including reactive transport processes through thin layers with a heterogeneous structure play an important role in many applications, especially from biosciences, medical sciences, geosciences, and material sciences. We mention here as an example the physiological processes in blood vessels, where the endothelial layer, separating the lumen (region occupied by blood flow) from the vessel wall, mediates and controls the exchange between these two regions. In \cite{Telma} a detailed model for processes at the endothelium is given by using phenomenologically derived effective interface laws. However, multi-scale techniques for the rigorous derivation of such laws starting from microscopic models, the study of their validity range and of the accuracy of the approximations are urgently needed. The  techniques developed in this paper give an important contribution to this field, even though our model problem is limited to reaction--diffusion processes, and thus omitting further aspects like e.g. advective transport or mechano-chemical interactions.  

In this paper, we consider a nonlinear reaction--diffusion equation in a domain $\oe$ consisting of two bulk-domains $\oepl$ and $\oemi$ which are separated by a thin layer $\oem$ with a periodic heterogeneous structure. The thickness of the thin layer as well as the period of the heterogeneities are of order $\epsilon>0$, where the parameter $\epsilon $ is small compared to the length scale of the whole domain $\oe$. Across the interfaces $\sepm$ between the bulk-domains $\oepm$ and the thin layer $\oem$ we assume continuity of the solution and its normal flux. The numerical computation of the solution to this type of problems faces a high complexity. Therefore, 
we construct approximations of the microscopic solution which can be calculated with less numerical effort, and prove error estimates between the approximation and the microscopic solution with respect to the scaling parameter $\epsilon$. Such error estimates are important for the justification of the approximation as well as for predictions about its accuracy. There is a rich literature on problems in thin domains with applications in solid mechanics, wave diffraction, porous media and so on, where we have to mention the monographs \cite{SanchezPalencia1980} and \cite{BakhvalovPanasenko1989}.   Results about the derivation of effective transmission conditions by multi-scale techniques for domains separated by thin heterogeneous layers can be found e.g. in \cite{BourgeatGipoulouxMarusicPalokaModellingUndergroundWaste,BourgeatMarusicPaloka2005,ConcaI1987,ConcaII1987,GahnEffectiveTransmissionContinuous,GahnNeussRaduKnabner2018a,Orlik2016,Orlik2017,Moussa_2012,NeussJaeger_EffectiveTransmission,PopBogersKumar2017}. However, error estimates for thin heterogeneous layers coupled to bulk-domains have hardly been considered in literature so far.
 We mention here the paper \cite{Panasenko1981}, where an elliptic problem in a domain including a thin heterogeneous layer with thickness of order $\epsilon$ is treated for different scalings for the diffusion coefficients with respect to $\epsilon$. More precisely, based on the Bakhvalov-ansatz the author derives higher order asymptotic approximations for the microscopic solution and derives error estimates with respect to $\epsilon$. This method uses high regularity results for the microscopic and the macroscopic solutions, and therefore for the data. The asymptotic expansion is formally extended to nonlinear and nonstationary problems.
In  \cite{AllaireII1991,bourgeat1997ecoulement,Marusic1998}  the asymptotic behavior of a fluid flow through a filter formed by $\epsilon$-periodic distribution of obstacles of size $\epsilon^\beta$ distributed on a hypersurface is considered and error estimates are proved. In \cite{JaegerOleinikShaposhnikova} a similar geometrical setting was considered and the Poisson equation with mixed boundary conditions on the boundary of the obstacles was studied. In \cite{Shaposhnikova2001} a Neumann problem in a domain containing a thin filter consisting of periodically distributed channels is considered.

In  \cite{GahnEffectiveTransmissionContinuous,GahnNeussRaduKnabner2018a,NeussJaeger_EffectiveTransmission} reaction-diffusion equations through thin heterogeneous layers were studied for different scalings in the thin layer, and effective models for $\epsilon \to 0$  were derived. In the singular limit, the thin layer reduces to a $(n-1)$-dimensional interface $\Sigma$ separating the bulk-regions $\Omega^+$ and $\Omega^-$. In these bulk-domains the evolution of the limit problem carries the same structure as in the microscopic problem, whereas at the interface $\Sigma$ effective interface laws emerge. 
Of particular importance is the choice of the 
scaling of the coefficients, especially in the equations in the thin layer. The scaling highly influences the structure of the macroscopic model 
and  depends  on the particular application. 

In the present paper, we consider a specific scaling from \cite{GahnEffectiveTransmissionContinuous} leading to a reaction-diffusion equation on the interface $\Sigma$ in the limit $\epsilon \to 0$.  This effective interface condition for the macroscopic model is similar to a result in \cite{Panasenko1981} (see the interface condition (1.10)). However, in our case we consider a different scaling for the microscopic equation, and a nonlinear and nonstationary problem. Further, we consider a scaling for the reaction term in the thin layer which gives an additional contribution in the effective interface condition.
For this situation, we investigate the quality of the approximation of the microscopic solution by means of the macroscopic one. 
In general, we cannot expect strong convergence of the gradients or high-order error estimates with respect to $\epsilon$. For such results we have to add additional corrector terms to the macroscopic solution which take into account the oscillations in the thin layer and also the coupling conditions between the bulk-regions and the layer. The construction of the approximations is made in two steps. Firstly, we add to the macroscopic solution in the thin layer a corrector of order $\epsilon$, which carries information about the oscillations in the layer. This leads to error estimates of order $\epsilon^{\frac12}$ in the $H^1$-norms, see Theorem \ref{MainTheoremFirstOrderApproximation}. To obtain a better estimate, in a second step,  we add a corrector term of first order to the macroscopic solutions in the bulk-domains (which equilibrates the discontinuity of the approximation across $\sepm$) and an additional second order corrector to the macroscopic solution in the layer (which equilibrates the discontinuity of the normal fluxes of the approximation across $\sepm$). 
This strategy of stepwise building up the correctors has also been used e.g. in \cite{JaegerMikelic1996}.
The resulting approximation leads to an error estimate of order $\epsilon$ in the $H^1$-norms, see Theorem \ref{MainTheoremSecondOrderApproximation}.

The major challenge in our paper is the simultaneous scale transition for the thickness of the layer and the periodic heterogeneous structure within the layer, as well as the coupling between the bulk-domains and the thin layer, where we additionally have to take into account different kinds of scaling. 
In this context, the presence of nonlinear reaction terms creates additional difficulties. These specific features of our problem are also reflected by differences in the form of the correctors in the two regions (bulk domains and thin layer):
The order of the corrector terms with respect to $\epsilon$ is different in the two regions. 
Furthermore, in the layer, the correctors are obtained by products of the derivatives of the macroscopic solution and solutions of suitable cell problems on a bounded reference element $Z$, whereas in the bulk-domains the correctors include solution of boundary layer problems in infinite stripes. 

To justify the determined approximations, we prove error estimates. Roughly speaking, we apply the microscopic differential operator to the microscopic solution as well as to the approximations and subtract the terms from each other. 
To estimate the arising terms on the right hand side, the main idea is to represent solenoidal vector fields by the divergence of skew symmetric matrices. Integration by parts then yields an additional factor $\epsilon$ which can be exploited for the error estimates. This approach has been previously used in \cite{JikovKozlovOleinik1994} for vector fields defined on the standard periodicity cell and periodic in all directions, and in \cite{neuss2001boundary} for boundary layers. In our case  the situation is more difficult and we have to construct skew-symmetric matrices adapted to the structure of our problem. More precisely, these matrices have to be such that boundary terms which occur at the interfaces between the bulk domains and the thin layer vanish.  

Our paper is structured as follows: In Section \ref{SectionMicroscopicModel}
we present the microscopic model, the assumptions on the data as well as the a priori estimates for the microscopic solution. In Section \ref{SectionMainResults} we give the general form of the two approximations for the microscopic solution and state the corresponding error estimates. The macroscopic model and the higher order correctors are introduced in  Sections \ref{SectionZerothOrderProblem} and \ref{SectionAuxiliaryProblems} respectively. The proof for the error estimates is given in Section \ref{SectionErrorEstimates}. We conclude the paper with a short appendix about the regularity of the solution to the macroscopic problem.

\subsection{Original contributions}

In the literature, there are several results dealing with singular limits for reactive transport processes through  thin layers with heterogeneous structures. However, results including error estimates with respect to $\epsilon$ seem to be rather rare, especially with regard to nonlinear problems.  In this paper we construct asymptotic approximations for the microscopic solution of a semilinear reaction-diffusion problem in a domain including a thin heterogeneous layer, and derive error estimates with respect to the scaling parameter $\epsilon$. Besides providing control on the quality of the approximation, the correctors constructed by means of boundary layers have the suitable complexity to be used for numerical simulations (reducing the high complexity induced by the microstructure). The main contributions of our paper are:
\begin{enumerate}[label = -]
\item the construction of approximations for the microscopic solution including correctors and boundary layers adapted to the scaling in the microscopic problem and the microscopic structure of the thin layer, as well as to the transmission conditions at the bulk-layer interfaces,
\item the derivation of  error estimates of order $\sqrt{\epsilon}$ and $\epsilon$ in the context of nonlinear problems which combine the classical approach of homogenization for microscopic structures with a singular limit approach,
 \item proving of error estimates under low regularity assumptions on the data and therefore low regularity for the microscopic and the macroscopic solution,
\item extending the approach for the representation of solenoidal vector fields by the divergence of skew symmetric matrices to boundary layers involving transmission conditions.
\end{enumerate}

\section{The microscopic model}
\label{SectionMicroscopicModel}

We consider the domain $\oe := \Sigma \times (-\epsilon -H,H + \epsilon) \subset \R^n$ with fixed $H \in \N$, $n\geq 2$, and $\Sigma =(0,l_1)\times \ldots, \times (0,l_{n-1}) \subset \R^{n-1}$
with $l = (l_1,\ldots,l_{n-1}) \in \N^{n-1}$. Further, let $\epsilon >0$ be a sequence with $\epsilon^{-1} \in \N$.  The set $\oe$ consists of  three subdomains, see Figure \ref{FigurMikroskopischesGebietDuenneSchicht},  given by
\begin{align*}
\oepl &:= \Sigma \times (\epsilon,H + \epsilon),
\\
\oem &:= \Sigma \times (-\epsilon,\epsilon),
\\
\oemi &:= \Sigma \times (-\epsilon -H,-\epsilon).
\end{align*}

The domains $\oepm$ and $\oem$ are separated by an interface $\sepm$, \ie
\begin{align*}
\sepl:= \Sigma \times \{\epsilon\} \quad \mbox{ and } \quad \semi:= \Sigma \times \{-\epsilon\},
\end{align*}
hence, we have $\oe = \oepl \cup \oemi \cup \oem \cup \sepl \cup \semi$.

\begin{figure}[h]
\centering
\includegraphics[scale=0.18]{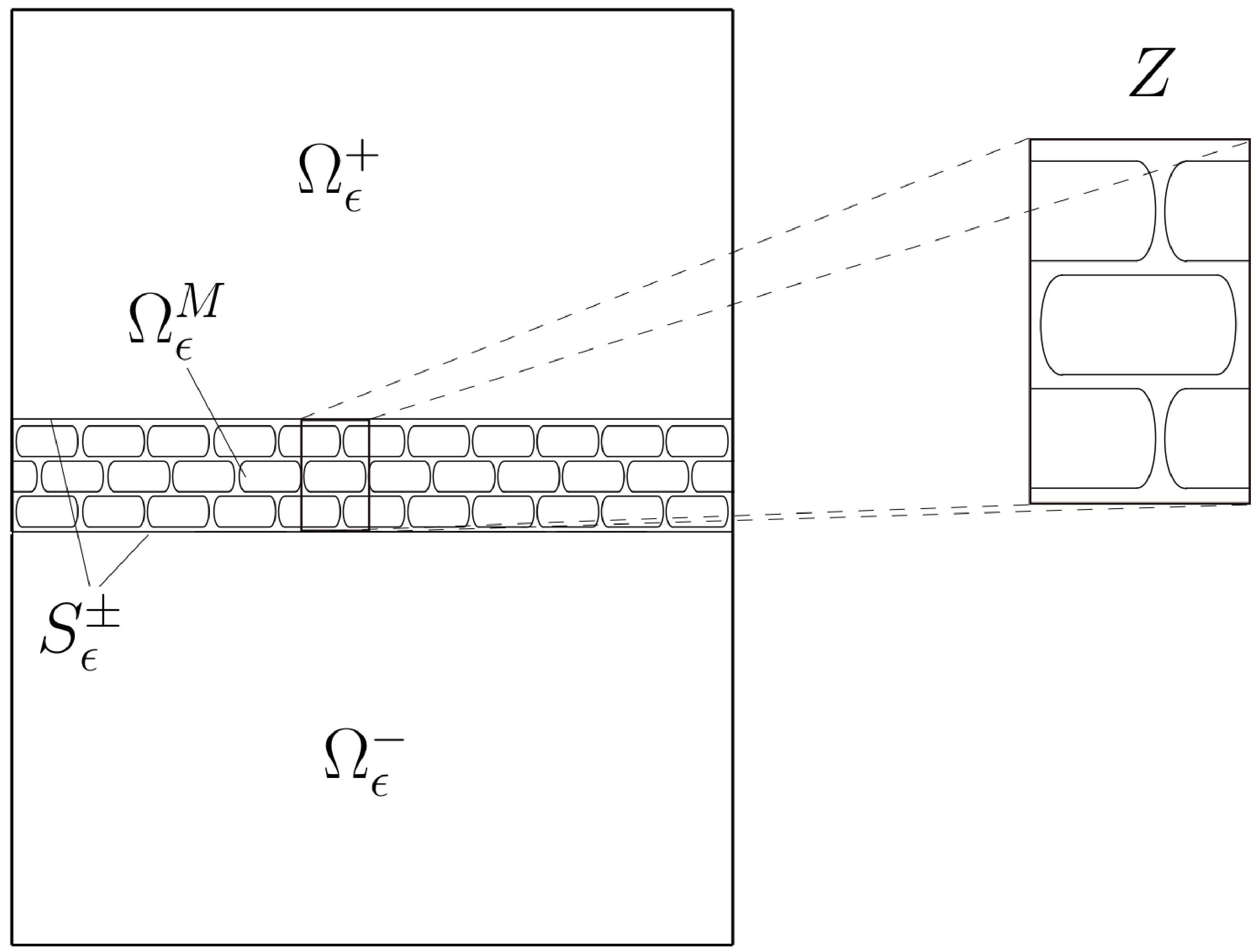}
 \caption{The microscopic domain containing the thin layer $\oe$ with periodic structure for $n=2$. The heterogeneous structure for the thin layer is modeled by the diffusion coefficient $\dem$, see Assumption \ref{VoraussetzungenDuenneSchichtDiffusionskoeffizienten}.}\label{FigurMikroskopischesGebietDuenneSchicht}
\end{figure}

 As mentioned above, for $\epsilon \to 0$ the membrane $\oem$ reduces to an interface $\Sigma \times \{0\}$, which we also denote by $\Sigma$ suppressing the $n$-th component,  and we define 
\begin{align*}
\Omega^+:=\Sigma \times (0,H) \quad \mbox{ and } \quad \Omega^-:=\Sigma\times (-H,0),
\end{align*}
and $\Omega:= \Omega^+ \cup \Sigma \cup \Omega^- = \Sigma \times (-H,H)$.
The microscopic structure within the thin layer $\oem$ can be described by shifted and scaled reference elements. We define
\begin{align*}
Y^k &:= (0,1)^k \quad \mbox{ for } k \in \N,
\\
Z &:= Y^{n-1} \times (-1,1).
\end{align*}
We denote the upper and lower boundary of $Z$ by
\begin{align*}
S^+:= Y^{n-1} \times \{1\} \quad \mbox{and} \quad S^-:= Y^{n-1} \times \{-1\} .
\end{align*}

Now, we consider a reaction-diffusion problem describing the evolution of a substance with  concentration $c_\epsilon$ in the domain $\Omega_\epsilon$. Let $\ce = (\cepl,\cem,\cemin)$ with $\cepm: (0,T)\times \oepm \rightarrow \R$ and $\cem: (0,T)\times \oem \rightarrow \R$ be a solution of the following microscopic problem
\begin{subequations}\label{MicroscopicModel}
\begin{align}
\begin{aligned}\label{PDE_ReacDiffEquation}
\partial_t \cepm - \nabla \cdot \left( D^{\pm} \nabla \cepm \right)  &= \fepm(t,x,\cepm) &\mbox{ in }& (0,T) \times \oepm,
\\
\foe \partial_t \cem - \foe \nabla \cdot \left(\dem \nabla  \cem \right)  &= \frac{1}{\epsilon} \gem(t,x,\cem)&\mbox{ in }& (0,T)\times \oem,
\end{aligned}
\end{align}
together with continuous transmission conditions across the interfaces $\sepm$
\begin{align}
\begin{aligned}
\cepm &= \cem &\mbox{ on }& (0,T) \times \sepm,
\\
- D^{\pm}\nabla \cepm\cdot \nu &= -\foe \dem\nabla \cem  \cdot \nu &\mbox{ on }& (0,T)\times \sepm,
\end{aligned}
\end{align}
where $\nu$ denotes the outer unit normal on $\sepm$ with respect to $\oem$, and
the outer boundary conditions
\begin{align}
\begin{aligned}
 D^{\pm} \nabla \cepm \cdot \nu &= 0 &\mbox{ on }& (0,T) \times \Sigma \times \{\pm(H + \epsilon)\},
\\
\ce \mbox{ is }&\Sigma\mbox{-periodic},
\end{aligned}
\end{align}
and the initial condition
\begin{align}
\begin{aligned}
\label{InitialConditions}
\ce(0,x)= c_{\epsilon}^0(x) := \begin{cases}
c_{\epsilon}^{0,\pm}\left(\x, x_n \mp \epsilon\right) &\mbox{ for } x \in \oepm,
\\
c^{0,M}_{\epsilon}\left(x\right) &\mbox{ for } x \in \oem.
\end{cases}
\end{aligned}
\end{align}
\end{subequations}
Here, $D^\pm$ and $D^M$ denote the diffusion coefficients of the substance in the bulk regions and in the thin layer respectively. The functions $f_\epsilon^\pm$ and $g_\epsilon^M$ model nonlinear reaction terms. Note that the heterogeneous structure of the thin layer is modeled by the oscillating diffusion coefficient $D^M_\epsilon$ and by the dependence of the reaction term $g_\epsilon^M$ on the microscopic variable $y= \frac{x}{\epsilon}$.

In the following, we define for an arbitrary interval $I \subset \R$ and a rectangle $W\subset \R^{n-1}$ the Sobolev space of $W$-periodic functions, \ie the space of functions which are $W$-periodic with respect to first $n-1$ components: 
\begin{align*}
H^k_{\#}(W \times I):= \left\{ u \in H^k(W\times I)\, : \, u \mbox{ is } W\mbox{-periodic}\right\},
\end{align*}
for $k\in \N$. In a similar way we also use the index $\#$ for  spaces with $Y^{n-1}$-periodic functions, \eg,  $C^{0,1}_{\#}\left(\overline{Z}\right):= \left\{u \in C^{0,1}\left(\overline{Z}\right) \, : \, u \mbox{ is } Y^{n-1}\mbox{-periodic}\right\}$.

\vspace{1em}

\noindent\textbf{Assumptions on the data:}

\begin{enumerate}
[label = (A\arabic*)]
\item \label{VoraussetzungenDuenneSchichtDiffusionskoeffizienten} It holds $D^{\pm}  \in \R^{n\times n} $ is symmetric and positive-definite,  and $\dem(x)=D^M\left(\fxe\right)$ with  $D^M \in C_{\#}^{0,1}\big(\overline{Z}\big)^{n\times n}$. Further,  $D^M$ is symmetric and coercive, \ie there exists a constant $c_0>0$ such that 
\begin{align*}
D^M(y)\xi \cdot \xi &\geq c_0 \|\xi\|^2, \quad \mbox{for all } y \in Z,\, \xi \in \R^n.
\end{align*}
\item \label{VoraussetzungenDuenneSchichtReaktionskinetikBulk}
We have $\fepm(t,x,z) = f^{\pm}\left(t,\fxe,z\right)$ with $f^{\pm}: [0,T]  \times \R^n \times \R \rightarrow \R$ is continuous,   $Y^n$-periodic with respect to the second variable, and uniformly Lipschitz continuous with respect to the third variable.
The uniform Lipschitz condition ensures the estimate
\begin{align*}
|f^{\pm}(t,y,z)| \le C\big(1 + |z|\big) \quad \mbox{ for all } (t,y.z) \in [0,T] \times \R^n\times \R.
\end{align*}
\item \label{VoraussetzungenDuenneSchichtReaktionskinetikMembran}
We have $\gem(t,y,z) = g^M\left(t,\fxe,z\right)$ with  $g^M: [0,T] \times \R^{n-1} \times [-1,1] \times \R \rightarrow \R$ is continuous, uniformly Lipschitz continuous with respect to the last variable, and $Y^{n-1}$-periodic with respect to the second variable. As above, we have
\begin{align*}
|g^M(t,y,z)|\le C\big(1 + |z|\big) \, \mbox{ for all } (t,y,z)\in [0,T]\times \R^{n-1} \times [-1,1] \times \R.
\end{align*}
Note that here, the variable $y$ describes the independent variable corresponding to the standard cell $Z$, and the variable $z$ corresponds to the dependent variable modeling the concentration $c^M_\epsilon$.

\item \label{VoraussetzungenDuenneSchichtAnfangsbedingungen} For the initial functions, we assume $c_{\epsilon}^{0,\pm} \in H_{\#}^1(\Omega^{\pm})$  and $c_{\epsilon}^{0,M}\in H_{\#}^1(\oem )$ with $c_{\epsilon}^{0,\pm}|_{\Sigma} = c_{\epsilon}^{0,M}|_{\sepm}$, and they fulfill the following estimate
\begin{align*}
\big\|c_{\epsilon}^{0,\pm} \big\|_{H^1(\Omega^{\pm})} + \frac{1}{\sqrt{\epsilon}} \big\|c_{\epsilon}^{0,M} \big\|_{H^1(\oem)} \le C.
\end{align*}
Further we assume that there exist $c^{0,\pm} \in H_{\#}^1(\Omega^{\pm})$ and $c^{0,M} \in H_{\#}^1(\Sigma)$ with $c^{0,\pm}|_{\Sigma} = c^{0,M}$, such that
\begin{align}\label{ErrorEstimateInitialValues}
\big\|c_{\epsilon}^{0,\pm} - c^{0,\pm}\big\|_{L^2(\Omega^{\pm})} + \frac{1}{\sqrt{\epsilon}} \big\|c_{\epsilon}^{0,M} - c^{0,M} \big\|_{L^2(\oem)} \le C\epsilon.
\end{align}
\end{enumerate}

\noindent\textbf{Weak formulation}
A function $\ce \in L^2((0,T),\hop(\oe))\cap H^1((0,T),L^2(\oe))$ is called a weak solution of Problem $\eqref{PDE_ReacDiffEquation}$, if for all $\phi \in \hop(\oe)$,  and almost every $t\in (0,T)$ it holds that
\begin{align}
\label{VE_OmegaEps}
\begin{aligned}
 \sum_{\pm} & \left[\int_{\oepm}\partial_t \cepm \phi dx + \int_{\oepm}  D^{\pm} \nabla \cepm \cdot \nabla \phi dx  \right] 
+ \frac{1}{\epsilon}\int_{\oem}\partial_t \cem \phi dx 
\\
&+ \foe\int_{\oem} \dem \nabla \cem \cdot \nabla \phi dx
= \sum_{\pm} \int_{\oepm} \fepm \big(\cepm\big) \phi dx + \foe \int_{\oem} \gem\big(\cem\big) \phi dx
\end{aligned}
\end{align}
together with the initial condition $\eqref{InitialConditions}$.

To work on the fixed domains $\Omega^{\pm}$, we shift the $\epsilon$-dependent domains $\oepm$ to the fixed domains $\Omega^{\pm}$, and give an equivalent formulation for a weak solution on the fixed domains $\Omega^{\pm}$. We define
\begin{align*}
\tcepm: (0,T)\times \Omega^{\pm} \rightarrow \R,\quad \tcepm(t,x):= \cepm(t,\x,x_n \pm \epsilon).
\end{align*}
Then $\ce$ is a weak solution of Problem $\eqref{PDE_ReacDiffEquation}$, iff $\tcepm \in L^2((0,T),\hop(\Omega^{\pm})) \cap H^1((0,T),L^2(\Omega^{\pm}))$ and $\cem \in L^2((0,T),\hop(\oem))\cap H^1((0,T),L^2(\oem))$ with $\cem|_{\sepm} = \tcepm|_{\Sigma}$, and for all $\pepm \in \hop(\Omega^{\pm})$ and $\pem\in \hop(\oem)$ with $\pepm|_{\Sigma} = \pem|_{\sepm}$ it holds that
\begin{align}
\begin{aligned}\label{VE_Omega}
 \sum_{\pm} & \left[\int_{\Omega^{\pm}}\partial_t \tcepm \pepm dx + \int_{\Omega^{\pm}}  D^{\pm} \nabla \tcepm \cdot \nabla \pepm dx  \right] 
+ \frac{1}{\epsilon}\int_{\oem}\partial_t \cem \pem dx 
\\
+& \foe \int_{\oem} \dem \nabla \cem \cdot \nabla \pem dx
= \sum_{\pm} \int_{\Omega^{\pm}} \fepm \big(\tcepm\big) \pepm dx + \foe \int_{\oem} \gem\big(\cem\big) \pem dx
\end{aligned}
\end{align}
together with the initial condition
\begin{align*}
\tilde{c}_{\epsilon}(0,x)= \begin{cases}
c_{\epsilon}^{0,\pm}(x) &\mbox{ for } x \in \Omega^{\pm},
\\
c_{\epsilon}^{0,M}(x) &\mbox{ for } x \in \oem.
\end{cases}
\end{align*}

\begin{notation}
In the following, we suppress the $\tilde{\cdot}$ and use the same notation $\cepm$ for both, the shifted function, and the function itself.
\end{notation}

We have the following existence and uniqueness result for the microscopic problem. A detailed proof can be found in \cite{GahnEffectiveTransmissionContinuous}, which can be easily extended to our slightly more general assumptions.

\begin{proposition}\label{ExistenceAprioriMicroscopicProblem}
There exists a unique weak solution $\cepm \in L^2((0,T),H^1(\Omega^{\pm}))$ with $\partial_t \cepm \in L^2((0,T),L^2(\Omega^{\pm}))$ and $\cem \in L^2((0,T),H^1(\oem))$ with $\partial_t \cem \in L^2((0,T),L^2(\oem))$ of the Problem $\eqref{MicroscopicModel}$ (satisfying the weak formulation $\eqref{VE_Omega}$). Additionally, the following a priori estimates are valid:
\begin{align*}
\|\partial_t \cepm \|_{L^2((0,T),L^2(\Omega^{\pm}))} + \|\cepm\|_{L^{\infty}((0,T),L^2(\Omega^{\pm}))} + \big\|\nabla \cepm \big\|_{L^2((0,T),L^2(\Omega^{\pm}))} &\le C,
\\
 \|\partial_t \cem\|_{L^2((0,T),L^2(\oem))} +  \|\cem\|_{L^{\infty}((0,T),L^2(\oem))} +   \big\|\nabla \cem \big\|_{L^2((0,T),L^2(\oem))} &\le C \sqrt{\epsilon}.
\end{align*}
\end{proposition}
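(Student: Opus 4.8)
The plan is to obtain the solution by a Galerkin approximation, derive the a priori bounds by testing the weak formulation with the solution and with its time derivative, pass to the limit by the usual weak/strong compactness, and prove uniqueness by a Lipschitz/Gronwall argument. Nothing here is conceptually new; the only point that needs care is to keep \emph{every} constant independent of $\epsilon$ and to track how the $\foe$-prefactors in the membrane equations turn into the $\sqrt{\epsilon}$-scaling for $\cem$.

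First I would set up the Galerkin scheme directly on the connected Lipschitz domain $\oe$ with a countable basis $(w_k)_{k\in\N}$ of $\hop(\oe)$ having dense span. Since $\hop(\oe)$ consists of genuine $H^1$-functions on $\oe$, continuity of the traces across $\sepm$ is automatic and the transmission conditions need not be imposed on the $w_k$. For $N\in\N$ one looks for $\ce^N(t)=\sum_{k=1}^N a_k^N(t)w_k$ solving the finite-dimensional system obtained by inserting $\phi=w_1,\dots,w_N$ into \eqref{VE_OmegaEps}, with $\ce^N(0)$ the $\hop(\oe)$-orthogonal projection of $c_\epsilon^0$ onto $\mathrm{span}(w_1,\dots,w_N)$. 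By \ref{VoraussetzungenDuenneSchichtReaktionskinetikBulk}--\ref{VoraussetzungenDuenneSchichtReaktionskinetikMembran} the right-hand side is Lipschitz in $\ce^N$, so Picard--Lindel\"of yields a unique local solution, which the estimates below extend to all of $(0,T)$.

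The energy estimates are the heart of the matter. Testing \eqref{VE_OmegaEps} with $\phi=\ce$ (at the Galerkin level $\phi=\ce^N$), using the symmetry and coercivity of $D^\pm$ and $\dem$ from \ref{VoraussetzungenDuenneSchichtDiffusionskoeffizienten}, the linear growth of $f^\pm,g^M$ from \ref{VoraussetzungenDuenneSchichtReaktionskinetikBulk}--\ref{VoraussetzungenDuenneSchichtReaktionskinetikMembran}, Young's inequality, $|\oem|\le C\epsilon$ and Gronwall's lemma, one gets
\begin{align*}
&\sum_{\pm}\Big(\|\cepm\|_{L^{\infty}((0,T),L^2(\oepm))}+\|\nabla\cepm\|_{L^2((0,T),L^2(\oepm))}\Big)\le C,\\
&\|\cem\|_{L^{\infty}((0,T),L^2(\oem))}+\|\nabla\cem\|_{L^2((0,T),L^2(\oem))}\le C\sqrt{\epsilon},
\end{align*}
where the right-hand sides depend on the data only through $\|c_\epsilon^{0,\pm}\|_{L^2(\Omega^\pm)}$ and $\epsilon^{-1/2}\|c_\epsilon^{0,M}\|_{L^2(\oem)}$, which are $\epsilon$-uniformly bounded by \ref{VoraussetzungenDuenneSchichtAnfangsbedingungen}. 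Testing next with $\phi=\partial_t\ce$, using that $\|f^\pm_\epsilon(\cepm)\|_{L^2}$ and $\epsilon^{-1/2}\|g^M_\epsilon(\cem)\|_{L^2}$ are already controlled by the previous step, absorbing the quadratic $\partial_t\ce$-terms and integrating in time, one obtains
\begin{align*}
\sum_{\pm}\|\partial_t\cepm\|_{L^2((0,T),L^2(\oepm))}\le C,\qquad \|\partial_t\cem\|_{L^2((0,T),L^2(\oem))}\le C\sqrt{\epsilon},
\end{align*}
where the $H^1$-bound (with the same $\epsilon$-weighting) on the initial data from \ref{VoraussetzungenDuenneSchichtAnfangsbedingungen} is used, together with the fact that the Galerkin projections of $c_\epsilon^0$ converge to $c_\epsilon^0$ in $\hop(\oe)$ with $\epsilon$-uniform bounds. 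These are exactly the two a priori estimates claimed.

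Since the bounds are uniform in $N$, a subsequence of $\ce^N$ converges weakly-$\ast$ in $L^{\infty}((0,T),L^2(\oe))$, weakly in $L^2((0,T),\hop(\oe))$ and in $H^1((0,T),L^2(\oe))$, and, by the Aubin--Lions--Simon lemma, strongly in $L^2((0,T),L^2(\oe))$; the last convergence (and a.e.\ convergence along a further subsequence, with the continuity and linear growth of $f^\pm,g^M$) permits passing to the limit in the reaction terms. The limit satisfies \eqref{VE_OmegaEps} for $\phi$ in the dense span, hence for all $\phi\in\hop(\oe)$, attains $c_\epsilon^0$ in the usual sense, and, after the (volume-preserving) shift to the variables on $\Omega^\pm$, gives \eqref{VE_Omega} and the asserted regularity. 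Uniqueness follows by testing the equation for the difference of two solutions with that difference and using the uniform Lipschitz continuity of $f^\pm,g^M$ and Gronwall's lemma; the common $\foe$-prefactor on the membrane time-derivative, diffusion and reaction terms cancels and causes no trouble. The main obstacle is bookkeeping rather than analysis: one must carry the $\epsilon$-dependence through every inequality, in particular in the $\partial_t$-estimate (which is what forces the $H^1$- and $\epsilon^{-1/2}$-assumptions on the initial data in \ref{VoraussetzungenDuenneSchichtAnfangsbedingungen}), and one must check that the thin-layer volume factor $|\oem|\le C\epsilon$ combines with the $\foe$-prefactors to produce precisely the $\sqrt{\epsilon}$ on the right-hand side of the second estimate.
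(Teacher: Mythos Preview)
Your approach is correct and is precisely the standard scheme one expects here. The paper itself does not give an independent proof but refers to \cite{GahnEffectiveTransmissionContinuous}, where essentially the same Galerkin/energy-estimate/Gronwall argument is carried out; your outline reproduces that argument and tracks the $\epsilon$-weights correctly, in particular the role of $|\oem|\le C\epsilon$ and of the $H^1$-bound on the initial data in \ref{VoraussetzungenDuenneSchichtAnfangsbedingungen} for the $\partial_t$-estimate.
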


\section{Main results}
\label{SectionMainResults}

In this section we state our main results. We consider approximations of $\ce$ including correctors of first and second order, leading to different orders of convergence with respect to the scaling parameter $\epsilon$. The definitions of the macroscopic solution of zeroth order and  the corrector terms can be found in Section \ref{SectionZerothOrderProblem} and  \ref{SectionAuxiliaryProblems}.

\subsection{Correctors including terms of order $\epsilon$ in the layer}
\label{SectionFirstOrderApproximation}

Let us define the first order approximation $c_{\epsilon,\app,1}:(0,T)\times \oe \rightarrow \R$ of $\ce$ by
\begin{align}
\begin{aligned}\label{DefFirstOrderApprox}
&c_{\epsilon,\app,1}(t,x):=  \begin{cases} c_{\epsilon,\app,1}^+(t,x) := c_0^+(t,x)   \, &\mbox{ for } x \in \Omega^+, 
\\
c_{\epsilon,\app,1}^M(t,x) := c_0^M(t,\x) + \epsilon c_1^M\left(t,\x,\fxe\right) \, &\mbox{ for } x \in \oem, 
\\
c_{\epsilon,\app,1}^- (t,x) :=  c_0^-(t,x)  \, &\mbox{ for } x \in \Omega^-. \end{cases}
\end{aligned}
\end{align}

\begin{remark}\label{RemarkUnstetigkeitApproximationErsterOrdnung}
The first order corrector term $c_1^M$ in the layer in the definition of $c_{\epsilon,\app,1}$ takes into account the oscillations within the thin layer. Adding this corrector leads to a discontinuity of $c_{\epsilon,\app,1}$ across the interfaces $\sepm$. Of course, it is also possible to add an additional first order corrector term $c_1^{\pm,\bl}$ in the bulk-domains $\oepm$ (see the definition of $c_{\epsilon,\app,2}$ below), however, this will not improve the order of convergence. 
\end{remark}

The error between the microscopic solution $\ce$ and the approximation $c_{\epsilon,\app,1}$ is estimated in the following theorem:

\begin{theorem}\label{MainTheoremFirstOrderApproximation}
Let $c_0^{\pm} \in L^2((0,T),H^2(\Omega^{\pm}))$ with $\partial_t c_0^{\pm} \in L^2((0,T),L^2(\Omega^{\pm}))$ and $\nabla_{\x} c_0^{\pm} \in L^{\infty}((0,T)\times \Omega^{\pm})$, and $c_0^M \in L^2((0,T),H^2(\Sigma))$ with $\partial_t c_0^M \in L^2((0,T),L^2(\Sigma))$. Then, the following error estimate is valid 
\begin{align*}
\sum_{\pm}\left\|\cepm - c_{\epsilon,\app,1}^{\pm}\right\|_{L^2((0,T),H^1(\Omega^{\pm}))} + \frac{1}{\sqrt{\epsilon}}\left\| \cem - c_{\epsilon,\app,1}^M\right\|_{L^2((0,T),H^1(\oem))} \le C_1 \sqrt{\epsilon},
\end{align*}
and the constant $C_1 = C_1(c_0^{\pm},c_0^M)>0$ fulfills
\begin{align*}
C_1 \le C \bigg( &1  + \big\| c_0^M\big\|_{L^2((0,T),H^2(\Sigma))} + \big\|\partial_t c_0^M\big\|_{L^2((0,T),L^2(\Sigma))} 
\\
&+ \sum_{\pm} \left[\big\|c_0^{\pm}\big\|_{L^2((0,T),H^2 (\Omega^{\pm}))} + \big\|\partial_t c_0^{\pm} \big\|_{L^2((0,T),L^2(\Omega^{\pm}))} + \big\|\nabla_{\x} c_0^{\pm}\big\|_{L^{\infty}((0,T)\times \Omega^{\pm})}\right]\bigg).
\end{align*}
\end{theorem}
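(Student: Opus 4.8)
The plan is to follow the energy-method strategy outlined in the introduction: apply the microscopic operator to the error $e_\epsilon := \ce - c_{\epsilon,\app,1}$, test the resulting equation with $e_\epsilon$ itself (suitably adapted near the interfaces), and absorb all right-hand side terms using Gronwall's inequality together with the coercivity assumptions \ref{VoraussetzungenDuenneSchichtDiffusionskoeffizienten}. Concretely, I would first write down the equation satisfied by $e_\epsilon^{\pm}$ in $\Omega^{\pm}$ and by $e_\epsilon^M$ in $\oem$. In the bulk domains, since $c_{\epsilon,\app,1}^{\pm} = c_0^{\pm}$ solves (up to lower-order terms coming from the macroscopic/effective problem defined in Section \ref{SectionZerothOrderProblem}) the same reaction-diffusion equation, the residual is controlled by the data norms appearing in $C_1$. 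In the layer, plugging $c_{\epsilon,\app,1}^M = c_0^M(t,\x) + \epsilon c_1^M(t,\x,\tfrac{x}{\epsilon})$ into $\foe\partial_t - \foe\nabla\cdot(\dem\nabla\cdot)$ generates terms of different orders in $\epsilon$; the cell problem defining $c_1^M$ is chosen precisely so that the $O(\epsilon^{-1})$ contribution cancels, leaving a residual of order $\epsilon^0$ in the scaled $\foe$-weighted norm, which after the $\foe$ weight and integration over the $O(\epsilon)$-thick layer contributes at the level $\epsilon^{1/2}$ to the final estimate.

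Second, I would carefully handle the interface terms on $\sepm$. The key difficulty, as flagged in the introduction, is that $c_{\epsilon,\app,1}$ is \emph{discontinuous} across $\sepm$ (Remark \ref{RemarkUnstetigkeitApproximationErsterOrdnung}), so $e_\epsilon$ is not an admissible test function in the weak formulation \eqref{VE_Omega}, and the normal fluxes of the approximation also jump. I would quantify the jump: $c_{\epsilon,\app,1}^M|_{\sepm} - c_{\epsilon,\app,1}^{\pm}|_{\Sigma} = \epsilon c_1^M(t,\x,\tfrac{\x}{\epsilon},\pm 1)$, which is $O(\epsilon)$ in $L^2$ but whose gradient (trace) is only $O(1)$. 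To make the error function continuous I would either subtract a suitable $O(\epsilon)$ lifting/cutoff correction supported in a thin $\epsilon$-neighborhood of $\sepm$ (so that its $H^1$-norm is $O(\epsilon^{1/2})$, consistent with the target), or directly estimate the boundary integrals $\int_{\sepm}(\text{flux jump})\cdot e_\epsilon$ using trace inequalities scaled to the $\epsilon$-layer. This is where the "skew-symmetric matrix" trick mentioned in the introduction enters for the $O(1)$ residual pieces: representing the oscillating-mean-zero solenoidal fields as divergences of bounded skew-symmetric matrices and integrating by parts gains an extra $\epsilon$, and the matrices must be constructed so that the interface boundary terms vanish.

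Third, I would set up the Gronwall argument. Testing with (the continuous version of) $e_\epsilon$, the diffusion terms give $\sum_{\pm} c\|\nabla e_\epsilon^{\pm}\|^2_{L^2(\Omega^{\pm})} + \foe c_0 \|\nabla e_\epsilon^M\|^2_{L^2(\oem)}$; the time-derivative terms give $\tfrac12\tfrac{d}{dt}[\sum_{\pm}\|e_\epsilon^{\pm}\|^2_{L^2(\Omega^{\pm})} + \foe\|e_\epsilon^M\|^2_{L^2(\oem)}]$; the reaction terms are handled by the uniform Lipschitz assumptions \ref{VoraussetzungenDuenneSchichtReaktionskinetikBulk}--\ref{VoraussetzungenDuenneSchichtReaktionskinetikMembran} (absorbing $\|e_\epsilon\|^2$ terms into the left side after Gronwall); and the residual plus interface terms are bounded by $C_1^2\epsilon$ after Young's inequality, using $\epsilon^{-1}\cdot\epsilon^2 = \epsilon$ from the layer scaling. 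The initial data contributes $C^2\epsilon^2$ from \eqref{ErrorEstimateInitialValues}. Integrating in time over $(0,T)$ and applying Gronwall then yields $\sum_{\pm}\|e_\epsilon^{\pm}\|^2_{L^2((0,T),H^1(\Omega^{\pm}))} + \foe\|e_\epsilon^M\|^2_{L^2((0,T),H^1(\oem))} \le C_1^2\epsilon$, i.e. the claimed $\sqrt{\epsilon}$ estimate, with the explicit dependence of $C_1$ on the macroscopic data norms traced through the residual bounds.

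The main obstacle I expect is the precise treatment of the interface terms: making $e_\epsilon$ admissible as a test function while keeping the correction of order $\epsilon^{1/2}$ in $H^1$, and controlling the flux-jump boundary integrals $\int_{(0,T)\times\sepm}(-\dem\nabla c_1^M\cdot\nu)\, e_\epsilon$ whose naive bound is only $O(1)$. Overcoming this requires the adapted skew-symmetric matrix construction so that, after integration by parts, the boundary contributions on $\sepm$ cancel and an extra factor $\epsilon$ is extracted — this step is the technical heart of the argument and the place where the geometry (thin layer coupled to bulk) genuinely differs from the classical periodic or pure-boundary-layer settings of \cite{JikovKozlovOleinik1994, neuss2001boundary}.
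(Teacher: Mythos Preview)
Your strategy — energy method, correct the test function for admissibility, skew-symmetric trick for the oscillating residual, then Gronwall — is the paper's, but two details differ from what you sketch. First, the paper does not use an ad hoc cutoff lifting supported in an $\epsilon$-neighbourhood of $\Sigma$; it adds precisely the bulk boundary-layer corrector $\epsilon c_1^{\pm,\bl}(x,x/\epsilon)$ from the second-order approximation, so the admissible test function is $(\cepm - c_{\epsilon,\app,2}^{\pm},\, \cem - c_{\epsilon,\app,1}^M)$. The extra cross-term this generates, $\epsilon\int_{\Omega^{\pm}} D^{\pm}\nabla(\cepm-c_0^{\pm})\cdot\nabla c_1^{\pm,\bl}(x,x/\epsilon)\,dx$, is controlled by pulling $\partial_{x_j}c_0^{\pm}$ out in $L^{\infty}$ and using $\big\|\nabla_y w_{j,1}^{\pm,\bl}(\cdot/\epsilon)\big\|_{L^2(\Omega^{\pm})}\le C\sqrt{\epsilon}$ from the exponential decay of the boundary layer; this is exactly where the hypothesis $\nabla_{\x}c_0^{\pm}\in L^{\infty}((0,T)\times\Omega^{\pm})$ enters, a point you did not locate. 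Your generic cutoff lifting might also work (and could conceivably avoid that $L^{\infty}$ assumption), but it would not reproduce the theorem as stated. Second, you attach the skew-symmetric matrix device to a ``flux-jump boundary integral'' on $\sepm$, but no such term appears in the paper's weak-form computation. The skew-symmetric tensor of Lemma~\ref{SkewSymmetricTensorOrderOneLayer} is applied to the \emph{interior} layer residual $\Delta_{\epsilon,T^{M,1}}=\foe\int_{\oem}T^{M,1}\left(\fxe\right)\nabla_{\x}c_0^M\cdot\nabla\pem\,dx$, whose naive bound is only $O(\epsilon^{-1/2})\|\nabla\pem\|_{L^2(\oem)}$; the tensor is constructed with $\beta^i_{jn}=0$ on $S^{\pm}$ so that integration by parts simultaneously gains the factor $\epsilon$ and annihilates the $\sepm$-boundary contributions. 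The actual interface mismatch $\int_{\Sigma}D^{\pm}\nabla c_0^{\pm}\cdot\nu^{\pm}(\pepm-\bpem)\,d\sigma$ is handled separately and elementarily via $\|\pepm|_{\Sigma}-\bpem\|_{L^2(\Sigma)}\le C\sqrt{\epsilon}\|\partial_n\pem\|_{L^2(\oem)}$ (Lemma~\ref{ErrorEstimateNormalFlux}).
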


Theorem \ref{MainTheoremFirstOrderApproximation} is a direct consequence of the more general result in Theorem \ref{GeneralErrorEstimateFirstOrderApproximation} in Section \ref{SectionErrorEstimateSecondOrder}.

\subsection{Correctors including terms up to order $\epsilon^2$ in the layer}
\label{SectionSecondOrderApproximation}

For higher order error estimates, we first have to overcome the problem of discontinuity across $\sepm$ of the approximation $c_{\epsilon,\app,1}$ by adding a first order corrector term in the bulk-domains, see Remark \ref{RemarkUnstetigkeitApproximationErsterOrdnung}. However, this leads to an additional normal flux from the bulk-regions into the thin layer. Therefore, we have to add an additional second order corrector (the diffusion in the layer is of order $\epsilon^{-1}$) in the layer. This leads to the following second order approximation:
We define $c_{\epsilon,\app,2}:(0,T)\times \oe \rightarrow \R$ by
\begin{align}
\begin{aligned} \label{DefSecondOrderApprox}
&c_{\epsilon,\app,2}(t,x):=  \begin{cases} c_{\epsilon,\app,2}^+(t,x) \, \mbox{ for } x \in \Omega^+
\\
c_{\epsilon,\app,2}^M(t,x) \, \mbox{ for } x \in \oem 
\\
c_{\epsilon,\app,2}^- (t,x) \, \mbox{ for } x \in \Omega^- \end{cases}
\\
&:= \begin{cases}
c_0^+(t,x) + \epsilon  c_1^{+,\bl}\left(t, x,\fxe\right) &\mbox{ for } x \in \Omega^+,
\\
c_0^M(t,\x) + \epsilon c_1^M\left(t, \x,\fxe\right) + \epsilon^2 c_2^M\left(t,\x,\fxe\right) &\mbox{ for } x \in \oem,
\\
c_0^-(t,x) + \epsilon  c_1^{-,\bl}\left(t, x,\fxe\right)  &\mbox{ for } x \in \Omega^-.
\end{cases}
\end{aligned}
\end{align}


\begin{remark}\label{RemarkUnstetigkeitApproximationZweiterOrdnung}
Again, the approximation $c_{\epsilon,\app,2}$ is not continuous across the interfaces $\sepm$.
\end{remark}

\begin{theorem}\label{MainTheoremSecondOrderApproximation}
Let $c_0^{\pm} \in L^2((0,T),H^2(\Omega^{\pm}))$ with $\partial_t c_0^{\pm} \in L^2((0,T),H^1(\Omega^{\pm}))$, and $c_0^M \in L^2((0,T),H^2(\Sigma))$ with $\partial_t c_0^M \in L^2((0,T),H^1(\Sigma))$.  Then, the following error estimate is valid
\begin{align*}
\sum_{\pm}\left\|\cepm - c_{\epsilon,\app,2}^{\pm}\right\|_{L^2((0,T),H^1(\Omega^{\pm}))} + \frac{1}{\sqrt{\epsilon}}\left\| \cem - c_{\epsilon,\app,2}^M\right\|_{L^2((0,T),H^1(\oem))} \le C_2\epsilon,
\end{align*}
and the constant $C_2 = C_2(c_0^{\pm},c_0^M)>0$ fulfills
\begin{align*}
C_2 \le& C\bigg( 1 + \|c_0^M\|_{L^2((0,T),H^2(\Sigma))} + \|\partial_t c_0^M \|_{L^2((0,T),H^1(\Sigma))} + \|c^{0,M}\|_{H^1(\Sigma)} \\
& \hspace{2em} + \sum_{\pm} \left[ \|c_0^{\pm}\|_{L^2((0,T),H^2(\Omega^{\pm}))} + \|\partial_t c_0^{\pm} \|_{L^2((0,T),H^1(\Omega^{\pm}))} + \|c^{0,\pm}\|_{H^1(\Omega^{\pm})}\right] \bigg).
\end{align*}
\end{theorem}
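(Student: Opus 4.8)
\textbf{Proof plan for Theorem \ref{MainTheoremSecondOrderApproximation}.}

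The plan is to proceed by the energy method applied to the error function $e_\epsilon := \ce - c_{\epsilon,\app,2}$, reduced — as announced in Section \ref{SectionMainResults} — to an application of the general error estimate which we expect to appear as a counterpart of Theorem \ref{GeneralErrorEstimateFirstOrderApproximation} (call it the general second-order estimate). The first step is to insert $c_{\epsilon,\app,2}$ into the weak formulation \eqref{VE_Omega} and compute the residual: apply the microscopic differential operator to each piece of the approximation and subtract the corresponding equation in \eqref{MicroscopicModel}. By construction, the zeroth-order macroscopic solution $c_0^{\pm}$, $c_0^M$ solves the macroscopic problem of Section \ref{SectionZerothOrderProblem}, and the cell-problem correctors $c_1^M$, $c_2^M$ together with the boundary-layer correctors $c_1^{\pm,\bl}$ of Section \ref{SectionAuxiliaryProblems} are chosen precisely so that the leading-order terms in the interior of $\oem$ and $\oepm$ cancel. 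What remains on the right-hand side is a sum of: (i) volume terms carrying an explicit factor $\epsilon$ (from the $\epsilon^2 c_2^M$ term in the layer, whose diffusion is scaled by $\epsilon^{-1}$, and from the $\epsilon c_1^{\pm,\bl}$ terms in the bulk), (ii) oscillatory volume terms of the form $\epsilon^{-1}(\text{div-free field in } y)\cdot\nabla\phi$ which individually are only $O(1)$, (iii) interface terms on $\sepm$ measuring the residual jump in value and in normal flux of the approximation, and (iv) terms coming from the nonlinearities $\fepm$, $\gem$ evaluated at $c_{\epsilon,\app,2}$ versus at $\ce$, and from the initial data. The regularity hypotheses $c_0^{\pm}\in L^2(H^2)$, $\partial_t c_0^{\pm}\in L^2(H^1)$, etc., are exactly what is needed to control all these residual pieces.

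The second step is the treatment of the oscillatory terms of type (ii), which is the technical heart of the argument and the place where the first-order estimate of Theorem \ref{MainTheoremFirstOrderApproximation} loses a power of $\epsilon^{1/2}$ that we now want to recover. Following the strategy sketched in the introduction (and used in \cite{JikovKozlovOleinik1994,neuss2001boundary}), I would write each such solenoidal-in-$y$ vector field as the $y$-divergence of a skew-symmetric matrix field, periodic in the tangential variables; integrating by parts in the fast variable then transfers one derivative and produces the gain of one factor $\epsilon$, turning an $O(1)$ term into $O(\epsilon)$. The subtlety — which the introduction flags as the main obstacle — is that this skew-symmetric potential must be constructed so that the boundary contributions generated on $\sepm$ by the integration by parts either vanish or can themselves be absorbed; this is where the geometry of the thin layer coupled to the bulk forces a problem-adapted choice of the matrix rather than the classical one. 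One then couples these skew-symmetric correctors with the compensating boundary-layer terms $c_1^{\pm,\bl}$ so that the value-jump across $\sepm$ is equilibrated to order $\epsilon$ and the resulting normal-flux jump is in turn equilibrated by $\epsilon^2 c_2^M$; the mismatch left after this two-fold equilibration is $O(\epsilon)$ in the relevant dual norms.

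The third step is the standard energy estimate: take $\phi = e_\epsilon$ (respecting the matching condition $\pepm|_\Sigma = \pem|_{\sepm}$, which holds because both $\ce$ and $c_{\epsilon,\app,2}$ are built to be admissible test functions, or else after adding a harmless cutoff correction), use coercivity of $D^\pm$ and of $D^M$ (Assumption \ref{VoraussetzungenDuenneSchichtDiffusionskoeffizienten}) to bound the elliptic part from below by $\|\nabla e_\epsilon^{\pm}\|_{L^2(\Omega^{\pm})}^2 + \epsilon^{-1}c_0\|\nabla e_\epsilon^M\|_{L^2(\oem)}^2$, absorb the Lipschitz nonlinearity (Assumptions \ref{VoraussetzungenDuenneSchichtReaktionskinetikBulk}, \ref{VoraussetzungenDuenneSchichtReaktionskinetikMembran}) into the left-hand side up to a Grönwall term, and control the trace/interface terms by an $\epsilon$-scaled trace inequality on the thin layer of the form $\|u\|_{L^2(\sepm)}^2 \le C(\epsilon^{-1}\|u\|_{L^2(\oem)}^2 + \epsilon\|\nabla u\|_{L^2(\oem)}^2)$. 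Integrating in time, invoking the $O(\epsilon)$ initial-data bound \eqref{ErrorEstimateInitialValues}, and applying Grönwall then yields
\[
\sum_{\pm}\|e_\epsilon^{\pm}\|_{L^2((0,T),H^1(\Omega^{\pm}))}^2 + \frac1\epsilon\|e_\epsilon^M\|_{L^2((0,T),H^1(\oem))}^2 \le C\epsilon^2,
\]
with $C$ of the stated form, after which a triangle inequality splitting off the (higher-order, i.e.\ $o(\epsilon)$ or $O(\epsilon)$) genuine corrector contributions $\epsilon c_1^{\pm,\bl}$, $\epsilon^2 c_2^M$ gives the theorem. I expect the genuinely hard part to be the construction of the skew-symmetric matrices with the correct boundary behaviour at $\sepm$ and the bookkeeping that shows the doubly-equilibrated interface residual is truly $O(\epsilon)$ in $H^{-1/2}(\Sigma)$-type norms; the energy/Grönwall step and the nonlinearity are routine given the a priori bounds of Proposition \ref{ExistenceAprioriMicroscopicProblem}.
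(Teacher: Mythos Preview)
Your overall strategy matches the paper's: decompose the residual, treat the oscillatory divergence-free terms via skew-symmetric potentials constructed so that interface contributions from bulk and layer cancel (this is exactly Lemmas \ref{SkewSymmetricTensorOrderOneLayer}--\ref{ErrorEstimateDeltaTpmblTMtwo}), and close by an energy/Gr\"onwall argument. The paper packages this as Proposition \ref{ZusammenfassungAbschaetzungFehlertermeZweiterOrdnung} and Theorem \ref{GeneralErrorEstimateSecondOrderApproximation}, after which Theorem \ref{MainTheoremSecondOrderApproximation} is a one-line corollary.

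There is, however, one point you gloss over that the paper treats with care. You assert that $e_\epsilon = \ce - c_{\epsilon,\app,2}$ is an admissible test function because ``both $\ce$ and $c_{\epsilon,\app,2}$ are built to be admissible.'' This is false: the paper states explicitly (Remark \ref{RemarkUnstetigkeitApproximationZweiterOrdnung}) that $c_{\epsilon,\app,2}$ is \emph{not} continuous across $\sepm$. Indeed, $c_{\epsilon,\app,2}^{\pm}|_{\Sigma}$ and $c_{\epsilon,\app,2}^M|_{\sepm}$ differ by the trace of $\epsilon^2 c_2^M$, since $c_1^{\pm,\bl}$ was designed to match $c_1^M$ on $\sepm$ but nothing matches the newly added $c_2^M$. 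Your parenthetical ``or else after adding a harmless cutoff correction'' understates what is needed: the paper introduces a further bulk corrector
\[
c_2^{\pm}(x,y) = \sum_{j=1}^{n-1} \partial_{x_j} c_0^{\pm}(x)\, w_{j,2}^{\pm}(y),\qquad w_{j,2}^{\pm}(y):=\psi(y_n)\,w_{j,2}^M(\bar y,\pm1),
\]
and takes $\pepm = \cepm - c_{\epsilon,\app,2}^{\pm} - \epsilon^2 c_2^{\pm}(\cdot,\cdot/\epsilon)$ as the actual test function. Verifying that this extra $\epsilon^2$-term does not spoil the $O(\epsilon)$ estimate requires the regularity $w_{j,2}^M\in W^{2,p}(Z)$ from Lemma \ref{CellProblemsSecondOrderLayerExistence} (to bound $\nabla_y w_{j,2}^{\pm}$ in $L^\infty$), and this is checked in Lemma \ref{EstimatesCorrectorSecondOrder}. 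The fix is not deep once identified, but it is a specific trace-extension construction rather than a generic cutoff, and it should be named.

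A second, minor difference: the paper does not handle the interface residual via a scaled trace inequality as you propose, but rather by testing the macroscopic equation with the pair $(\pepm,\bpem)$, where $\bpem$ is the vertical average of $\pem$ over the layer; since $\bpem\neq\pepm|_\Sigma$ this is not an admissible macroscopic test pair, and the resulting normal-flux defect $\int_\Sigma D^{\pm}\nabla c_0^{\pm}\cdot\nu^{\pm}(\pepm-\bpem)\,d\sigma$ is estimated directly (Lemma \ref{ErrorEstimateNormalFlux}). Your trace-inequality route would likely also close, but the bookkeeping is organized differently.
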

We see that the second order approximation $c_{\epsilon,\app,2}$ leads to a better error estimate with respect to the scaling parameter $\epsilon$ than $c_{\epsilon,\app,1}$. However, from the numerical point of view, this has to be paid by solving the cell and boundary layer problems for the correctors $c_2^M$ and $c_1^{\pm,\bl}$, see Section \ref{SectionAuxiliaryProblems}.

\section{The zeroth order macroscopic model }
\label{SectionZerothOrderProblem}

In this section we formulate the macroscopic problem of zeroth order. This was derived rigorously for a similar model in \cite{GahnEffectiveTransmissionContinuous} using the method of two-scale convergence and the unfolding operator. Here, we also take into account oscillations in the reactive term $\fepm$ and consider periodic boundary conditions on the lateral boundary instead of a Neumann-zero boundary condition. However, the results from \cite{GahnEffectiveTransmissionContinuous} still hold for our situation.

The macroscopic solution is defined in the following way: Let the triple $(c_0^+,c_0^M,c_0^-)$ with 
\begin{align*}
c_0^{\pm} &\in L^2((0,T),\hop(\Omega^{\pm})) \cap H^1((0,T),L^2(\Omega^{\pm})),
\\
c_0^M &\in L^2((0,T), \hop (\Sigma)) \cap H^1((0,T),L^2(\Sigma)),
\end{align*}
be the unique weak solution of the following transmission problem:
\begin{align}
\begin{aligned}\label{MacroscopicProblemZeroOrder}
\partial_t c_0^{\pm} - \nabla \cdot \left(D^{\pm} \nabla c_0^{\pm} \right) &= \int_{Y^n}f^{\pm}(t,y,c_0^{\pm}) dy &\mbox{ in }& (0,T)\times \Omega^{\pm},
\\
c_0^+ &= c_0^- = c_0^M &\mbox{ on }& (0,T)\times \Sigma,
\\
[\![D^{\pm} \nabla c_0^{\pm} \cdot \nu ]\!] &= |Z|\partial_t c_0^M - |Z|\nabla_{\x} \cdot \left(D^{M,\ast}\nabla_{\x} c_0^M\right) 
\\
&\hspace{3em} - \int_Z g^M(t,y,c_0^M) dy &\mbox{ on }& (0,T)\times \Sigma,
\\
-D^{\pm } \nabla c_0^{\pm} \cdot \nu^{\pm} &= 0 &\mbox{ on }& (0,T)\times \Sigma \times \{\pm H\},
\\
c_0^{\pm}, \, c_0^M &\mbox{ are } \Sigma\mbox{-periodic},
\\
c_0^{\pm}(0) &= c^{0,\pm} &\mbox{ in }& \Omega^{\pm},
\\
c_0^M(0) &= c^{0,M} &\mbox{ in }& \Sigma,
\end{aligned}
\end{align}
with $[\![D^{\pm} \nabla c_0^{\pm} \cdot \nu ]\!]:= -\big(D^{+} \nabla c_0^+ \cdot \nu^+ + D^{-} \nabla c_0^- \cdot \nu^-\big)$.
Here, $\nu^{\pm}$ denotes the outer unit normal  on $\partial \Omega^{\pm}$, and the homogenized diffusion coefficient   $D^{M,\ast}$ is defined by
\begin{align*}
D^{M,\ast}_{kl} &:= \frac{1}{|Z|}\int_Z D^M(y) \big(\nabla w_{k,1}^M + e_k \big) \cdot \big(\nabla w_{l,1}^M + e_l \big) dy \quad \mbox{for } k,l= 1,\ldots,n-1, 
\end{align*}
 where $w_{k,1}^M$ are the solutions of the cell problems  $\eqref{CellProblemFirstOrderLayer}$ in Section \ref{SectionAuxiliaryProblems}. The variational formulation for Problem $\eqref{MacroscopicProblemZeroOrder}$ is the following one: For all $(\phi^+,\phi^M,\phi^-) \in H^1(\Omega^+)\times H^1(\Sigma)\times H^1(\Omega^-)$ with $\phi^{\pm}|_{\Sigma} = \phi^M$ it holds almost everywhere in $(0,T)$
\begin{align}
\begin{aligned}
\label{VE_MacrEquaZero}
\sum_{\pm} &\left[ \int_{\Omega^{\pm}} \partial_t c_0^{\pm} \phi^{\pm} dx + \int_{\Omega^{\pm}} D^{\pm} \nabla c_0^{\pm} \cdot \nabla \phi^{\pm} dx \right]
\\
&\hspace{4em}+ |Z|\int_{\Sigma} \partial_t c_0^M \phi^M d\x + |Z|\int_{\Sigma} D^{M,\ast} \nabla_{\x} c_0^M \cdot \nabla_{\x} \phi^M d\x 
\\
&= \int_{\Sigma}\int_Z g^M(t,y,c_0^M) \phi^M dy d\x +  \sum_{\pm} \int_{\Omega^{\pm}}\int_{Y^n} f^{\pm}(t,y,c_0^{\pm}) \phi^{\pm} dy dx.
\end{aligned}
\end{align}

In the following we will prove regularity results for the macroscopic solution $(c_0^+, c_0^M , c_0^-)$ under additional assumptions on the data. Hence, let the Assumptions \ref{VoraussetzungenDuenneSchichtDiffusionskoeffizienten} - \ref{VoraussetzungenDuenneSchichtAnfangsbedingungen} be valid and additionally it holds that
\begin{enumerate}
[label = (A\arabic*)']
\setcounter{enumi}{1}
\item \label{ZusatzbedingungFplusminus} The function $f^{\pm}$ is differentiable with respect to $t  $  with $\partial_t f^{\pm} \in L^{\infty}((0,T)\times Y^n \times \R)$.

\item \label{ZusatzbedingungGM}The function  $g^M$ is differentiable with respect to $t $ with  $\partial_t g^M \in L^{\infty}((0,T)\times Z \times \R)$. 

\item \label{ZusatzbedingungAW} There exists a constant $M_0\geq 0$, such that $\|\nabla_{\x} c^{0,\pm}\|_{L^{\infty}(\Omega^{\pm})} \le M_0$ (not for the $n$-th derivative) and $\|\nabla_{\x} c^{0,M}\|_{L^{\infty}(\Sigma)}\le M_0$.

\end{enumerate}

Then we obtain the following regularity result for the macroscopic solution, which is sufficient for the assumptions in Theorem \ref{MainTheoremFirstOrderApproximation} and \ref{MainTheoremSecondOrderApproximation} to hold:

\begin{proposition}\label{RegularityResultsMacroscopicSolution}
Let the conditions \ref{VoraussetzungenDuenneSchichtDiffusionskoeffizienten} - \ref{VoraussetzungenDuenneSchichtAnfangsbedingungen}  be fulfilled. Then the triple $(c_0^+ , c_0^M , c_0^- )$ has the following regularity property:
\begin{align*}
c_0^{\pm}  \in L^2((0,T),H^2(\Omega^{\pm})), \quad 
c_0^M  \in L^2((0,T),H^2(\Sigma)).
\end{align*}
If we additionally assume  \ref{ZusatzbedingungFplusminus} - \ref{ZusatzbedingungAW}, then we obtain
\begin{align*}
c_0^{\pm} &\in  H^1((0,T),H^1(\Omega)), \quad 
\nabla_{\x} c_0^{\pm} \in L^{\infty}((0,T)\times \Omega^{\pm}),\\
c_0^M &\in H^1((0,T),H^1(\Sigma)), \quad 
\nabla_{\x} c_0^M \in L^{\infty}((0,T)\times \Sigma).
\end{align*}
\end{proposition}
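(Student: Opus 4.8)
The plan is to prove the regularity statement in two stages, mirroring the structure of the proposition: first the $L^2\big((0,T),H^2\big)$-regularity under the basic assumptions \ref{VoraussetzungenDuenneSchichtDiffusionskoeffizienten}--\ref{VoraussetzungenDuenneSchichtAnfangsbedingungen}, and then the improved time- and Lipschitz-regularity under the additional hypotheses \ref{ZusatzbedingungFplusminus}--\ref{ZusatzbedingungAW}. Throughout I would work with the variational formulation \eqref{VE_MacrEquaZero}, exploiting that the coupled system decouples into two bulk parabolic equations on $\Omega^{\pm}$ and a surface parabolic equation on $\Sigma$, glued together by the continuity $c_0^{\pm}|_{\Sigma}=c_0^M$ and the flux-jump condition. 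A convenient device is to use the lateral ($\Sigma$-)directions $\x=(x_1,\dots,x_{n-1})$ separately from the normal direction $x_n$, since $\Sigma$ is a flat torus: difference quotients in the $\x$-directions are admissible test-function operations and commute with all the coefficients (which only depend on $\fxe$ or are constant), so they cause no boundary terms.

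For the first stage I would first differentiate \eqref{VE_MacrEquaZero} in time, or equivalently use the standard parabolic bootstrap: Proposition \ref{ExistenceAprioriMicroscopicProblem}-type energy estimates combined with the Assumptions on $f^{\pm},g^M$ (linear growth, Lipschitz) and on the initial data give $\partial_t c_0^{\pm}\in L^2((0,T),L^2(\Omega^{\pm}))$ and $\partial_t c_0^M\in L^2((0,T),L^2(\Sigma))$; testing with $\partial_t c_0$ and using the $H^1$-control of the initial values yields this. Then, for a.e.\ $t$, the elliptic problem satisfied by $c_0^{\pm}(t,\cdot)$ on $\Omega^{\pm}$ is $-\nabla\cdot(D^{\pm}\nabla c_0^{\pm}) = F^{\pm}\in L^2(\Omega^{\pm})$ with Neumann data $0$ on $\Sigma\times\{\pm H\}$, periodic lateral conditions, and on $\Sigma$ the Dirichlet datum $c_0^M(t,\cdot)$, which — being the trace of an $H^1$ function and, via the surface equation, controlled in $H^1(\Sigma)$ — I would show lies in $H^{3/2}(\Sigma)$. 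Since $D^{\pm}$ is constant and the domain is a flat box (no corner singularities because the lateral conditions are periodic), elliptic regularity gives $c_0^{\pm}(t,\cdot)\in H^2(\Omega^{\pm})$ with the expected estimate; integrating the square over $(0,T)$ gives the $L^2((0,T),H^2)$ bound. For $c_0^M$ itself, the surface equation reads $|Z|\partial_t c_0^M - |Z|\nabla_{\x}\cdot(D^{M,\ast}\nabla_{\x}c_0^M) = \int_Z g^M - [\![D^{\pm}\nabla c_0^{\pm}\cdot\nu]\!]$; the right side is now in $L^2((0,T),L^2(\Sigma))$ (the flux jump is controlled once $c_0^{\pm}\in L^2((0,T),H^2)$, by the trace theorem $H^1(\Omega^{\pm})\ni\nabla c_0^{\pm}$ restricted to $\Sigma$ lies in $H^{1/2}\subset L^2$), $D^{M,\ast}$ is constant symmetric positive-definite, and $\Sigma$ is a closed manifold, so standard elliptic regularity on the torus gives $c_0^M\in L^2((0,T),H^2(\Sigma))$.

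For the second stage, assuming \ref{ZusatzbedingungFplusminus}--\ref{ZusatzbedingungAW}, I would differentiate the whole coupled system formally in time and set $d^{\pm}:=\partial_t c_0^{\pm}$, $d^M:=\partial_t c_0^M$; the pair $(d^{\pm},d^M)$ satisfies the same type of linear coupled transmission problem, now with right-hand sides $\partial_t f^{\pm} + \partial_z f^{\pm}\,\partial_t c_0^{\pm}$ and $\partial_t g^M + \partial_z g^M\,\partial_t c_0^M$, which are in $L^2((0,T),L^2)$ thanks to the new boundedness of $\partial_t f^{\pm},\partial_t g^M$, the Lipschitz bounds on $\partial_z$, and the first-stage bounds on $\partial_t c_0$. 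The compatible initial datum for $d$ is obtained from the equation at $t=0$ using $c^{0,\pm}\in H^1$, $c^{0,M}\in H^1$ and the compatibility $c^{0,\pm}|_{\Sigma}=c^{0,M}$ of \ref{ZusatzbedingungAW}, giving $d(0)\in(H^1)'$-type data — enough for the energy estimate to yield $\partial_t c_0^{\pm}\in L^2((0,T),H^1(\Omega^{\pm}))$ and likewise on $\Sigma$, i.e.\ $c_0\in H^1((0,T),H^1)$. Finally, for the Lipschitz-in-$\x$ bound I would apply difference quotients $\tau_h^j$ in the lateral directions $j=1,\dots,n-1$ to the coupled system: since all coefficients are either constant or functions of $\fxe$, and since differentiating $D^M(\fxe)$ in $\x$ costs a factor $\epsilon^{-1}$ which does not appear here because these are the \emph{macroscopic} coefficients $D^{\pm},D^{M,\ast}$ (constant!), the tangential derivatives $\partial_{x_j} c_0$ satisfy again a coupled parabolic system with $L^2$ data, hence $\partial_{x_j}c_0^{\pm}\in L^2((0,T),H^2(\Omega^{\pm}))$ etc.; combined with $\partial_t\partial_{x_j}c_0\in L^2((0,T),L^2)$ and a parabolic embedding / Aubin--Lions argument in dimension $\le n$, together with the bound $M_0$ on $\nabla_{\x}$ of the initial data, one upgrades to $\nabla_{\x}c_0^{\pm}\in L^{\infty}((0,T)\times\Omega^{\pm})$ and $\nabla_{\x}c_0^M\in L^{\infty}((0,T)\times\Sigma)$.

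The main obstacle, and where I would spend the most care, is the coupling across $\Sigma$: the bulk regularity feeds the surface equation through the flux jump $[\![D^{\pm}\nabla c_0^{\pm}\cdot\nu]\!]$, and the surface regularity feeds the bulk elliptic problems through the Dirichlet trace $c_0^M$, so the two bootstraps must be run \emph{simultaneously} rather than sequentially — one has to set up a fixed-point or simultaneous-estimate scheme on the triple $(c_0^+,c_0^M,c_0^-)$, proving the combined a priori estimate $\|c_0^{\pm}\|_{H^2(\Omega^{\pm})}+\|c_0^M\|_{H^2(\Sigma)}\le C(\|F^{\pm}\|_{L^2}+\|G^M\|_{L^2}+\dots)$ in one stroke, with the trace inequalities $\|c_0^M\|_{H^{3/2}(\Sigma)}\lesssim\|c_0^{\pm}\|_{H^2(\Omega^{\pm})}$ and $\|\nabla c_0^{\pm}\cdot\nu\|_{L^2(\Sigma)}\lesssim\|c_0^{\pm}\|_{H^2(\Omega^{\pm})}$ used to close the loop and absorb the coupling terms (possible since the loss of regularity in each direction is strictly less than a full derivative). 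The rest is routine linear parabolic theory on a flat box with periodic lateral conditions and on a torus, where no domain-geometry difficulties arise, and a careful but standard verification that the additional assumptions \ref{ZusatzbedingungFplusminus}--\ref{ZusatzbedingungAW} provide exactly the data regularity needed at each level of the bootstrap.
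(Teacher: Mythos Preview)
Your approach to the $H^2$-regularity and to $c_0\in H^1((0,T),H^1)$ is broadly correct, though the paper takes a more direct route for the former: rather than decoupling into an elliptic boundary-value problem on $\Omega^{\pm}$ with Dirichlet datum $c_0^M\in H^{3/2}(\Sigma)$ (which, as you yourself note, creates a circular dependence to be resolved by a simultaneous estimate), the paper simply tests the \emph{coupled} variational formulation \eqref{VE_MacrEquaZero} with tangential difference quotients $\partial_h^i c_0$ for $i=1,\dots,n-1$. This immediately yields $\partial_{x_i}c_0^{\pm}\in L^2((0,T),H^1(\Omega^{\pm}))$ and $\partial_{x_i}c_0^M\in L^2((0,T),H^1(\Sigma))$ in one stroke, and the remaining normal second derivative $\partial_{x_n}^2 c_0^{\pm}$ is recovered from the equation. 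This is precisely the ``simultaneous-estimate scheme'' you are looking for, implemented in the simplest possible way.

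There is, however, a genuine gap in your argument for $\nabla_{\x}c_0\in L^{\infty}$. Even if you obtain $\partial_{x_j}c_0^{\pm}\in L^2((0,T),H^2(\Omega^{\pm}))\cap H^1((0,T),L^2(\Omega^{\pm}))$ from the differentiated system, invoking ``parabolic embedding / Aubin--Lions'' does \emph{not} give $L^{\infty}((0,T)\times\Omega^{\pm})$ in general dimension $n$: the best you get from those spaces is $C([0,T],H^1(\Omega^{\pm}))$, and $H^1$ fails to embed into $L^{\infty}$ for $n\ge2$. Iterating the bootstrap to gain further spatial regularity would require higher $z$-derivatives of $f^{\pm},g^M$, which are not assumed. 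The paper instead uses the De~Giorgi--Stampacchia truncation method: the tangential derivative $v_i=\partial_{x_i}c_0$ solves a linear coupled transmission problem with bounded zeroth-order coefficients $\partial_z f^{\pm}(c_0^{\pm})$, $\partial_z g^M(c_0^M)$, and testing the coupled weak formulation with $((v_i^{\pm}-M)_+,(v_i^M-M)_+)$ for $M\ge M_0$ (this is where the $L^{\infty}$-bound on $\nabla_{\x}$ of the initial data from \ref{ZusatzbedingungAW} enters) yields a level-set estimate. An adapted version of Lady\v{z}enskaja's $L^{\infty}$-criterion for the coupled bulk--surface geometry (Lemma~\ref{VerallgemeinerungLady}) then gives $v_i\in L^{\infty}$. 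This is the standard route to $L^{\infty}$-bounds for weak solutions of parabolic equations in arbitrary dimension and cannot be replaced by linear Sobolev embeddings.
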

\begin{proof}
The proof can be found in Section \ref{SectionRegularity} in the Appendix.
\end{proof}

\begin{remark}
We emphasize that the proofs of the main results from Section \ref{SectionMainResults} hold under the Assumptions  \ref{VoraussetzungenDuenneSchichtDiffusionskoeffizienten} - \ref{VoraussetzungenDuenneSchichtAnfangsbedingungen}, and the conditions stated in Theorem \ref{MainTheoremFirstOrderApproximation} and \ref{MainTheoremSecondOrderApproximation}. The assumptions \ref{ZusatzbedingungFplusminus} - \ref{ZusatzbedingungAW}   give   sufficient conditions for which the assumptions from Theorem \ref{MainTheoremFirstOrderApproximation} and \ref{MainTheoremSecondOrderApproximation} are fulfilled, but are far away from being optimal.
\end{remark}

\section{Corrector terms}
\label{SectionAuxiliaryProblems}

%
%
%

In this section we define the corrector terms $c_1^{\pm,\bl}$, $c_1^M$, and $c_2^M$ used in the first and second order approximations $c_{\epsilon,\app,1}$ and $c_{\epsilon,\app,2}$ in Section \ref{SectionMainResults}, and investigate their regularity properties. The correctors are defined via the derivatives of the macroscopic solution $c_0$ multiplied by solutions of appropriate cell respectively boundary layer problems, see $\eqref{CellProblemFirstOrderLayer} $ - $ \eqref{CellProblemSecondOrderLayerM}$.
\vspace{1em}

\noindent\textbf{Cell problems of first order for the thin layer $\oem$: }

For $j=1,\ldots,n-1$ the function $w_{j,1}^M \in H_{\#}^1(Z)/\R$ solves the following cell problem:
\begin{align}
\begin{aligned}
\label{CellProblemFirstOrderLayer}
-\nabla_y \cdot \left( D^M \big[\nabla_y w_{j,1}^M + e_j\big] \right) &= 0 \quad \mbox{ in }  Z,
\\
-D^M \big[\nabla_y w_{j,1}^M + e_j \big] \cdot \nu &= 0 \quad \mbox{ on } S^{\pm},
\\
w_{j,1}^M \mbox{ is } Y^{n-1}\mbox{-periodic}, \, &\int_Z w_{j,1}^M dy = 0.
\end{aligned}
\end{align}

\begin{lemma}\label{CellProblemFirstOrderLayerExistence}
For every $j=1,\ldots,n-1$  there exists a unique solution $w_{j,1}^M \in W^{2,p}(Z)$ of Problem $\eqref{CellProblemFirstOrderLayer}$ for $p\in (1,\infty)$ arbitrary large. Especially, we have  
\begin{align*}
\|w_{j,1}^M\|_{C^1(\overline{Z})} \le C.
\end{align*}
\end{lemma}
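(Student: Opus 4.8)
The plan is to proceed in the two classical steps: first, well-posedness of a weak solution in the quotient space $H^1_{\#}(Z)/\R$ via the Lax--Milgram lemma; second, a regularity bootstrap exploiting the Lipschitz continuity of $D^M$ to reach $W^{2,p}(Z)$ for every finite $p$, from which the $C^1(\overline Z)$ bound follows by Sobolev embedding.

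\emph{Step 1 (well-posedness).} The weak formulation of $\eqref{CellProblemFirstOrderLayer}$ is: find $w_{j,1}^M \in H^1_{\#}(Z)$ with $\int_Z w_{j,1}^M\,dy = 0$ such that
\begin{align*}
\int_Z D^M(y)\big(\nabla_y w_{j,1}^M + e_j\big)\cdot \nabla_y \phi\, dy = 0 \qquad \text{for all } \phi \in H^1_{\#}(Z),
\end{align*}
the homogeneous conormal condition on $S^{\pm}$ and the lateral periodicity being encoded in the test space (constants are admissible and give $0=0$, which is exactly the natural compatibility condition). On the closed subspace $V := \{\phi \in H^1_{\#}(Z) : \int_Z \phi\,dy = 0\}$ the Poincaré--Wirtinger inequality holds, so by Assumption \ref{VoraussetzungenDuenneSchichtDiffusionskoeffizienten} the bilinear form $a(u,\phi) := \int_Z D^M \nabla_y u \cdot \nabla_y \phi\,dy$ is bounded and coercive on $V$, while $\phi \mapsto -\int_Z D^M e_j \cdot \nabla_y \phi\,dy$ is a bounded linear functional on $V$. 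Lax--Milgram yields a unique $w_{j,1}^M \in V$; testing with an arbitrary $\phi \in H^1_{\#}(Z)$ after subtracting its mean recovers the full weak formulation, so the solution is unique in $H^1_{\#}(Z)/\R$.

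\emph{Step 2 (regularity and the $C^1$ bound).} Since the lateral faces carry periodic conditions, we may regard $Z$ as the cylinder $\mathbb{T}^{n-1}\times(-1,1)$ over the flat torus; its boundary then reduces to the two smooth flat pieces $S^{\pm}$, with no edges or corners, and $w_{j,1}^M$ solves there the divergence-form equation $-\nabla_y\cdot(D^M\nabla_y w_{j,1}^M) = \nabla_y\cdot(D^M e_j)$ with homogeneous conormal data on $S^{\pm}$. Because $D^M \in C^{0,1}_{\#}(\overline Z)$ we have $\nabla_y D^M \in L^{\infty}$, so the right-hand side $\nabla_y\cdot(D^M e_j)$ belongs to $L^{\infty}(Z)$. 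The difference-quotient method (tangential differences along $S^{\pm}$ and full differences in the interior and in the periodic lateral directions) gives $w_{j,1}^M \in H^2(Z)$ with the corresponding estimate, the normal second derivative being recovered from the non-divergence form $-D^M_{ij}\partial_{ij} w_{j,1}^M = (\partial_i D^M_{ij})(1+\partial_j w_{j,1}^M)\in L^2(Z)$. One then bootstraps: $w_{j,1}^M\in H^2$ gives $\nabla_y w_{j,1}^M\in H^1\hookrightarrow L^{q}$ for some $q>2$; inserting this into the Calderón--Zygmund $W^{2,p}$ estimate for the non-divergence-form equation with continuous (indeed Lipschitz) leading coefficients and flat Neumann data on $S^{\pm}$ gives $w_{j,1}^M\in W^{2,q}$, hence $\nabla_y w_{j,1}^M\in W^{1,q}\hookrightarrow L^{q'}$ with strictly larger exponent, and after finitely many iterations $\nabla_y w_{j,1}^M\in L^p$ for every $p<\infty$, so $w_{j,1}^M\in W^{2,p}(Z)$ for all $p\in(1,\infty)$. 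Choosing $p>n$ and using $W^{2,p}(Z)\hookrightarrow C^{1}(\overline Z)$ gives $\|w_{j,1}^M\|_{C^1(\overline Z)}\le C$, with $C$ depending only on $n$, $c_0$ and $\|D^M\|_{C^{0,1}(\overline Z)}$.

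\emph{Main obstacle.} The only genuinely delicate point is boundary regularity, i.e.\ propagating the interior estimates up to $S^{\pm}$ and, a priori, across the edges where $S^{\pm}$ meets the lateral faces. This is dealt with cleanly by using periodicity to pass to the cylinder over the torus, which eliminates the lateral boundary (and hence the edges) and leaves only the smooth flat Neumann pieces $S^{\pm}$, for which difference quotients and the flat-boundary Calderón--Zygmund estimates apply directly; everything else is routine elliptic theory for equations with Lipschitz coefficients.
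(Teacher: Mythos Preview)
Your proof is correct and follows essentially the same route as the paper, which simply invokes the $L^p$-theory for the Neumann problem (citing Grisvard) and the Sobolev embedding $W^{2,p}(Z)\hookrightarrow C^1(\overline Z)$ for $p>n$. You supply considerably more detail---the Lax--Milgram argument, the torus reduction to eliminate the lateral edges, and the explicit $H^2\to W^{2,p}$ bootstrap---whereas the paper treats all of this as standard and absorbs it into the citation; but the underlying mechanism is the same.
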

\begin{proof}
This follows from the $L^p$-theory for the Neumann-problem for elliptic equations, see \cite[Chapter 2]{GrisvardEllipticProblems}. The inequality follows from the Sobolev embedding theorem for $p>n$.
\end{proof}
Now, we define the first order corrector $c_1^M$ in the thin layer via
\begin{align}\label{DefFirstOrderCorrectorLayer}
c_1^M(t,\x,y) &:= \sum_{j=1}^{n-1} \partial_{x_j} c_0^M(t,\x) w_{j,1}^M(y) &\mbox{ in }& (0,T)\times \Sigma \times Z.
\end{align}
By adding $\epsilon c_1^M\left(t,\x,\fxe\right)$  to the macroscopic solution $c_0^M$ in the thin layer, we take into account the oscillations in the layer and can prove error estimates for  the gradients in  $L^2$, see Theorem \ref{MainTheoremFirstOrderApproximation}.

\vspace{1em}
\noindent\textbf{Boundary layer corrector for the bulk-domains $\Omega^{\pm}$:}

Using the corrector $c_{\epsilon,\app,1}$, we obtain an error estimate of order $\epsilon^{\frac12}$, see Theorem \ref{MainTheoremFirstOrderApproximation}. To obtain a better error estimate, we add further corrector terms to the approximation $c_{\epsilon,\app,1}$. Firstly, we add the corrector $c_1^{\pm,\bl}$ to the macroscopic solution in the bulk domains, which eliminates the discontinuity across the interfaces $\sepm$ of the approximation $c_{\epsilon,\app,1}$.

Let us define the infinite stripes $Y^{\pm}$ and their interface $Y^0$ by 
\begin{align*}
Y^+ &:= Y^{n-1} \times (0,\infty), \\
Y^- &:= Y^{n-1} \times (-\infty,0), \\
Y^0 &:= Y^{n-1} \times \{0\}.
\end{align*}
For fixed $\omega >0$, we  define 
\begin{align*}
W_{\omega,\#}\big(Y^+\big):= \left\{ u \in \hop\big(Y^{n-1}\times (0,R)\big) \mbox{ for every } R>0; \, e^{\omega y_n} \nabla u \in L^2\big(Y^+\big)\right\},
\end{align*} 
and in the same way we define the space $W_{\omega,\#}\big(Y^-\big)$.
For $j=1,\ldots,n-1$, the function $w_{j,1}^{\pm,\bl} \in W_{\omega,\#}\big(Y^{\pm}\big)$ solves the following boundary layer problem
\begin{align}
\begin{aligned}
\label{BoundaryLayerBulk}
-\nabla_y \cdot \left(D^{\pm}  \nabla w_{j,1}^{\pm,\bl}\right)  &= 0 &\mbox{ in }& Y^{\pm},
\\
w_{j,1}^{\pm,\bl}(\y,0) &= w_{j,1}^M(\y,1) &\mbox{ on }& Y^0 ,
\\
w_{j,1}^{\pm,\bl} \mbox{ is } Y^{n-1}\mbox{-periodic},
\\
\nabla w_{j,1}^{\pm,\bl} \mbox{ decreases } &\mbox{exponentially for } y_n \to \pm \infty.
\end{aligned}
\end{align} 

\begin{lemma}\label{BoundaryLayerBulkExistence}
For every $j=1,\ldots,n-1$, there exists a unique solution $w_{j,1}^{\pm,bl} \in W_{\omega,\#}(Y^{\pm})$ for a  suitable $\omega>0$. Additionally, it holds that $w_{j,1}^{\pm,\bl} \in W^{2,p}_{\mathrm{loc}}(\R^n_{\pm}) \cap C^0\big(\overline{\R^n_{\pm}}\big)$ with $\R^n_{\pm} := \left\{ x \in \R^n\, : \, \pm x_n >0 \right\}$ for arbitrary large $p\in (1,\infty)$. Especially, we have for a constant $C>0$
\begin{align*}
\big\|w_{j,1}^{\pm,\bl}\big\|_{W^{1,\infty}(\R^n_{\pm})} \le C.
\end{align*}
\end{lemma}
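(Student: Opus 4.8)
The plan is to solve the boundary layer problem \eqref{BoundaryLayerBulk} by first reducing to a homogeneous boundary condition and then applying Lax--Milgram in the weighted space $W_{\omega,\#}(Y^\pm)$, followed by elliptic regularity and exponential-decay estimates. I work only with $Y^+$; the case of $Y^-$ is symmetric. Fix $j$ and write $g := w_{j,1}^M(\cdot,1) \in C^1(\overline{Y^{n-1}})$ by Lemma \ref{CellProblemFirstOrderLayerExistence}. Choose a smooth $Y^{n-1}$-periodic extension $G(\y,y_n)$ of $g$ into $Y^+$ with compact support in $y_n$ (e.g. $G(\y,y_n) = \chi(y_n)\widetilde g(\y)$ with $\widetilde g$ a harmonic or merely smooth periodic extension and $\chi$ a cutoff equal to $1$ near $y_n=0$ and vanishing for $y_n\ge 1$); then $G \in H^1_{\#}(Y^{n-1}\times(0,R))$ for every $R$, $G$ has bounded support, so trivially $e^{\omega y_n}\nabla G \in L^2(Y^+)$ for every $\omega>0$, and $\nabla\cdot(D^+\nabla G) \in L^2(Y^+)$ with bounded support. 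Seek $w_{j,1}^{+,\bl} = G + v$ with $v \in W_{\omega,\#}^0(Y^+) := \{v \in W_{\omega,\#}(Y^+) : v|_{Y^0}=0\}$ solving $-\nabla\cdot(D^+\nabla v) = \nabla\cdot(D^+\nabla G)$ weakly.

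For the existence of $v$, I would first establish a Poincaré-type inequality on $W_{\omega,\#}^0(Y^+)$: since $v$ vanishes on $Y^0$ and is $Y^{n-1}$-periodic, one has $\|v\|_{L^2(Y^{n-1}\times(0,R))} \le C\,\|\nabla v\|_{L^2(Y^{n-1}\times(0,R))}$ with $C$ independent of $R$ (integrate $|v(\y,y_n)|^2 = |\int_0^{y_n}\partial_{y_n}v|^2$ over the strip and use Fubini), hence $W_{\omega,\#}^0(Y^+)$ equipped with $\|\nabla v\|_{L^2(Y^+)}$ is a Hilbert space and is continuously embedded in $H^1$ of every bounded sub-strip. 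The bilinear form $a(v,\phi) = \int_{Y^+} D^+\nabla v\cdot\nabla\phi$ is bounded and, by positive-definiteness of $D^+$ (Assumption \ref{VoraussetzungenDuenneSchichtDiffusionskoeffizienten}), coercive on this space; the right-hand side $\phi \mapsto -\int_{Y^+}D^+\nabla G\cdot\nabla\phi$ is a bounded functional because $\nabla G$ has bounded support and $\phi\in H^1$ there with the Poincaré control. Lax--Milgram then gives a unique $v$, and thus a unique weak solution $w_{j,1}^{+,\bl}\in W_{\omega,\#}(Y^+)$; uniqueness in the full space follows since the difference of two solutions lies in $W_{\omega,\#}^0(Y^+)$ and $a$ is coercive there.

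Next, interior and boundary regularity: away from the support of $G$ the equation is $-\nabla\cdot(D^+\nabla w) = 0$ with constant coefficients, so by standard $L^p$ elliptic regularity up to the flat boundary $Y^0$ (where the data $g\in C^1$), and interior regularity elsewhere, $w_{j,1}^{+,\bl}\in W^{2,p}_{\loc}(\R^n_+)\cap C^0(\overline{\R^n_+})$ for every finite $p$, using periodicity to transfer local estimates across cell boundaries; Sobolev embedding for $p>n$ gives local $C^{1,\alpha}$ bounds. For the exponential decay of $\nabla w_{j,1}^{+,\bl}$, I would use the classical Tartar/Saint-Venant argument: set $E(R) := \int_{Y^{n-1}\times(R,\infty)} |\nabla w|^2$; since $w$ solves the homogeneous equation for $y_n\ge 1$, testing with $w$ (which is legitimate because of the weight) and using the Poincaré inequality on the half-strip $\{y_n>R\}$ (constant independent of $R$) yields a differential inequality $E(R)\le -C E'(R)$, hence $E(R)\le E(1)e^{-(R-1)/C}$, which combined with the interior $W^{2,p}$-estimates upgrades to a pointwise exponential bound $\|\nabla w_{j,1}^{+,\bl}(\cdot,y_n)\|_{L^\infty}\le Ce^{-\omega y_n}$ for suitable $\omega>0$; this also retroactively confirms $w_{j,1}^{+,\bl}\in W_{\omega,\#}(Y^+)$ with the stated $\omega$. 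Together with the local $W^{1,\infty}$ bound near $y_n=0$ this gives the claimed uniform estimate $\|w_{j,1}^{+,\bl}\|_{W^{1,\infty}(\R^n_+)}\le C$. I expect the main obstacle to be the careful bookkeeping of the uniform (in $R$) Poincaré constants and the justification that the weighted test functions are admissible in the variational identities used for the decay estimate — in particular making the Saint-Venant energy argument rigorous in the periodic strip rather than appealing to it informally.
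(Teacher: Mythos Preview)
Your approach is correct in outline but takes a different and longer route than the paper's. The paper simply cites \cite[Theorem 10.1]{LionsBookControl1981} for existence, uniqueness, and the exponential decay (i.e., membership in $W_{\omega,\#}(Y^\pm)$), then obtains the $L^\infty$-bound on $w_{j,1}^{\pm,\bl}$ directly from the weak maximum principle (\cite[Section 8.1]{GilbargTrudingerEllipticEquations}), namely $\sup_{Y^\pm}|w_{j,1}^{\pm,\bl}|\le\sup_{Y^0}|w_{j,1}^M(\cdot,\pm1)|$, and finally combines the uniform interior $W^{2,p}$-estimate \cite[Theorem 9.11]{GilbargTrudingerEllipticEquations} on unit balls with Morrey's inequality to bound $\nabla w_{j,1}^{\pm,\bl}$ uniformly. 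Your plan instead builds everything by hand: Lax--Milgram in an unweighted energy space, then a Saint-Venant/Tartar energy-decay argument to recover the exponential decay, then the gradient bound via local $W^{2,p}$ estimates fed by that decay. This is self-contained and avoids both the Lions reference and the maximum principle, which is a virtue; the price is that the Saint-Venant step in the periodic strip (your acknowledged obstacle) needs careful justification, whereas the paper's maximum-principle route is a one-liner for constant coefficients.

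Two small corrections: your claimed Poincar\'e inequality on $Y^{n-1}\times(0,R)$ with constant independent of $R$ is false (take $v=y_n$; the ratio grows like $R$), though you do not actually need uniformity for the Lax--Milgram step --- completeness and coercivity with respect to $\|\nabla v\|_{L^2(Y^+)}$ follow without it. Also, Lax--Milgram as you set it up yields a solution with $\nabla v\in L^2(Y^+)$ unweighted; membership in $W_{\omega,\#}^0(Y^+)$ is only obtained \emph{after} the Saint-Venant decay estimate, so you should phrase the existence step in the unweighted space and upgrade afterwards.
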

\begin{proof}
The existence and uniqueness follows from \cite[Theorem 10.1]{LionsBookControl1981} and the regularity result from  Lemma \ref{CellProblemFirstOrderLayerExistence}. The local $W^{2,p}$-regularity follows from the $L^p$-theory for elliptic equations, which also implies the continuity of the solution. It remains to check that the solution and its gradient are bounded on $\R^n_{\pm}$. This follows from the weak maximum principle, see \cite[Theorem A.1.1]{NeussRaduDissertation1999} and also \cite[Section 8.1]{GilbargTrudingerEllipticEquations}. More precisely we have
\begin{align*}
\sup_{y \in \overline{Y^{\pm}}} \left|w_{j,1}^{\pm,\bl}(y)\right| \le \sup_{y \in \overline{Y^0}} \left|w_{j,1}^{\pm,\bl}(y)\right|.
\end{align*}
Now, the local estimate from \cite[Theorem 9.11]{GilbargTrudingerEllipticEquations} which holds uniformly on every ball with fixed radius in $\R^n$ and the Morrey inequality imply the boundedness of the gradient.
\end{proof}
We define the first order corrector term $c_1^{\pm,\bl}$  for the bulk-domains $\Omega^{\pm}$ by
\begin{align}\label{DefFirstOrderCorrectorBulk}
c_1^{\pm,\bl}(t,x,y) &:= \psi(x_n)\sum_{j=1}^{n-1} \partial_{x_j} c_0^{\pm}(t,x) w_{j,1}^{\pm,\bl}(y) &\mbox{ in }& (0,T)\times \Omega^{\pm} \times Y^{\pm}.
\end{align}
Here, we take  $\psi \in C_0^{\infty}(-H,H)$ with $0\le \psi \le 1$ and $\psi = 1$ in a neighborhood of $0$.
Now, the functions $c_{\epsilon,\app,2}^{\pm}= c_0^{\pm} + \epsilon c_1^{\pm,\bl}\left(\cdot,\frac{\cdot}{\epsilon}\right)$ and $c_{\epsilon,\app,1}^M = c_0^M + \epsilon c_1^M\left(\bar{\cdot},\frac{\cdot}{\epsilon}\right)$ coincide on the interfaces $\sepm$. 

\vspace{1em}
\noindent\textbf{Corrector of second order for the thin layer $\oem$:}

The corrector $\epsilon c_1^{\pm,\bl}$ leads to a  jump of the normal fluxes across $\sepm$. Therefore, we add an additional corrector of second order in the thin layer. We emphasize that due to the different scaling of the diffusion coefficients in the bulk-domains and the thin layer in the microscopic problem, we can expect correctors of different orders in the bulk domains and the layer.

For $j=1,\ldots,n-1$  the function $w_{j,2}^M \in \hop(Z)/\R$  solves the following cell problem:
\begin{align}
\begin{aligned}\label{CellProblemSecondOrderLayerM}
-\nabla_y \cdot \left(D^M \nabla_y w_{j,2}^M\right) &= 0 &\mbox{ in }& Z,
\\
-D^M\nabla_y w_{j,2}^M \cdot \nu &= -D^{\pm} \nabla_y w_{j,1}^{\pm,\bl}(\y,0)\cdot \nu &\mbox{ on }& S^{\pm},
\\
w_{j,2}^M \mbox{ is } &Y^{n-1}\mbox{-periodic, } \int_Z w_{j,2}^M dy = 0.
\end{aligned}
\end{align}

\begin{lemma}\label{CellProblemsSecondOrderLayerExistence}
For $j=1,\ldots,n-1$, there exists a  unique solution  $w_{j,2}^M  \in W^{2,p}(Z)$ of Problem $\eqref{CellProblemSecondOrderLayerM}$ for arbitrary large $p\in (1,\infty)$. Especially, we have
\begin{align*}
\|w_{j,2}^M\|_{C^1(\overline{Z})} \le C.
\end{align*}
\end{lemma}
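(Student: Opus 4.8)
The plan is to treat Problem \eqref{CellProblemSecondOrderLayerM} as a standard Neumann problem for a second-order linear elliptic operator with Lipschitz coefficients on the bounded, smooth (in fact product) domain $Z$, and to obtain existence, uniqueness, and $W^{2,p}$-regularity essentially by the same argument as in Lemma \ref{CellProblemFirstOrderLayerExistence}. First I would check the solvability (compatibility) condition: testing the weak formulation with the constant function, the right-hand side must satisfy $\int_{S^+} D^+\nabla_y w_{j,1}^{+,\bl}(\y,0)\cdot\nu\,d\sigma + \int_{S^-} D^-\nabla_y w_{j,1}^{-,\bl}(\y,0)\cdot\nu\,d\sigma = 0$. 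This is where the only real content lies: I would verify it by integrating the equation $-\nabla_y\cdot(D^\pm\nabla w_{j,1}^{\pm,\bl})=0$ from \eqref{BoundaryLayerBulk} over the stripe $Y^{n-1}\times(0,R)$ (resp. $(-R,0)$), using the divergence theorem, the $Y^{n-1}$-periodicity (lateral boundary terms cancel), and the exponential decay of $\nabla w_{j,1}^{\pm,\bl}$ as $y_n\to\pm\infty$ (so the flux through $y_n=\pm R$ vanishes as $R\to\infty$); this shows each boundary flux $\int_{Y^0} D^\pm\nabla w_{j,1}^{\pm,\bl}(\cdot,0)\cdot e_n\,d\y$ is zero separately, which in particular gives the required compatibility on $S^+\cup S^-$.

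Once compatibility is established, existence and uniqueness of a weak solution $w_{j,2}^M\in H^1_{\#}(Z)/\R$ follow from the Lax–Milgram theorem applied on the quotient space, using the coercivity of $D^M$ from Assumption \ref{VoraussetzungenDuenneSchichtDiffusionskoeffizienten} together with the Poincaré–Wirtinger inequality on $Z$; the Neumann data are smooth enough since $w_{j,1}^{\pm,\bl}\in W^{1,\infty}(\R^n_\pm)$ by Lemma \ref{BoundaryLayerBulkExistence}, so $D^\pm\nabla_y w_{j,1}^{\pm,\bl}(\y,0)\cdot\nu\in L^\infty(S^\pm)\hookrightarrow W^{1-1/p,p}$-dual for suitable $p$. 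For the $W^{2,p}$-regularity I would invoke the $L^p$-theory for the Neumann problem for elliptic operators with Lipschitz (hence $C^{0,1}$) coefficients on a domain with the geometry of $Z$, exactly as cited in Lemma \ref{CellProblemFirstOrderLayerExistence} via \cite[Chapter 2]{GrisvardEllipticProblems}; one should note that the Neumann data on $S^\pm$, being traces of $W^{1,\infty}$ functions, lie in $W^{1,p}(S^\pm)$ and hence in the appropriate trace space, which is what the $L^p$-Neumann estimate requires to conclude $w_{j,2}^M\in W^{2,p}(Z)$ for all $p\in(1,\infty)$.

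Finally, the bound $\|w_{j,2}^M\|_{C^1(\overline Z)}\le C$ follows from the Sobolev embedding $W^{2,p}(Z)\hookrightarrow C^1(\overline Z)$ for $p>n$, together with the a priori $W^{2,p}$-estimate (which controls $\|w_{j,2}^M\|_{W^{2,p}(Z)}$ by the norm of the Neumann data plus $\|w_{j,2}^M\|_{L^p}$, the latter handled by Poincaré after using $\int_Z w_{j,2}^M=0$), and the uniform bound on the data coming from $\|w_{j,1}^{\pm,\bl}\|_{W^{1,\infty}(\R^n_\pm)}\le C$ in Lemma \ref{BoundaryLayerBulkExistence}. I expect the compatibility-condition verification to be the main (indeed the only non-routine) obstacle, since it requires carefully passing to the limit $R\to\infty$ in the stripe integral and using the exponential decay of the boundary-layer gradient; everything after that is a direct transcription of the elliptic theory already used for the first-order cell problem.
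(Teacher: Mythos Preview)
Your proposal is correct and follows the same approach as the paper, which simply invokes the $L^p$-theory for elliptic Neumann problems together with the regularity of $w_{j,1}^{\pm,\bl}$ from Lemma \ref{BoundaryLayerBulkExistence}; your explicit verification of the compatibility condition is a useful addition that the paper leaves implicit. One small point: to place the Neumann data in the required trace space $W^{1-1/p,p}(S^\pm)$ you should invoke the $W^{2,p}_{\loc}$ regularity of $w_{j,1}^{\pm,\bl}$ (so that $D^\pm\nabla w_{j,1}^{\pm,\bl}\in W^{1,p}_{\loc}$ has a well-defined trace), rather than only the $W^{1,\infty}$ bound.
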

\begin{proof}
Again, the claim follows from the $L^p$-theory for the elliptic equations with Neumann-boundary conditions, and the regularity results from Lemma \ref{BoundaryLayerBulkExistence}.
\end{proof}
We define the second order corrector $c_2^M$ in the thin layer by
\begin{align}\label{DefSecondOrderCorrectorLayer}
c_2^M(t,\x,y) &:=   \sum_{j=1}^{n-1} \partial_{x_j} c_0^M(t,\x) w_{j,2}^M(y) &\mbox{ in }& (0,T)\times \Sigma \times Z.
\end{align}

\section{Error estimates}
\label{SectionErrorEstimates}

In this section, we give the proof of our main results. Roughly speaking, the idea is to apply the microscopic differential operator from Problem  $\eqref{PDE_ReacDiffEquation}$ to the microscopic solution and the approximative solution $c_{\epsilon,\app,j}$ ($j=1,2$), and subtract these terms from each other. More precisely, we start from the following term:
For $\pepm \in C^{\infty}(\Omega^{\pm})$ and $\pem \in C^{\infty}(\oem)$ with $\pepm|_{\Sigma} = \pem|_{\sepm}$ we consider for $j=1,2$
\begin{align}
\begin{aligned} \label{StartingEquationErrorEstimates}
\sum_{\pm}& \left[ \int_{\Omega^{\pm}} \partial_t \big(\cepm - c_{\epsilon,\app,j}^{\pm}\big) \pepm dx + \int_{\Omega^{\pm}} D^{\pm}\nabla \big(\cepm - c_{\epsilon,\app,j}^{\pm} \big) \cdot \nabla \pepm dx  \right]
\\
&+ \foe \int_{\oem} \partial_t \big(\cem - c_{\epsilon,\app,j-1}^M \big) \pem dx + \foe \int_{\oem} D^M\left(\fxe\right) \nabla \big(\cem - c_{\epsilon,\app,j}^M \big)\cdot \nabla \pem dx 
\\
=:& \sum_{\pm} \left[A_{\epsilon}^{\pm,1} + A_{\epsilon}^{\pm,2}\right] + A_{\epsilon}^{M,3} + A_{\epsilon}^{M,4},
\end{aligned}
\end{align}
where  we use the short notation $c_{\epsilon,\app,0}^M:= c_0^M$. At this point, we do not give precise information about the regularity of the macroscopic solution $(c_0^+,c_0^M,c_0^-)$. This will be specified in the following results. 
Our aim is to estimate the terms in $\eqref{StartingEquationErrorEstimates}$ and to choose a suitable test function. However, as mentioned in Remark \ref{RemarkUnstetigkeitApproximationErsterOrdnung} and \ref{RemarkUnstetigkeitApproximationZweiterOrdnung}, the error function $c_{\epsilon} - c_{\epsilon,\app,j}$ is not an admissible test function. Therefore, we have to add an additional corrector term in the bulk-domains, what leads to an additional error.


We estimate the terms in $\eqref{StartingEquationErrorEstimates}$ for $j=2$, because the most error terms carry over to the case $j=1$.  First of all, by using $\eqref{DefFirstOrderCorrectorBulk}$ we obtain
\begin{align*}
A_{\epsilon}^{\pm,2} &= \int_{\Omega^{\pm}} D^{\pm}  \nabla \cepm \cdot \nabla \pepm  - D^{\pm} \nabla c_0^{\pm}\cdot\nabla \pepm 
 - \epsilon D^{\pm}\nabla \left(c_1^{\pm,\bl}\left(x,\fxe\right)\right) \cdot \nabla \pepm  dx
\\
&:= \sum_{j=1}^3 B_{\epsilon}^{\pm,j}.
\end{align*}
For the last term $B_{\epsilon}^{\pm,3}$, an elemental calculation gives
\begin{align*}
B_{\epsilon}^{\pm,3} &= -\epsilon \sum_{j=1}^{n-1} \int_{\Omega^{\pm}} \psi(x_n) w_{j,1}^{\pm,\bl}\left(\fxe\right) D^{\pm}  \nabla \partial_{x_j} c_0^{\pm} \cdot \nabla \pepm dx
\\
&\hspace{1em} - \epsilon \sum_{j=1}^{n-1} \int_{\Omega^{\pm}}  \psi^{\prime}(x_n) \partial_{x_j} c_0^{\pm} w_{j,1}^{\pm,\bl}\left(\fxe\right) D^{\pm}  e_n \cdot \nabla \pepm dx
\\
&\hspace{1em} - \sum_{i=1}^{n-1}\sum_{k = 1}^n \int_{\Omega^{\pm}} \psi(x_n)\left(\sum_{j=1}^n D_{jk}^{\pm} \partial_{y_j} w_{i,1}^{\pm,\bl}\left(\fxe\right)\right) \partial_{x_i} c_0^{\pm} \partial_{x_k} \pepm dx.
\end{align*}
Let us define the tensor $T^{\pm,\bl}: \overline{Y^{\pm}} \rightarrow \R^{n\times (n-1)}$ by
\begin{align}
\label{Tpmbl}
T^{\pm,\bl}_{ki}(y) &:= - \sum_{j=1}^n D_{jk}^{\pm}\partial_{y_j} w_{i,1}^{\pm,\bl}(y)
\end{align}
for $k=1,\ldots,n$ and $i=1,\ldots,n-1$.
This gives us  (we consider $\nabla_{\x}$ as both, a vector in $\R^{n-1}$ and in $\R^n$ with the last component equal to zero)
\begin{align*}
A_{\epsilon}^{\pm,2} =& \int_{\Omega^{\pm}} D^{\pm} \nabla \cepm\cdot \nabla \pepm dx - \int_{\Omega^{\pm}} D^{\pm} \nabla c_0^{\pm} \cdot \nabla \pepm dx 
\\
&+ \int_{\Omega^{\pm}} \psi(x_n) T^{\pm,\bl}\left(\fxe\right) \nabla_{\x} c_0^{\pm} \cdot \nabla \pepm dx
\\
& - \epsilon \sum_{j=1}^{n-1} \int_{\Omega^{\pm}} \psi(x_n)D^{\pm} \nabla \partial_{x_j} c_0^{\pm}  \cdot \nabla \pepm  \psi(x_n) w_{j,1}^{\pm,\bl}\left(\fxe\right) dx
\\
&-  \epsilon \sum_{j=1}^{n-1} \int_{\Omega^{\pm}} \psi^{\prime}(x_n) \partial_{x_j} c_0^{\pm} w_{j,1}^{\pm,\bl}\left(\fxe\right) D^{\pm} e_n \cdot \nabla \pepm dx.
\end{align*}
With similar arguments and by adding $D^{M,\ast}$ in a suitable way, we obtain for $A_{\epsilon}^{M,4}$ by using $\eqref{DefFirstOrderCorrectorLayer}$ and $\eqref{DefSecondOrderCorrectorLayer}$ (if it is necessary, we consider $D^{M,\ast}$ as an element of $\R^{n\times n}$ by setting the $n$-th row and column equal to zero):
\begin{align*}
A_{\epsilon}^{M,4}=& \foe \int_{\oem} D^M\left(\fxe\right) \nabla \cem \cdot \nabla \pem dx - \foe \int_{\oem} D^{M,\ast}\nabla_{\x} c_0^M \cdot \nabla \pem dx
\\
&+ \foe \int_{\oem} T^{M,1}\left(\fxe\right) \nabla_{\x}c_0^M \cdot \nabla \pem dx 
\\
&- \sum_{j=1}^{n-1} \int_{\oem} D^M\left(\fxe\right) \nabla_{\x} \partial_{x_j} c_0^M \cdot \nabla \pem w_{j,1}^M\left(\fxe\right) dx 
\\
&+ \int_{\oem} T^{M,2}\left(\fxe\right) \nabla_{\x} c_0^M\cdot \nabla \pem dx 
\\
&- \epsilon \sum_{j=1}^{n-1} \int_{\oem} D^M\left(\fxe\right) \nabla_{\x} \partial_{x_j} c_0^M \cdot \nabla \pem w_{j,2}^M\left(\fxe\right) dx,
\end{align*}
with
\begin{align}
\label{TMone}
T^{M,1} &: \overline{Z} \rightarrow \R^{n\times (n-1)},
\\
\label{TMtwo}
T^{M,2} &:\overline{Z} \rightarrow \R^{n\times (n-1)}
\end{align}
defined for $k\in \{1,\ldots,n\}$ and $ i \in \{1,\ldots,n-1\}$ by
\begin{align*}
T^{M,1}_{ki}(y) &:= D_{ik}^{M,\ast} - D_{ik}^{M}(y) -\sum_{j=1}^n D_{jk}^M(y) \partial_{y_j} w_{i,1}^M(y) ,
\\
T^{M,2}_{ki}(y) &:= -\sum_{j=1}^n D_{jk}^M(y) \partial_{y_j} w_{i,2}^M(y).
\end{align*}
Now, let us define the averaged function $\bpem \in H^1(\Sigma)$ by
\begin{align*}
\bpem(\x):= \frac{1}{2\epsilon}\int_{-\epsilon}^{\epsilon} \pem(\x,x_n) dx_n.
\end{align*}
We observe that with this definition we can write
\begin{align*}
\foe \int_{\oem} \partial_t &c_0^M \pem dx + \foe \int_{\oem} D^{M,\ast}  \nabla_{\x} c_0^M \cdot \nabla \pem dx 
\\
&= |Z|\int_{\Sigma} \partial_t c_0^M \bpem d\x + |Z|\int_{\Sigma} D^{M,\ast} \nabla_{\x} c_0^M \cdot \nabla_{\x} \bpem d\x.
\end{align*}
Altogether, we obtain for the term $\eqref{StartingEquationErrorEstimates}$:
\begin{align*}
\sum_{\pm}&\left[A_{\epsilon}^{\pm,1} + A_{\epsilon}^{\pm,2} \right] + A_{\epsilon}^{M,3} + A_{\epsilon}^{M,4}
\\
=& \sum_{\pm}\left[ \int_{\Omega^{\pm}} \partial_t \cepm \pepm dx + \int_{\Omega^{\pm}} D^{\pm}\nabla \cepm \cdot \nabla \pepm dx \right] 
\\
&\hspace{4em}+ \foe \int_{\oem} \partial_t \cem \pem dx + \foe \int_{\oem} D^M\left(\fxe\right)\nabla \cem \cdot \nabla  \pem dx
\\
&- \sum_{\pm} \left[ \int_{\Omega^{\pm}} \partial_t c_0^{\pm} \pepm dx + \int_{\Omega^{\pm}} D^{\pm} \nabla c_0^{\pm} \cdot \nabla \pepm dx \right] 
\\
&\hspace{4em}- |Z|\int_{\Sigma} \partial_t c_0^M \bpem d\x - |Z|\int_{\Sigma} D^{M,\ast} \nabla_{\x}c_0^M\cdot \nabla_{\x} \bpem d\x
\\
& + \Delta_{\epsilon,\partial_t} +\Delta_{\epsilon,T} + \Delta_{\epsilon,rest},
\end{align*}
with
\begin{align}
\Delta_{\epsilon,\partial_t}:= -\foe \int_{\oem} \epsilon \partial_t c_1^M\left(\x,\fxe\right) \pem dx -  \sum_{\pm} \epsilon \int_{\Omega^{\pm}} \partial_t c_1^{\pm,\bl}\left(x,\fxe\right) \pepm dx,  
\notag
\\
\begin{aligned}
\label{DeltaEpsT}
\Delta_{\epsilon,T}:=& \sum_{\pm} \left[ \int_{\Omega^{\pm}} \psi(x_n) T^{\pm,\bl}\left(\fxe\right) \nabla c_0^{\pm} \cdot \nabla \pepm dx\right]
\\
&+ \foe \int_{\oem} T^{M,1}\left(\fxe\right) \nabla_{\x} c_0^M\cdot \nabla \pem dx + \int_{\oem} T^{M,2}\left(\fxe\right) \nabla_{\x} c_0^M \cdot \nabla \pem dx,
\end{aligned}
\end{align}
and
\begin{align*}
\Delta_{\epsilon,rest} :=& -\epsilon \sum_{\pm}\sum_{j=1}^{n-1} \int_{\Omega^{\pm}} D^{\pm} \nabla \partial_{x_j} c_0^{\pm} \cdot \nabla \pepm  \psi(x_n) w_{j,1}^{\pm,\bl}\left(\fxe\right) dx 
\\
&- \epsilon \sum_{\pm} \sum_{j=1}^{n-1} \int_{\Omega^{\pm}} \psi^{\prime}(x_n) \partial_{x_j} c_0^{\pm} w_{j,1}^{\pm,\bl}\left(\fxe\right) D^{\pm}  e_n \cdot \nabla \pepm dx
\\
&- \sum_{j=1}^{n-1} \int_{\oem} D^M\left(\fxe\right) \nabla_{\x} \partial_{x_j} c_0^M \cdot \nabla \pem w_{j,1}^M\left(\fxe\right) dx 
\\
&- \epsilon \sum_{j=1}^{n-1} \int_{\oem} D^M\left(\fxe\right) \nabla_{\x} \partial_{x_j} c_0^M\cdot \nabla \pem  w_{j,2}^M \left(\fxe\right) dx .
\end{align*}
We emphasize that $(\pepm,\pem)$ is an admissible test function for the variational equation $\eqref{VE_Omega}$ of the microscopic problem, but $(\pepm,\bpem)$ is not an admissible test function for the variational equation $\eqref{VE_MacrEquaZero}$ of the macroscopic model of zeroth order. Therefore, in the following we use equation $\eqref{MacroscopicProblemZeroOrder}$ tested with $\big(\pepm, \bpem\big)$.
Due to the regularity of $c_0^{\pm}$ from Proposition \ref{RegularityResultsMacroscopicSolution}, the normal fluxes are elements of $L^2(\Sigma)$. Hence, from $\eqref{VE_Omega}$ and $\eqref{MacroscopicProblemZeroOrder}$ we obtain
\begin{align}
\begin{aligned}
\label{AuxiliaryEquation}
\sum_{\pm}&\left[A_{\epsilon}^{\pm,1} + A_{\epsilon}^{\pm,2} \right] + A_{\epsilon}^{M,3} + A_{\epsilon}^{M,4}
\\
=&\sum_{\pm} \left[ \int_{\Omega^{\pm}} \fepm(\cepm) \pepm dx - \int_{\Omega^{\pm}} \int_{Y^n} f^{\pm}(t,y,c_0^{\pm}) \pepm dy dx  \right]
\\
&+ \foe \int_{\oem} \gem(\cem)\pem dx - \int_{\Sigma} \int_Z g^M(t,y,c_0^M) \bpem dy d\x
\\
&+\sum_{\pm} \int_{\Sigma} D^{\pm} \nabla c_0^{\pm}\cdot \nu^{\pm} \big( \pepm - \bpem \big) d\sigma + \Delta_{\epsilon,\partial_t} + \Delta_{\epsilon,T} + \Delta_{\epsilon,rest}.
\end{aligned}
\end{align}
We have to estimate the terms on the right-hand side, where the most challenging term is $\Delta_{\epsilon,T}$. We start with $\Delta_{\epsilon,\partial_t}$, which only occurs for $j=2$.

\begin{lemma}[Estimate for $\Delta_{\epsilon,\partial_t}$]
\label{EstimateDeltaEpsPartialT}
For $c_0^{\pm} \in H^1((0,T),H^1(\Omega)^{\pm}))$ and $c_0^M\in H^1((0,T),H^1(\Sigma))$, for almost every $t \in (0,T)$ it holds that
\begin{align*}
\Delta_{\epsilon,\partial_t} \le C \left(\sqrt{\epsilon} \|\partial_t c_0^M\|_{H^1(\Sigma)} \|\pem\|_{L^2(\oem)} + \epsilon \|\partial_t c_0^{\pm}\|_{H^1(\Omega^{\pm})}\|\pepm\|_{L^2(\Omega^{\pm})} \right).
\end{align*}
\end{lemma}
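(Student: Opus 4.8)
The estimate follows directly from the explicit product structure of the correctors combined with the uniform bounds on the cell and boundary-layer solutions from Lemmas \ref{CellProblemFirstOrderLayerExistence} and \ref{BoundaryLayerBulkExistence}. The essential point for the membrane term is that the weights $\foe$ and $\epsilon$ cancel, so that
\[
-\foe \int_{\oem} \epsilon\, \partial_t c_1^M\left(\x,\fxe\right)\pem\,dx = -\int_{\oem} \left(\partial_t c_1^M\right)\left(\x,\fxe\right)\pem\,dx ,
\]
and, since $w_{j,1}^M$ is independent of $t$, differentiating $\eqref{DefFirstOrderCorrectorLayer}$ gives $\left(\partial_t c_1^M\right)(t,\x,y) = \sum_{j=1}^{n-1}\partial_t\partial_{x_j}c_0^M(t,\x)\,w_{j,1}^M(y)$. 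With the bound $\|w_{j,1}^M\|_{C^1(\overline{Z})}\le C$ one obtains the pointwise estimate $\big|\left(\partial_t c_1^M\right)(t,\x,\fxe)\big|\le C\sum_j|\partial_t\partial_{x_j}c_0^M(t,\x)|$ on $\oem$.

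The plan is then to apply Cauchy--Schwarz in $\oem$ and integrate out the $n$-th variable, which ranges over $(-\epsilon,\epsilon)$ and thus contributes a factor $2\epsilon$ to a function depending only on $\x$:
\[
\left|\int_{\oem}\left(\partial_t c_1^M\right)\left(\x,\fxe\right)\pem\,dx\right| \le C\left(\int_{\oem}\sum_{j=1}^{n-1}|\partial_t\partial_{x_j}c_0^M(\x)|^2\,dx\right)^{\!1/2}\!\!\|\pem\|_{L^2(\oem)} \le C\sqrt{\epsilon}\,\|\partial_t c_0^M\|_{H^1(\Sigma)}\|\pem\|_{L^2(\oem)} ,
\]
which is the first claimed term. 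For the bulk contribution, differentiating $\eqref{DefFirstOrderCorrectorBulk}$ (again $\psi$ and $w_{j,1}^{\pm,\bl}$ are $t$-independent) yields $\left(\partial_t c_1^{\pm,\bl}\right)(t,x,y)=\psi(x_n)\sum_j\partial_t\partial_{x_j}c_0^{\pm}(t,x)\,w_{j,1}^{\pm,\bl}(y)$; using $\|w_{j,1}^{\pm,\bl}\|_{W^{1,\infty}(\mathbb{R}^n_{\pm})}\le C$ and $0\le\psi\le 1$ gives the pointwise bound $\big|\left(\partial_t c_1^{\pm,\bl}\right)(t,x,\fxe)\big|\le C\sum_j|\partial_t\partial_{x_j}c_0^{\pm}(t,x)|$, and a plain Cauchy--Schwarz inequality on $\Omega^{\pm}$ produces $\epsilon\big|\int_{\Omega^{\pm}}\partial_t c_1^{\pm,\bl}(x,\fxe)\pepm\,dx\big|\le C\epsilon\,\|\partial_t c_0^{\pm}\|_{H^1(\Omega^{\pm})}\|\pepm\|_{L^2(\Omega^{\pm})}$. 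Summing over $\pm$ and adding the membrane estimate finishes the argument.

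There is no deep obstacle here: the only care needed is the scaling bookkeeping in the membrane integral, namely that the $\epsilon^{-1}$ weight is exactly compensated by the $\epsilon$ in front of $c_1^M$, and that the residual $\oem$-integral of a quantity depending only on $\x$ produces the half-power $\sqrt{\epsilon}$ through the $2\epsilon$-thickness of the layer. One should also note that the argument $\fxe$ of $w_{j,1}^{\pm,\bl}$ lies in $\overline{\mathbb{R}^n_{\pm}}$ since $\pm x_n>0$ on $\Omega^{\pm}$, so that the $W^{1,\infty}(\mathbb{R}^n_{\pm})$ bound is indeed applicable, and that $c_0^M\in H^1((0,T),H^1(\Sigma))$, $c_0^{\pm}\in H^1((0,T),H^1(\Omega^{\pm}))$ guarantee $\partial_t\partial_{x_j}c_0^M\in L^2(\Sigma)$ and $\partial_t\partial_{x_j}c_0^{\pm}\in L^2(\Omega^{\pm})$ for a.e.\ $t$, which is exactly what the estimates above require.
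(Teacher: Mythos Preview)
Your proof is correct and follows exactly the approach indicated in the paper: apply the H\"older (Cauchy--Schwarz) inequality together with the essential boundedness of $w_{j,1}^M$ and $w_{j,1}^{\pm,\bl}$ from Lemmas \ref{CellProblemFirstOrderLayerExistence} and \ref{BoundaryLayerBulkExistence}. You have simply spelled out the scaling bookkeeping (the cancellation of $\foe\cdot\epsilon$ and the $\sqrt{\epsilon}$ from the layer thickness) that the paper leaves implicit.
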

\begin{proof}
The estimate follows easily from the H\"older-inequality and the essential boundedness of $w_{j,1}^{\pm,\bl}$ and $w_{j,1}^M$, see Lemma \ref{CellProblemFirstOrderLayerExistence} and \ref{BoundaryLayerBulkExistence}.
\end{proof}

\begin{remark}
Here we use the additional regularity for the time derivative of the macroscopic solution from the hypothesis in Theorem \ref{MainTheoremSecondOrderApproximation}. 
\end{remark}

\begin{lemma}[Estimate for $\Delta_{\epsilon,rest}$]
\label{EstimateDeltaEpsRest}
For $c_0^{\pm} \in L^2((0,T),H^2(\Omega^{\pm}))$ and $c_0^M \in L^2((0,T),H^2(\Sigma))$, for almost every $t \in (0,T)$ it holds that
\begin{align*}
\Delta_{\epsilon,rest} &\le C \left(\sqrt{\epsilon} \|c_0^M\|_{H^2(\Sigma)}\big\|\nabla \pem\big\|_{L^2(\oem)} + \epsilon \sum_{\pm} \|c_0^{\pm}\|_{H^2(\Omega^{\pm})} \big\|\nabla \pepm \big\|_{L^2(\Omega^{\pm})} \right).
%
\end{align*}
\end{lemma}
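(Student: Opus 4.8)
The estimate for $\Delta_{\epsilon,rest}$ is the easy one among the error terms: none of the four summands contains a problematic $\foe$ in front that would require the skew-symmetric-matrix trick — every term either already carries a factor $\epsilon$ or is a layer integral over $\oem$ (which has width $2\epsilon$, contributing an $\epsilon^{1/2}$ after Cauchy--Schwarz). So the plan is simply to estimate each of the four summands by H\"older's inequality, using the uniform $L^\infty$-bounds on the cell and boundary-layer correctors from Lemma \ref{CellProblemFirstOrderLayerExistence}, Lemma \ref{BoundaryLayerBulkExistence}, and Lemma \ref{CellProblemsSecondOrderLayerExistence}, and then collecting powers of $\epsilon$.

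\begin{proof}
We estimate the four summands in the definition of $\Delta_{\epsilon,rest}$ separately. For the first one, H\"older's inequality together with the essential boundedness of $w_{j,1}^{\pm,\bl}$ (Lemma \ref{BoundaryLayerBulkExistence}), $0\le \psi \le 1$, and the boundedness of $D^{\pm}$ gives
\begin{align*}
\bigg| \epsilon \sum_{\pm}\sum_{j=1}^{n-1} \int_{\Omega^{\pm}} D^{\pm} \nabla \partial_{x_j} c_0^{\pm} \cdot \nabla \pepm \,\psi(x_n) w_{j,1}^{\pm,\bl}\left(\fxe\right) dx \bigg|
\le C\epsilon \sum_{\pm} \|c_0^{\pm}\|_{H^2(\Omega^{\pm})} \|\nabla \pepm\|_{L^2(\Omega^{\pm})}.
\end{align*}
The second summand is treated identically, with $\psi'$ bounded on its support and $D^{\pm}e_n$ bounded, yielding the same type of estimate (here only the $H^1$-norm of $c_0^{\pm}$ enters, which is controlled by the $H^2$-norm). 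For the third summand, which has no prefactor $\epsilon$ but is an integral over the thin layer $\oem = \Sigma\times(-\epsilon,\epsilon)$, we use the boundedness of $D^M$ and $w_{j,1}^M$ (Lemma \ref{CellProblemFirstOrderLayerExistence}) and then H\"older's inequality on $\oem$; since $\nabla_{\x}\partial_{x_j}c_0^M$ does not depend on $x_n$, the integral over $(-\epsilon,\epsilon)$ produces a factor $\sqrt{2\epsilon}$, so that
\begin{align*}
\bigg| \sum_{j=1}^{n-1} \int_{\oem} D^M\left(\fxe\right) \nabla_{\x} \partial_{x_j} c_0^M \cdot \nabla \pem \, w_{j,1}^M\left(\fxe\right) dx \bigg|
\le C \sqrt{\epsilon}\, \|c_0^M\|_{H^2(\Sigma)} \|\nabla \pem\|_{L^2(\oem)}.
\end{align*}
The fourth summand carries both a factor $\epsilon$ and an integral over $\oem$, so it is even smaller: using the boundedness of $w_{j,2}^M$ (Lemma \ref{CellProblemsSecondOrderLayerExistence}) the same computation gives a bound of order $\epsilon^{3/2}\|c_0^M\|_{H^2(\Sigma)}\|\nabla\pem\|_{L^2(\oem)}$, which is absorbed into the $\sqrt{\epsilon}$-term. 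Summing the four contributions yields the claimed estimate.
\end{proof}

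\textbf{Main obstacle.} There is essentially none here — this lemma is routine. The only point requiring a little care is bookkeeping the powers of $\epsilon$: one must notice that the two layer integrals (third and fourth summands) gain an $\epsilon^{1/2}$ from the $x_n$-integration over the $2\epsilon$-thin slab precisely because the relevant derivative of $c_0^M$ is tangential and hence $x_n$-independent, whereas the two bulk integrals gain their $\epsilon$ from the explicit prefactor. The genuinely hard estimate is the one for $\Delta_{\epsilon,T}$, which is deferred to the subsequent lemma and requires the skew-symmetric-matrix representation of solenoidal fields.
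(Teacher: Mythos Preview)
Your proof is correct and follows exactly the approach the paper takes: the paper's own proof is a one-liner invoking the continuity of $D^M$ and the regularity results from Lemmas \ref{CellProblemFirstOrderLayerExistence}, \ref{BoundaryLayerBulkExistence}, and \ref{CellProblemsSecondOrderLayerExistence}, and you have simply written out the four H\"older estimates explicitly, including the observation that the $x_n$-independence of $\nabla_{\x}\partial_{x_j}c_0^M$ yields the factor $\sqrt{\epsilon}$ from the thin-layer integration.
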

\begin{proof}
The claim follows easily from the continuity of $D^M$ and the regularity results from Lemma \ref{CellProblemFirstOrderLayerExistence}, \ref{BoundaryLayerBulkExistence} and \ref{CellProblemsSecondOrderLayerExistence}. 
\end{proof}

\begin{lemma}[Estimate for the interface term in $\eqref{AuxiliaryEquation}$]\label{ErrorEstimateNormalFlux}
Let $c_0^{\pm} \in L^2((0,T),H^2(\Omega^{\pm}))$. Then almost everywhere in $(0,T)$ it holds that
\begin{align*}
\int_{\Sigma} D^{\pm} \nabla c_0^{\pm} \cdot \nu^{\pm} \big(\pepm - \bpem \big) d\sigma \le C \sqrt{\epsilon} \|c_0^{\pm}\|_{H^2(\Omega^{\pm})} \big\|\nabla \pem \big\|_{L^2(\oem)}.
\end{align*}
\end{lemma}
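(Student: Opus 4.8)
The plan is to exploit the fact that $\pem - \bpem$ has zero average in $x_n$ over each fiber $\{\x\}\times(-\epsilon,\epsilon)$, which turns the thin width into a factor $\sqrt\epsilon$ via a Poincaré--Wirtinger inequality. First I would observe that the quantity $D^{\pm}\nabla c_0^{\pm}\cdot\nu^{\pm}$ on $\Sigma$ is, by the regularity $c_0^{\pm}\in L^2((0,T),H^2(\Omega^{\pm}))$, a well-defined element of $L^2(\Sigma)$ (it is the trace on $\Sigma$ of a function in $H^1(\Omega^{\pm})$ for a.e.\ $t$), with $\|D^{\pm}\nabla c_0^{\pm}\cdot\nu^{\pm}\|_{L^2(\Sigma)} \le C\|c_0^{\pm}\|_{H^2(\Omega^{\pm})}$ by the trace inequality. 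Next, the interface integral over $\Sigma$ needs to be rewritten as an integral over the thin layer $\oem$ so that the width $2\epsilon$ can be brought into play. Since $D^{\pm}\nabla c_0^{\pm}\cdot\nu^{\pm}$ does not depend on $x_n$, I would write
\begin{align*}
\int_{\Sigma} D^{\pm}\nabla c_0^{\pm}\cdot\nu^{\pm}\,\big(\pepm-\bpem\big)\,d\sigma
= \frac{1}{2\epsilon}\int_{\oem} D^{\pm}\nabla c_0^{\pm}\cdot\nu^{\pm}\,\big(\pem - \bpem\big)\,dx,
\end{align*}
using that $\bpem$ is precisely the $x_n$-average of $\pem$, so $\frac{1}{2\epsilon}\int_{-\epsilon}^{\epsilon}(\pem-\bpem)\,dx_n = 0$ and the left-hand trace term is recovered by adding back $\bpem$ (which contributes zero against $\pem-\bpem$ after the $x_n$-integration). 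Actually it is cleaner to start from $\frac{1}{2\epsilon}\int_{\oem} D^{\pm}\nabla c_0^{\pm}\cdot\nu^{\pm}\,\pem\,dx$, use that $\frac{1}{2\epsilon}\int_{-\epsilon}^{\epsilon}\pem\,dx_n=\bpem$, and identify the result with the left-hand side.

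The key estimate is then the Poincaré--Wirtinger inequality on the interval $(-\epsilon,\epsilon)$: for a.e.\ $\x\in\Sigma$,
\begin{align*}
\int_{-\epsilon}^{\epsilon} \big|\pem(\x,x_n)-\bpem(\x)\big|^2\,dx_n \le C\epsilon^2 \int_{-\epsilon}^{\epsilon} \big|\partial_{x_n}\pem(\x,x_n)\big|^2\,dx_n,
\end{align*}
with $C$ independent of $\epsilon$ (it scales like the square of the interval length). Integrating over $\Sigma$ gives $\|\pem-\bpem\|_{L^2(\oem)} \le C\epsilon\,\|\partial_{x_n}\pem\|_{L^2(\oem)} \le C\epsilon\,\|\nabla\pem\|_{L^2(\oem)}$. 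Combining this with the Cauchy--Schwarz inequality applied to the rewritten integral,
\begin{align*}
\Big|\frac{1}{2\epsilon}\int_{\oem} D^{\pm}\nabla c_0^{\pm}\cdot\nu^{\pm}\,\big(\pem-\bpem\big)\,dx\Big|
\le \frac{1}{2\epsilon}\,\big\|D^{\pm}\nabla c_0^{\pm}\cdot\nu^{\pm}\big\|_{L^2(\oem)}\,\big\|\pem-\bpem\big\|_{L^2(\oem)},
\end{align*}
and noting that $\|D^{\pm}\nabla c_0^{\pm}\cdot\nu^{\pm}\|_{L^2(\oem)} = \sqrt{2\epsilon}\,\|D^{\pm}\nabla c_0^{\pm}\cdot\nu^{\pm}\|_{L^2(\Sigma)}$ since the integrand is $x_n$-independent, I get a bound $\frac{1}{2\epsilon}\cdot\sqrt{2\epsilon}\cdot C\|c_0^{\pm}\|_{H^2(\Omega^{\pm})}\cdot C\epsilon\|\nabla\pem\|_{L^2(\oem)} = C\sqrt\epsilon\,\|c_0^{\pm}\|_{H^2(\Omega^{\pm})}\|\nabla\pem\|_{L^2(\oem)}$, which is exactly the claimed inequality.

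I expect the main (though modest) obstacle to be the careful bookkeeping of the $\epsilon$-powers: the rewriting introduces $\frac{1}{2\epsilon}$, the norm conversion from $\oem$ to $\Sigma$ gives back $\sqrt{2\epsilon}$, and the Poincaré--Wirtinger step contributes $\epsilon$, so the net scaling $\epsilon^{-1}\cdot\epsilon^{1/2}\cdot\epsilon = \epsilon^{1/2}$ must be tracked precisely — a sign error or a missing factor would give the wrong order. The only genuinely analytic input is the scaled one-dimensional Poincaré inequality on $(-\epsilon,\epsilon)$ with an $\epsilon$-explicit constant, which follows from the standard Poincaré inequality on $(-1,1)$ by the substitution $x_n = \epsilon s$; everything else is Cauchy--Schwarz and the trace theorem, the latter supplied by the assumed $H^2$-regularity of $c_0^{\pm}$.
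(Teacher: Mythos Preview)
Your rewriting identity
\[
\int_{\Sigma} D^{\pm}\nabla c_0^{\pm}\cdot\nu^{\pm}\,\big(\pepm-\bpem\big)\,d\sigma
= \frac{1}{2\epsilon}\int_{\oem} D^{\pm}\nabla c_0^{\pm}\cdot\nu^{\pm}\,\big(\pem - \bpem\big)\,dx
\]
is false: you yourself note that $\frac{1}{2\epsilon}\int_{-\epsilon}^{\epsilon}(\pem-\bpem)\,dx_n=0$, and since the factor $D^{\pm}\nabla c_0^{\pm}\cdot\nu^{\pm}$ is independent of $x_n$, the right-hand side vanishes identically. The left-hand side does not, because $\pepm|_{\Sigma}=\pem|_{\sepm}=\pem(\x,\pm\epsilon)$ is the \emph{trace} of $\pem$ at $x_n=\pm\epsilon$, not its $x_n$-average. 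Your alternative ``cleaner'' route has the same defect: averaging $\pem$ in $x_n$ produces $\bpem$, not $\pepm|_{\Sigma}$. So the volume Poincar\'e--Wirtinger inequality on $\oem$ is applied to an expression that is identically zero, and the argument collapses.

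The correct step---and this is what the paper does---is to estimate the trace difference directly on $\Sigma$: using $\pepm|_{\Sigma}=\pem(\cdot,\pm\epsilon)$ and the fundamental theorem of calculus in $x_n$,
\[
\big\|\pepm|_{\Sigma}-\bpem\big\|_{L^2(\Sigma)}\le C\sqrt{\epsilon}\,\|\partial_{x_n}\pem\|_{L^2(\oem)}.
\]
Then Cauchy--Schwarz on $\Sigma$ together with the trace bound $\|D^{\pm}\nabla c_0^{\pm}\cdot\nu^{\pm}\|_{L^2(\Sigma)}\le C\|c_0^{\pm}\|_{H^2(\Omega^{\pm})}$ (which you stated correctly) finishes the proof. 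In other words, you need the \emph{trace} version of the scaled Poincar\'e inequality (point value minus average), not the \emph{volume} version; the $\sqrt{\epsilon}$ then comes out in a single step without any bookkeeping of three separate $\epsilon$-powers.
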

\begin{proof}
The fundamental theorem of calculus and $\pepm|_{\Sigma} = \pem|_{\sepm}$ imply
\begin{align*}
\big\|\pepm|_{\Sigma} - \bpem \big\|_{L^2(\Sigma)} \le C \sqrt{\epsilon} \|\partial_n \pem \|_{L^2(\oem)} \le C \sqrt{\epsilon} \big\|\nabla \pem \big\|_{L^2(\oem)}.
\end{align*}
Now, the regularity of $c_0^{\pm}$ implies
\begin{align*}
\int_{\Sigma} D^{\pm} \nabla c_0^{\pm} \cdot \nu^{\pm} \big(\pepm - \bpem \big) d\sigma &\le C \big\|\nabla c_0^{\pm }\cdot \nu^{\pm } \big\|_{L^2(\Sigma)} \big\|\pepm|_{\Sigma} - \bpem \big\|_{L^2(\Sigma)} 
\\
&\le C \sqrt{\epsilon} \|c_0^{\pm}\|_{H^2(\Omega^{\pm})} \big\|\nabla \pem\big\|_{L^2(\oem)}.
\end{align*}
\end{proof}

\noindent\textbf{Estimates for the nonlinear terms in $\eqref{AuxiliaryEquation}$:}

Let us estimate now the differences including the nonlinear terms in $\eqref{AuxiliaryEquation}$. We start with the following auxiliary Lemma:
\begin{lemma}\label{AuxiliaryDivergenceEquation}
Let $c_0^M \in L^2((0,T), H^1(\Sigma))$ and let us define $\eta \in L^2((0,T)\times \Sigma,L^{\infty}(Z))$ by 
\begin{align*}
\eta(t,\x,y) := g^M\big(t,y,c_0^M(\x)\big) - \frac{1}{|Z|} \int_Z g^M\big(t,z,c_0^M(\x) \big) dz .
\end{align*}
There exists $G \in L^2((0,T),H^1_{\#}(\Sigma,W^{1,p}(Z)))$ for $p\in (1,\infty)$ arbitrary large, such that
\begin{align*}
\nabla_y \cdot G &= \eta  &\mbox{ in }& (0,T) \times \Sigma \times Z,
\\
G\cdot \nu &= 0 &\mbox{ on }& (0,T)\times \Sigma \times S^{\pm}.
\end{align*}
Especially, the following estimate holds:
\begin{align*}
\|G\|_{L^2\big((0,T),H^1\big(\Sigma,C^0\big(\overline{Z}\big)\big)\big) } \le C \left(1 + \|c_0^M\|_{L^2((0,T),H^1(\Sigma))}\right).
\end{align*}
\end{lemma}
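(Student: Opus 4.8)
The plan is to solve the divergence equation $\nabla_y \cdot G = \eta$ cell-wise in $y \in Z$, for each fixed $(t,\x)$, via an auxiliary scalar potential, and then track the dependence on $(t,\x)$ through the regularity of $c_0^M$. First I would fix $(t,\x)$ and note that $\eta(t,\x,\cdot)$ has zero mean over $Z$ by construction and is bounded (indeed in $L^\infty(Z)$, since $g^M$ is continuous and $c_0^M(\x)$ is a fixed real number). Then I would let $\zeta(t,\x,\cdot) \in H^1_{\#}(Z)/\R$ be the unique weak solution of the Neumann problem $-\Delta_y \zeta = \eta$ in $Z$ with $\nabla_y \zeta \cdot \nu = 0$ on $S^{\pm}$ (solvability is guaranteed by the zero-mean condition), and set $G := -\nabla_y \zeta$. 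This gives $\nabla_y \cdot G = \eta$ and $G \cdot \nu = 0$ on $S^{\pm}$ immediately. By elliptic $L^p$-regularity for the Neumann problem on $Z$ (as used in Lemma \ref{CellProblemFirstOrderLayerExistence}, citing \cite[Chapter 2]{GrisvardEllipticProblems}), since the right-hand side $\eta$ is in $L^\infty(Z) \subset L^p(Z)$ for all $p < \infty$, we get $\zeta(t,\x,\cdot) \in W^{2,p}(Z)$ and hence $G(t,\x,\cdot) \in W^{1,p}(Z)$, with $\|G(t,\x,\cdot)\|_{W^{1,p}(Z)} \le C \|\eta(t,\x,\cdot)\|_{L^p(Z)}$.

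Next I would establish the regularity in the $(t,\x)$ variables. The key observation is that $\zeta$ depends on $(t,\x)$ only through the scalar $c_0^M(t,\x)$: more precisely, defining the solution operator $s \mapsto \Theta(t,\cdot)[s]$ that maps a real number $s$ to the solution of $-\Delta_y \Theta = g^M(t,\cdot,s) - \fint_Z g^M(t,z,s)\,dz$, we have $\zeta(t,\x,\cdot) = \Theta(t,\cdot)[c_0^M(t,\x)]$. Since $g^M$ is uniformly Lipschitz in its last variable (Assumption \ref{VoraussetzungenDuenneSchichtReaktionskinetikMembran}), the map $s \mapsto g^M(t,\cdot,s) - \fint g^M(t,z,s)dz$ is Lipschitz from $\R$ to $L^\infty(Z)$, uniformly in $t$, and hence by $L^p$-regularity $s \mapsto \Theta(t,\cdot)[s]$ is Lipschitz from $\R$ into $W^{2,p}(Z)$. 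Then the chain rule for Sobolev functions (composition of a Lipschitz map with an $H^1(\Sigma)$ function $c_0^M(t,\cdot)$) yields $G(t,\cdot,\cdot) \in H^1(\Sigma, W^{1,p}(Z))$ with $\nabla_{\x} G = -\nabla_{\x}(c_0^M) \cdot \partial_s(\nabla_y \Theta)$ in the appropriate sense, giving the pointwise-in-$t$ bound $\|G(t)\|_{H^1(\Sigma,W^{1,p}(Z))} \le C(1 + \|c_0^M(t)\|_{H^1(\Sigma)})$, where the additive constant absorbs the $L^2(\Sigma)$ part (bounding $\|G(t)\|_{L^2(\Sigma,W^{1,p}(Z))}$ uses $|g^M(t,y,s)| \le C(1+|s|)$). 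Integrating the square of this bound over $t \in (0,T)$ and using $c_0^M \in L^2((0,T),H^1(\Sigma))$ gives the claimed estimate in $L^2((0,T),H^1(\Sigma,C^0(\overline{Z})))$, after invoking the Sobolev embedding $W^{1,p}(Z) \hookrightarrow C^0(\overline{Z})$ for $p > n$.

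The main obstacle I anticipate is making the differentiation-in-$\x$ step fully rigorous — specifically, justifying that $c_0^M(t,\cdot) \mapsto \zeta(t,\cdot,\cdot)$ maps $H^1(\Sigma)$ into $H^1(\Sigma, W^{1,p}(Z))$ and that the chain rule applies. One cannot directly use a classical chain rule because $s \mapsto \Theta(t,\cdot)[s]$ need not be $C^1$ into $W^{2,p}(Z)$ — only Lipschitz. The clean way around this is to invoke the Banach-space version of the Stampacchia-type result: a globally Lipschitz map between Hilbert/Banach spaces composed with an $H^1$ function is again $H^1$, with the natural bound on the gradient (this is standard; one may also argue by difference quotients directly, using the Lipschitz bound $\|\Theta(t,\cdot)[s_1] - \Theta(t,\cdot)[s_2]\|_{W^{2,p}(Z)} \le L|s_1 - s_2|$ and the fact that difference quotients of $c_0^M$ are bounded in $L^2(\Sigma)$). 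Once this functional-analytic point is in place, all remaining steps are routine applications of elliptic regularity, Hölder's inequality, and the growth/Lipschitz bounds on $g^M$.
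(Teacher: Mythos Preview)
Your construction of $G = -\nabla_y \zeta$ via the Neumann problem $-\Delta_y\zeta = \eta$ on $Z$ is exactly what the paper does, and your $L^p$-regularity argument for the $y$-variable is identical. The difference lies in how the $H^1$-regularity in $\x$ is obtained: the paper works directly with difference quotients of $\eta$ in $\x$, computing $\partial_{x_i}\eta$ explicitly via the Lipschitz bound on $\partial_z g^M$ and then showing that $\partial_h^i\zeta$ solves the Neumann problem with right-hand side $\partial_h^i\eta$; you instead factor the dependence on $\x$ through the scalar $c_0^M(t,\x)$ and argue via Lipschitz composition. These are essentially the same argument once you unwind your Lipschitz-composition step into difference quotients (as you yourself suggest), so there is no real gap.

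Your factorization does buy something the paper has to work for: the $\Sigma$-periodicity of $G$ in $\x$. Since in your setup $G(t,\x,\cdot)$ depends on $\x$ only through $c_0^M(t,\x)$, and $c_0^M \in H^1_\#(\Sigma)$ is $\Sigma$-periodic, the periodicity of $G$ is immediate. The paper, by contrast, devotes a separate Step~2 to this, proving a commutation relation between the solution operator $L_\omega$ of the Neumann problem and the vector-valued trace operators on $\partial\Sigma$. You do not mention periodicity at all, so you should make this point explicit, but with your approach it is a one-line observation rather than an additional argument.
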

\begin{proof}
Here, the time variable has the role of an additional parameter and for an easier notation we suppress the time-dependence in the following.

\textit{\underline{Step 1} (Existence of a solution $G\in H^1(\Sigma,W^{1,p}(Z))$): }First of all, the function $\eta$ is an element of $L^2(\Sigma,L^{\infty}(Z))$, since we obtain from the growth condition from Assumption \ref{VoraussetzungenDuenneSchichtReaktionskinetikMembran} 
\begin{align*}
\|\eta\|_{L^2(\Sigma,L^{\infty}(Z))} \le C \left(1 + \|c_0^M\|_{L^2(\Sigma)}\right).
\end{align*}
Further, the mean value with respect to $y$ is obviously zero. Additionally, the regularity of $g^M$ implies
\begin{align*}
\partial_{x_i} \eta(\x,y) = \partial_z g^M\big(y,c_0^M(\x)\big) \partial_{x_i} c_0^M(\x) - \frac{1}{|Z|} \int_Z \partial_z g^M(z,c_0^M(\x))\partial_{x_i} c_0^M(\x) dz
\end{align*}
for $i=1,\ldots,n-1$ and almost every $(\x,y)\in \Sigma \times Z$. Since the derivative of a Lipschitz-function is bounded (independent of $y$, due to our assumptions), we obtain
\begin{align*}
\big\|\partial_{x_i} \eta \big\|_{L^{\infty}(Z,L^2(\Sigma))} \le C \|c_0^M\|_{H^1(\Sigma)},
\end{align*}
\ie $\eta \in L^{\infty}(Z,H^1(\Sigma))$. Now, let $\xi \in L^2(\Sigma,H_{\#}^1(Z)/\R)$ be the  unique solution of the following problem:
\begin{align*}
- \Delta_y \xi(\x,y) &= \eta(\x,y) &\mbox{ in }& \Sigma \times Z,
\\
-\nabla_y \xi(\x,y)\cdot \nu &= 0 &\mbox{ on }& \Sigma \times S^{\pm},
\\
\xi \mbox{ is } Y^{n-1}\mbox{-periodic and } &\int_Z \xi dy = 0.
\end{align*}
The $L^p$-theory for elliptic equations implies $\xi \in L^2(\Sigma,W^{2,p}(Z))$ for $p\in (1,\infty)$ arbitrary, and
\begin{align*}
\|\xi\|_{L^2(\Sigma,W^{2,p}(Z))} \le C \|\eta\|_{L^2(\Sigma,L^p(Z))} \le C\left(1 + \|c_0^M\|_{L^2(\Sigma)}\right).
\end{align*}
To prove regularity of $\xi$ with respect to $\x$, we use the method of difference quotients, see \cite[Section 7.11]{GilbargTrudingerEllipticEquations}. We define for $\psi \in L^2(\Sigma \times Z)$, $i\in \{1,\ldots,n-1\}$, and $h > 0$ the difference quotient by
\begin{align*}
\partial_h^i \psi(\x,y):= \frac{\psi(\x + he_i , y) - \psi(\x,y)}{h} \quad \mbox{ for } (\x,y) \in \Sigma_h \times Z,
\end{align*}
with $\Sigma_h := \{\x \in \Sigma \, : \, \mathrm{dist}(\x,\partial \Sigma)>h\}$. Then,  $\partial_h^i \xi$ is a solution of 
\begin{align*}
-\Delta_y \partial_h^i \xi(\x,y) &= \partial_h^i \eta(\x,y) &\mbox{ in }& \Sigma_h \times Z,
\\
-\nabla_y \partial_h^i \xi(\x,y) \cdot \nu &= 0 &\mbox{ on }& \Sigma_h \times S^{\pm},
\\
\partial_h^i \xi \mbox{ is } Y^{n-1}\mbox{-periodic and } &\int_Z \partial_h^i \xi dy = 0.
\end{align*}
By the same arguments as above we obtain $\partial_h^i \xi \in L^2(\Sigma_h,W^{2,p}(Z))$ and
\begin{align*}
\|\partial_h^i \xi\|_{L^2(\Sigma_h,W^{2,p}(Z))} \le C \|\partial_h^i \eta \|_{L^2(\Sigma_h,L^p(Z))} \le C \|\partial_{x_i} \eta \|_{L^2(\Sigma, L^p(Z))} \le C \|c_0^M\|_{H^1(\Sigma)}.
\end{align*}
The results from \cite[Section 7.11]{GilbargTrudingerEllipticEquations} extended to Banach valued functions implies $\xi \in H^1(\Sigma,W^{2,p}(Z))$. Especially, we can replace $\partial_h^i \xi$ in the problem above by $\partial_{x_i} \xi$.

Now, we define $G(\x,y) := -\nabla_y \xi(\x,y)$. This gives us the desired result for $G$, where the estimate follows from the continuous embedding $W^{1,p}(Z) \hookrightarrow C^0\big(\overline{Z}\big)$.

\textit{\underline{Step 2} (Periodicity of $G$ with respect to $\x$)}: We define the space of $L^2$-functions on $Z$ with mean value zero by $L^2_0(Z):= \left\{u\in L^2(Z)\, : \, \int_Z u dy = 0 \right\}$. Further, let $\omega = \Sigma$ or $\omega = \partial \Sigma$, and we define the linear operator
\begin{align*}
L_{\omega}: L^2(\omega,L_0^2(Z)) \rightarrow L^2(\omega,H^1_{\#}(Z)/\R),\quad L_{\omega}(\theta) = \xi,
\end{align*}
where $\xi $ is the unique weak solution of 
\begin{align*}
- \Delta_y \xi(\x,y) &= \theta(\x,y) &\mbox{ in }& \omega \times Z,
\\
-\nabla_y \xi(\x,y)\cdot \nu &= 0 &\mbox{ on }&  \omega \times  S^{\pm},
\\
\xi \mbox{ is } Y^{n-1}\mbox{-periodic and } &\int_Z \xi dy = 0.
\end{align*}
Existence and uniqueness follows as in Step 1 from the Lax-Milgram Lemma, which also implies the continuity of the operator $L_{\omega}$. We consider the following vector-valued trace operators (see \cite[Theorem 6.13]{ArendtKreuter2018})
\begin{align*}
T_{L_0^2(Z)}&: H^1(\Sigma,L_0^2(Z)) \rightarrow L^2(\partial \Sigma,L_0^2(Z)),
\\
T_{H_{\#}(Z)/\R} &: H^1(\Sigma,H_{\#}^1(Z)/\R) \rightarrow L^2(\partial \Sigma,H_{\#}^1(Z)/\R). 
\end{align*}
The claim is proved, if we show
\begin{align}\label{Commutation}
T_{H_{\#}(Z)/\R} \circ L_{\Sigma} = L_{\partial \Sigma} \circ T_{L_0^2(Z)} \quad \mbox{on } H^1(\Sigma,L_0^2(Z)).
\end{align}
With similar arguments as in Step 1, we obtain the regularity result
\begin{align*}
L_{\omega} \big(C^0(\overline{\omega},L_0^2(Z))\big) \subset C^0(\overline{\omega},H_{\#}(Z)/\R),
\end{align*}
hence, the identity $\eqref{Commutation}$ holds on $C^0(\overline{\Sigma},L_0^2(Z))$. The density of $C^0(\overline{\Sigma},L_0^2(Z))$ and the continuity of $L_{\omega}$ and the trace operators imply the desired result.
\end{proof}

\begin{proposition}\label{EstimateNonlinearTermsLayer}
Let $c_0^M\in L^2((0,T),H^1(\Sigma))$. Then it holds that
\begin{align*}
\foe &\int_{\oem} \gem(\cem)\pem dx - \int_{\Sigma} \int_Z g^M(t,y,c_0^M) \bpem dy d\x
\\ 
&\le \frac{C}{\epsilon} \big\|\cem - c_0^M\big\|_{L^2(\oem)} \|\pem\|_{L^2(\oem)} + C \sqrt{\epsilon} \left(1  + \|c_0^M\|_{H^1(\Sigma)}\right) \|\pem\|_{H^1(\oem)}.
\end{align*}
\end{proposition}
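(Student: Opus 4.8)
The plan is to split the claimed difference into a pointwise linearisation error and a homogenisation error. Extending $c_0^M$ constantly in the $x_n$-direction over $\oem$, write
\[
\foe\int_{\oem}\gem(\cem)\pem\,dx-\int_{\Sigma}\int_Z g^M(t,y,c_0^M)\bpem\,dy\,d\x=I_\epsilon+II_\epsilon,
\]
with $I_\epsilon:=\foe\int_{\oem}\big(\gem(\cem)-\gem(c_0^M)\big)\pem\,dx$ and $II_\epsilon$ the remaining difference. The term $I_\epsilon$ is controlled at once by $\tfrac{C}{\epsilon}\|\cem-c_0^M\|_{L^2(\oem)}\|\pem\|_{L^2(\oem)}$, using the uniform Lipschitz continuity of $g^M$ from \ref{VoraussetzungenDuenneSchichtReaktionskinetikMembran} and Cauchy--Schwarz; this accounts for the first term on the right-hand side.

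For $II_\epsilon$ I would use the decomposition $g^M(t,y,c_0^M(\x))=\eta(t,\x,y)+\tfrac{1}{|Z|}\int_Z g^M(t,z,c_0^M(\x))\,dz$ with $\eta$ as in Lemma \ref{AuxiliaryDivergenceEquation}. Since $\foe\int_{-\epsilon}^{\epsilon}\pem(\x,x_n)\,dx_n=2\bpem(\x)$ and $|Z|=2$, the mean-value part of the fast variable reproduces \emph{exactly} the target integral $\int_\Sigma\int_Z g^M(t,y,c_0^M)\bpem\,dy\,d\x$, so those cancel and $II_\epsilon$ reduces to $\foe\int_{\oem}\eta(t,\x,x/\epsilon)\pem\,dx$.

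The heart of the argument is to rewrite this last integral via $\eta=\nabla_y\cdot G$. Because $G$ has no slow dependence on the $n$-th variable, one has the pointwise identity $\foe\,\eta(t,\x,x/\epsilon)=\nabla_x\cdot\big(G(t,\x,x/\epsilon)\big)-(\nabla_{\x}\cdot G)(t,\x,x/\epsilon)$, where the first term is the full $x$-divergence of the composed field. Substituting and integrating by parts over $\oem$, the boundary integrals over $\sepm$ vanish because of the condition $G\cdot\nu=0$ on $\Sigma\times S^{\pm}$ in Lemma \ref{AuxiliaryDivergenceEquation} (which is imposed precisely for this), and the lateral boundary contributions cancel by the $\Sigma$-periodicity of $G$ (Step 2 of that lemma) and of $\pem$, together with $\epsilon^{-1}\in\N$. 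One is left with $-\int_{\oem}G(t,\x,x/\epsilon)\cdot\nabla\pem\,dx-\int_{\oem}(\nabla_{\x}\cdot G)(t,\x,x/\epsilon)\pem\,dx$. Since $\oem$ has thickness $2\epsilon$ and $G(t,\x,\cdot)\in C^0(\overline Z)$, one has $\|h(t,\cdot,\cdot/\epsilon)\|_{L^2(\oem)}\le C\sqrt\epsilon\,\|h\|_{L^2(\Sigma,C^0(\overline Z))}$ for $h\in\{G,\nabla_{\x}\cdot G\}$; combined with the pointwise-in-$t$ version of the estimate $\|G(t)\|_{H^1(\Sigma,C^0(\overline Z))}\le C(1+\|c_0^M(t)\|_{H^1(\Sigma)})$ from Lemma \ref{AuxiliaryDivergenceEquation}, both remaining terms are bounded by $C\sqrt\epsilon(1+\|c_0^M\|_{H^1(\Sigma)})\|\pem\|_{H^1(\oem)}$. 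Collecting the bounds for $I_\epsilon$ and $II_\epsilon$ gives the claim.

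The main obstacle is justifying the integration by parts and, in particular, the vanishing of the boundary terms on $\sepm$: this needs that $G$ has well-defined traces on the faces of $\oem$ (from $W^{1,p}(Z)\hookrightarrow C^0(\overline Z)$ for $p>n$), that its normal trace is zero on $S^{\pm}$, and that it is $\Sigma$-periodic in the slow variable — exactly the structural properties encoded in Lemma \ref{AuxiliaryDivergenceEquation}. Given that input, the gain of the factor $\sqrt\epsilon$ is merely the thin-layer measure estimate.
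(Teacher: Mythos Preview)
Your proposal is correct and follows essentially the same route as the paper: the same splitting into a Lipschitz part ($I_\epsilon = G_\epsilon^1$) and an oscillation part ($II_\epsilon = G_\epsilon^2$), the same use of Lemma~\ref{AuxiliaryDivergenceEquation} to write $\eta=\nabla_y\cdot G$, the same chain-rule identity to trade $\foe\nabla_y\cdot$ for $\nabla_x\cdot$ minus $\nabla_{\x}\cdot$, and the same handling of the boundary terms via $G\cdot\nu=0$ on $S^\pm$ and $\Sigma$-periodicity. Your explicit verification that the mean-value part reproduces the target integral (using $|Z|=2$ and the definition of $\bpem$) is a nice clarification that the paper leaves implicit.
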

\begin{proof}
We have
\begin{align*}
\foe &\int_{\oem} g^M\left(\fxe,\cem\right) \pem dx - \int_{\Sigma} \int_Z  g^M(y,c_0^M) \bpem dy d\x 
\\
=& \foe \int_{\oem} \left[ g^M\left(\fxe,\cem\right) - g^M\left(\fxe,c_0^M\right) \right] \pem dx 
\\
 &+\foe \int_{\oem} \left[ g^M\left(\fxe,c_0^M\right) - \frac{1}{|Z|} \int_Z g^M\big(y,c_0^M\big)dy \right] \pem dx =: G_{\epsilon}^1 + G_{\epsilon}^2.
\end{align*}
For the first term $G_{\epsilon}^1$ we use the uniform Lipschitz continuity of $g^M$ to obtain
\begin{align*}
G_{\epsilon}^1 \le& \frac{C}{\epsilon} \big\|\cem - c_0^M \big\|_{L^2(\oem)} \|\pem \|_{L^2(\oem)}.
\end{align*}
For the second term $G_{\epsilon}^2$ we use Lemma \ref{AuxiliaryDivergenceEquation} to obtain
\begin{align*}
G_{\epsilon}^2 =& \foe \int_{\oem} \eta\left(\x,\fxe\right) \pem dx  = \foe \int_{\oem} \nabla_y \cdot G\left(\x,\fxe\right) \pem dx
\\
=& \int_{\oem} \left[ -\big(\nabla_{\x} \cdot G\big)\left(\x,\fxe\right) + \nabla_x \cdot \left(G\left(\x,\fxe\right)\right)\right] \pem dx
\\
=& - \int_{\oem} \big(\nabla_{\x} \cdot G\big)\left(\x,\fxe\right) \pem dx - \int_{\oem} G\left(\x,\fxe\right) \cdot \nabla \pem   dx
\\
&+  \int_{\partial \oem} G\left(\x,\fxe\right) \cdot \nu \pem d\sigma =: F_{\epsilon}^1 + F_{\epsilon}^2 + F_{\epsilon}^3.
\end{align*}
The last term $F_{\epsilon}^3$ vanishes, due to the periodicity of $G$ and $G\cdot \nu = 0$ on $S^+ \cup S^-$. Further, the estimate from Lemma \ref{AuxiliaryDivergenceEquation} implies
\begin{align*}
F_{\epsilon}^2 &\le \left\|G\left(\x,\fxe\right)\right\|_{L^2(\oem)} \big\|\nabla \pem\big\|_{L^2(\oem)} \le C\sqrt{\epsilon} \|G\|_{L^2 (\Sigma, C^0 (\overline{Z} ) )} \big\|\nabla \pem \big\|_{L^2(\oem)}
\\
&\le C\sqrt{\epsilon} \big(1 + \big\|c_0^M\big\|_{H^1(\Sigma)}\big) \big\|\nabla \pem \big\|_{L^2(\oem)}.
\end{align*}
In a similar way, we can estimate $F_{\epsilon}^1$. 
\end{proof}

\begin{proposition}\label{EstimateNonlinearTermsBulk}
Let $c_0^{\pm} \in L^2((0,T),H^1(\Omega^{\pm}))$. Then it holds that
\begin{align*}
\int_{\Omega^{\pm}} &\fepm(\cepm) \pepm dx - \int_{\Omega^{\pm}} \int_{Y^n} f^{\pm} (y,c_0^{\pm}) \pepm dy dx
\\
&\le C  \big\|\cepm - c_0^{\pm}\big\|_{L^2(\Omega^{\pm})} \|\pepm\|_{L^2(\Omega^{\pm})} + C \epsilon \big(1 + \|c_0^{\pm}\|_{H^1(\Omega^{\pm})}\big) \|\pepm\|_{H^1( \Omega^{\pm})}.
\end{align*}
\end{proposition}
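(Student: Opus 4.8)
The plan is to mimic the structure of the proof of Proposition \ref{EstimateNonlinearTermsLayer}, but now in the bulk domains where the reaction term $\fepm(\cepm) = f^\pm(\fxe,\cepm)$ oscillates at scale $\epsilon$ and the target is the $Y^n$-averaged limit $\int_{Y^n} f^\pm(y,c_0^\pm)\,dy$. First I would split the difference into two pieces by inserting the intermediate term $\int_{\Omega^\pm} f^\pm(\fxe, c_0^\pm)\pepm\,dx$, writing
\begin{align*}
\int_{\Omega^{\pm}} f^{\pm}\!\left(\tfrac{x}{\epsilon},\cepm\right)\pepm dx - \int_{\Omega^{\pm}}\int_{Y^n} f^{\pm}(y,c_0^{\pm})\pepm dy dx =: F_\epsilon^1 + F_\epsilon^2,
\end{align*}
where $F_\epsilon^1 = \int_{\Omega^\pm}[f^\pm(\fxe,\cepm) - f^\pm(\fxe,c_0^\pm)]\pepm\,dx$ and $F_\epsilon^2 = \int_{\Omega^\pm}[f^\pm(\fxe,c_0^\pm) - \int_{Y^n} f^\pm(y,c_0^\pm)\,dy]\pepm\,dx$. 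The term $F_\epsilon^1$ is handled immediately by the uniform Lipschitz continuity of $f^\pm$ (Assumption \ref{VoraussetzungenDuenneSchichtReaktionskinetikBulk}) and the Cauchy--Schwarz inequality, giving $F_\epsilon^1 \le C\|\cepm - c_0^\pm\|_{L^2(\Omega^\pm)}\|\pepm\|_{L^2(\Omega^\pm)}$, which is the first term in the claimed estimate.

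The substantive part is $F_\epsilon^2$, the oscillatory average. Here I would introduce the corrected fluctuation field: set
\begin{align*}
\zeta(x,y) := f^{\pm}(x,y,c_0^{\pm}(x)) - \int_{Y^n} f^{\pm}(x,z,c_0^{\pm}(x))\,dz,
\end{align*}
which has zero mean over $Y^n$ for each $x$, and (by the Lipschitz bound on $f^\pm$ in its last argument and the chain rule, exactly as in Step 1 of Lemma \ref{AuxiliaryDivergenceEquation}) lies in $L^2(\Omega^\pm, L^\infty(Y^n)) \cap L^\infty(Y^n, H^1(\Omega^\pm))$ with $\|\zeta\|_{L^\infty(Y^n,H^1(\Omega^\pm))} \le C(1 + \|c_0^\pm\|_{H^1(\Omega^\pm)})$. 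Then I would invoke the periodic divergence-equation solvability result (the analogue of Lemma \ref{AuxiliaryDivergenceEquation}, but now on the full periodicity cell $Y^n$ with $Y^n$-periodic boundary conditions in all directions — this is the classical statement from \cite{JikovKozlovOleinik1994}) to obtain $\Phi \in H^1(\Omega^\pm, W^{1,p}_\#(Y^n))$ with $\nabla_y \cdot \Phi = \zeta$ and the bound $\|\Phi\|_{L^2(\Omega^\pm, C^0(\overline{Y^n}))} \le C(1 + \|c_0^\pm\|_{H^1(\Omega^\pm)})$. Writing $\zeta(\fxe) = \epsilon\,\nabla_x\cdot(\Phi(x,\fxe)) - \epsilon(\nabla_x\cdot\Phi)(x,\fxe)$ and integrating the first term by parts, the boundary term over $\partial\Omega^\pm$ splits into the lateral part (which vanishes by $Y^n$-periodicity of $\Phi$) and the parts over $\Sigma$ and $\Sigma\times\{\pm H\}$ — and here I need the cutoff: strictly, one should note that since we only need the estimate and not an exact identity, the surviving boundary contributions are controlled by $\epsilon\|\Phi\|_{L^2}\|\pepm\|_{L^2}$ via the trace theorem, or alternatively one multiplies $\Phi$ by a cutoff supported away from $\partial\Omega^\pm$ at no cost to the order. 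The interior terms are bounded by $C\epsilon\|\Phi\|_{L^2(\Omega^\pm,C^0)}\|\pepm\|_{H^1(\Omega^\pm)} \le C\epsilon(1+\|c_0^\pm\|_{H^1(\Omega^\pm)})\|\pepm\|_{H^1(\Omega^\pm)}$, which is precisely the second term in the claim. Adding the estimates for $F_\epsilon^1$ and $F_\epsilon^2$ finishes the proof.

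The main obstacle I anticipate is the correct treatment of the boundary terms arising from the integration by parts. Unlike the layer case in Proposition \ref{EstimateNonlinearTermsLayer}, where $G\cdot\nu = 0$ on $S^\pm$ by construction and the lateral boundaries cancel by periodicity, here on $\Omega^\pm$ the natural divergence-corrector $\Phi$ on the full cell $Y^n$ satisfies no such Neumann condition on the images of the cell faces that tile $\Sigma$ and $\Sigma\times\{\pm H\}$; these faces do not carry a scale separation that makes the boundary integral small automatically. The clean resolution — and the one I would adopt — is to absorb this by the $L^2(\Sigma)$-trace of $\pem$ (note $\pepm|_\Sigma = \pem|_{\sepm}$, so this couples back to the layer norm exactly as in Lemma \ref{ErrorEstimateNormalFlux}), or, more simply, to observe that only an $O(\epsilon)$ bound is required so a modest boundary contribution of size $\epsilon\|\Phi\|_{L^2(\partial\Omega^\pm)}\|\pepm\|_{L^2(\partial\Omega^\pm)}$ is acceptable. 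The estimate for $F_\epsilon^1$ and the Lipschitz/chain-rule regularity of $\zeta$ are entirely routine and mirror the corresponding steps already carried out for the layer.
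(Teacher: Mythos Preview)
Your proposal is correct and follows essentially the same route as the paper: the same splitting into $F_\epsilon^1$ (Lipschitz) and $F_\epsilon^2$ (oscillation), the same divergence corrector on $Y^n$ constructed \`a la Lemma~\ref{AuxiliaryDivergenceEquation}, and the same integration by parts. The boundary term you flag as the main obstacle is handled in the paper precisely by your last option: it is kept as a full integral over $\partial\Omega^\pm$ and estimated directly via the vector-valued trace inequality $\|F^\pm\|_{L^2(\partial\Omega^\pm,C^0(\overline{Y^n}))}\le C\|F^\pm\|_{H^1(\Omega^\pm,C^0(\overline{Y^n}))}$ together with $\|\pepm\|_{L^2(\partial\Omega^\pm)}\le C\|\pepm\|_{H^1(\Omega^\pm)}$, yielding the claimed $C\epsilon(1+\|c_0^\pm\|_{H^1(\Omega^\pm)})\|\pepm\|_{H^1(\Omega^\pm)}$ without any cutoff and without invoking the layer coupling $\pepm|_\Sigma=\pem|_{\sepm}$.
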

\begin{proof}
We argue in the same way as in the proof of Proposition \ref{EstimateNonlinearTermsLayer}. We have
\begin{align*}
\int_{\Omega^{\pm}} &\fepm(\cepm) \pepm dx - \int_{\Omega^{\pm}} \int_{Y^n} f^{\pm} (y,c_0^{\pm}) \pepm dy dx 
\\
&= \int_{\Omega^{\pm}} \left[f^{\pm}\left(\fxe,\cepm\right) - f^{\pm}\left(\fxe,c_0^{\pm}\right) \right] \pepm dx
\\
&\hspace{1em} + \int_{\Omega^{\pm}} \left[f^{\pm}\left(\fxe,c_0^M\right) - \int_{Y^n} f^{\pm}(y,c_0^{\pm})dy \right] \pepm dx
\\
&\le C \big\|\cepm - c_0^{\pm}\big\|_{L^2(\Omega^{\pm})} \|\pepm\|_{L^2(\Omega^{\pm})} + \int_{\Omega^{\pm}} \nabla_y \cdot F^{\pm}\left(x,\fxe\right) \pepm dx,
\end{align*}
where $F \in H^1(\Omega^{\pm},W^{1,p}(Z))$ for arbitrary large $p\in (1,\infty)$ fulfills (we again suppress the time-dependence)
\begin{align*}
\nabla_y \cdot F^{\pm}(x,y) = f^{\pm}(y,c_0^{\pm}) - \int_{Y^n} f^{\pm}(z,c_0^{\pm}) dz &\quad \mbox{in } \Omega^{\pm}\times Y^n,
\end{align*}
and the estimate
\begin{align*}
\|F^{\pm}\|_{H^1(\Omega^{\pm},C^0(\overline{Y^n}))} \le C \left(1 + \|c_0^{\pm}\|_{H^1(\Omega^{\pm})} \right).
\end{align*}
The existence and the regularity of $F^{\pm}$ can be established in the same way as in Lemma \ref{AuxiliaryDivergenceEquation}. However, we emphasize that we do not require specific boundary conditions for $F^{\pm}$ on $\partial \Omega^{\pm}$. We obtain
\begin{align*}
\int_{\Omega^{\pm}} \nabla_y \cdot F^{\pm } \left(x,\fxe\right) \pepm dx &= -\epsilon \int_{\Omega^{\pm}} (\nabla_x \cdot F^{\pm}) \left(x,\fxe\right) \pepm dx - \epsilon \int_{\Omega^{\pm}} F^{\pm}\left(x,\fxe\right) \cdot \nabla \pepm dx
\\
& + \epsilon \int_{\partial \Omega^{\pm}} F^{\pm}\left(x,\fxe\right) \cdot \nu \pepm d\sigma.
\end{align*}
We only consider the boundary term in more detail. The vector valued trace inequality implies
\begin{align*}
\epsilon \int_{\partial \Omega^{\pm}} F^{\pm}\left(x,\fxe\right) \cdot \nu \pepm d\sigma &\le \epsilon \|F^{\pm}\|_{L^2(\partial \Omega^{\pm},C^0(\overline{Y^n}))} \|\pepm\|_{L^2(\partial \Omega^{\pm})}
\\
&\le C\epsilon \big(1 + \|c_0^{\pm}\|_{H^1(\Omega^{\pm})}\big) \|\pepm\|_{H^1( \Omega^{\pm})}
\end{align*}
\end{proof}

\noindent\textbf{Estimate for $\Delta_{\epsilon,T}$}

Now, we estimate the term $\Delta_{\epsilon,T}$, where according to $\eqref{DeltaEpsT}$ we use the following notations for the included terms:
\begin{align*}
\Delta_{\epsilon,T}=:\sum_{\pm} \left[ \Delta_{\epsilon,T^{\pm,\bl}}\right] + \Delta_{\epsilon,T^{M,1}} + \Delta_{\epsilon,T^{M,2}}.
\end{align*}
The mean idea is to represent solenoidal vector fields by the divergence of skew-symmetric matrices and integrate by parts. This gives an additional factor $\epsilon$. This approach has been used in  \cite[Section 4.2]{JikovKozlovOleinik1994} for vector fields on $Y^n$, periodic in all directions, and \cite{neuss2001boundary} for boundary layers. In our case, we have to construct skew-symmetric matrices adapted to the structure of our problem. More precisely, these matrices have to be such that boundary terms which occur at the interfaces between the bulk domains and the thin layer vanish.

We start with the estimate for  the term $\Delta_{\epsilon,T^{M,1}}$. Therefore, we make use of  the following Lemma:

\begin{lemma}
\label{SkewSymmetricTensorOrderOneLayer}
Let $h=(h_1,\ldots,h_n) \in L^p(Z)^n$ for $1<p<\infty$ with 
\begin{align*}
\nabla_y \cdot h &= 0 &\mbox{ in }& Z,
\\
h\cdot \nu &= 0 &\mbox{ on }& S^+\cup S^-,
\\
h \mbox{ is } Y^{n-1}&\mbox{-periodic}, \, \int_Z h dy = 0.
\end{align*}
This means, that for all $\phi \in W^{1,p'}(Z)$ which are $Y^{n-1}$-periodic ($p'$ the dual exponent of $p$) it holds that
\begin{align*}
\int_Z h \cdot \nabla_y \phi dy = 0.
\end{align*}
Then, there exists a skew-symmetric tensor $\big(\beta_{il}\big) \in W^{1,p}(Z)^{n\times n}$ for $i,l=1,\ldots n$, such that
\begin{align*}
\sum_{l=1}^n \partial_{y_l} \beta_{il} &= h_i &\mbox{ in }& Z,
\\
\sum_{l=1}^n \nu_l \beta_{il} &= 0 &\mbox{ on }& S^+\cup S^-.
\end{align*} 
\end{lemma}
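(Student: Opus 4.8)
The plan is to build the skew-symmetric tensor $(\beta_{il})$ componentwise by solving a family of scalar Neumann problems on $Z$, mimicking the classical construction of Jikov--Kozlov--Oleinik but with the boundary condition on $S^+\cup S^-$ built in from the start. First I would introduce, for each pair $i,l$ with $i<l$, an auxiliary function $\gamma_{il}$ solving a suitable elliptic problem whose gradient encodes the ``mixed'' components $h_i$, and then set $\beta_{il}=-\beta_{li}=\partial_{y_?}\gamma_{il}-\partial_{y_?}\gamma_{?}$ in the appropriate antisymmetric combination; concretely, the standard device is to let $\Delta_y v_i = h_i$ in $Z$ with Neumann data chosen so that $\partial_\nu v_i$ matches $h\cdot\nu = 0$ on $S^\pm$, which is solvable precisely because $\int_Z h_i\,dy=0$ and (after testing with constants) the compatibility condition holds. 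Then one defines $\beta_{il}:=\partial_{y_l}v_i-\partial_{y_i}v_l$, which is manifestly skew-symmetric, lies in $W^{1,p}(Z)$ by the $L^p$-elliptic regularity theory for the Neumann problem (as invoked already in Lemma \ref{CellProblemFirstOrderLayerExistence}), and satisfies $\sum_l\partial_{y_l}\beta_{il}=\sum_l\partial_{y_l}\partial_{y_i}v_l - \Delta_y v_i = \partial_{y_i}(\nabla_y\cdot v) - h_i$; so I additionally need $\nabla_y\cdot v$ to be harmonic, or better, to arrange $\nabla_y\cdot v \equiv 0$. The cleanest route is to impose that each $v_i$ solves $\Delta_y v_i = h_i$ with the vector $v=(v_1,\dots,v_n)$ itself divergence-free; equivalently, solve for $v$ the overdetermined-looking system $\Delta v = h$, $\nabla\cdot v = 0$, which is consistent because $\nabla\cdot h = 0$, and whose solvability on the cylinder $Z$ with the periodicity in the first $n-1$ variables and Neumann-type conditions on $S^\pm$ I would establish by a Helmholtz/Hodge-type decomposition of $h$.

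The key steps, in order, are: (1) record that the hypotheses say exactly that $h$ is $L^2$-orthogonal (hence $L^p$--$L^{p'}$ orthogonal) to all $Y^{n-1}$-periodic gradients on $Z$, i.e.\ $h$ is a ``solenoidal'' field with vanishing normal trace on $S^\pm$ and zero mean; (2) by this orthogonality, the linear functional $\phi\mapsto \int_Z h\cdot\nabla_y\phi\,dy$ vanishes, which is the compatibility condition letting us solve the vector Neumann problem $\Delta_y v = h$ in $Z$, $\nabla_y v\cdot\nu = 0$ on $S^\pm$ componentwise — but we instead solve the curl-potential problem so that $v$ is divergence-free; (3) invoke interior-plus-boundary $L^p$ elliptic regularity (Grisvard, as in the proof of Lemma \ref{CellProblemFirstOrderLayerExistence}) to get $v\in W^{2,p}_\#(Z)^n$, hence $\beta_{il}=\partial_{y_l}v_i-\partial_{y_i}v_l\in W^{1,p}(Z)$; (4) verify $\sum_l\partial_{y_l}\beta_{il}=h_i$ using $\nabla_y\cdot v = 0$; (5) check the boundary identity $\sum_l\nu_l\beta_{il}=0$ on $S^\pm$: since $\nu=\pm e_n$ there, this reduces to $\beta_{in}=\partial_{y_n}v_i-\partial_{y_i}v_n = 0$ on $S^\pm$, which follows because $\partial_{y_n}v_i$ is controlled by the Neumann data for $v_i$ (equal to $h_i=0$ on $S^\pm$ when $i<n$) together with $v_n=0$ or $\partial_{y_n}v_n=0$ there; the precise choice of boundary conditions on $v$ must be made so that both $\sum_l\partial_{y_l}\beta_{il}=h_i$ and the boundary relation hold simultaneously, and this bookkeeping is where care is required.

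The main obstacle I anticipate is step (5) together with the simultaneous requirement in step (2)/(4): one must choose the boundary conditions for the potential $v$ on $S^+\cup S^-$ so that, on the one hand, the vector Neumann/Dirichlet problem defining $v$ is well-posed and regular up to the boundary (so that traces of $\partial_{y_n}v_i$ make sense and $\beta\in W^{1,p}$), and, on the other hand, the combination $\partial_{y_n}v_i-\partial_{y_i}v_n$ vanishes on $S^\pm$ — this is exactly the adaptation of the classical Jikov--Kozlov--Oleinik skew-symmetric tensor (which used full periodicity in all $n$ directions, so no boundary appeared) to the cylinder geometry with interfaces, and is the novelty flagged in the introduction. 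I would resolve it by exploiting the product structure $Z=Y^{n-1}\times(-1,1)$: Fourier-expand $h$ in the $Y^{n-1}$-variable, solve ODEs in $y_n$ for each mode with the mean-zero and $h\cdot\nu=0$ constraints, and define $\beta$ mode-by-mode so that the $S^\pm$ condition is imposed directly; the $L^p$-bound then follows from the $L^p$ theory applied to the full problem. The remaining verifications — skew-symmetry, the divergence identity, and the Sobolev regularity — are routine once the construction is fixed.
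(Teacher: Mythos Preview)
Your framework is exactly the paper's: set $\beta_{il}=\partial_{y_l}v_i-\partial_{y_i}v_l$ with $\Delta v_i=h_i$, check skew-symmetry and $W^{1,p}$-regularity via elliptic $L^p$-theory, and reduce the claim to $\nabla_y\cdot v=0$. But the step you flag as ``where care is required'' is precisely the content of the lemma, and you leave it unresolved. The paper does \emph{not} impose $\nabla\cdot v=0$ a priori via a Helmholtz decomposition, nor does it pass to Fourier modes. Instead it commits to a specific \emph{mixed} boundary condition: homogeneous Neumann $\partial_{y_n}\zeta_i=0$ on $S^\pm$ for $i=1,\dots,n-1$, and homogeneous \emph{Dirichlet} $\zeta_n=0$ on $S^\pm$. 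With this choice, $\beta_{in}=\partial_{y_n}\zeta_i-\partial_{y_i}\zeta_n=0$ on $S^\pm$ is immediate (the first term by Neumann, the second because tangential derivatives of a function vanishing on $S^\pm$ vanish there). The remaining task is to show $v:=\nabla_y\cdot\zeta=0$, and the paper does this by a direct computation: one checks $\int_Z v\,dy=0$ (using $\zeta_n=0$ on $S^\pm$), and then that $\int_Z\nabla v\cdot\nabla\phi\,dy=0$ for all periodic test functions $\phi$, where the boundary terms from integration by parts cancel \emph{exactly because} of the mixed Neumann/Dirichlet choice together with $\int_Z h\cdot\nabla\phi=0$. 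Hence $v$ solves the homogeneous periodic Neumann problem with zero mean, so $v=0$.

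Two smaller points. First, your parenthetical ``$\partial_{y_n}v_i$ is controlled by the Neumann data for $v_i$ (equal to $h_i=0$ on $S^\pm$ when $i<n$)'' is confused: the hypothesis gives only $h\cdot\nu=h_n=0$ on $S^\pm$, not $h_i=0$ for $i<n$; the Neumann data for $v_i$ is simply chosen to be zero, not inherited from $h_i$. Second, your divergence computation has a sign slip: with $\beta_{il}=\partial_{y_l}v_i-\partial_{y_i}v_l$ one gets $\sum_l\partial_{y_l}\beta_{il}=\Delta v_i-\partial_{y_i}(\nabla\cdot v)=h_i-\partial_{y_i}(\nabla\cdot v)$, not the other way around.
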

\begin{proof}
We define 
\begin{align*}
\beta_{il} := \partial_{y_l} \zeta_i - \partial_{y_i} \zeta_l,
\end{align*}
where $\zeta_i$ is the solution of 
\begin{align*}
\Delta \zeta_i &= h_i &\mbox{ in }& Z,
\\
\nabla \zeta_i \cdot \nu &= 0 &\mbox{ on }& S^+\cup S^- \mbox{ for } i=1,\ldots,n-1,
\\
\zeta_n &=0 &\mbox{ on }& S^+\cup S^-,
\\
\zeta_i \mbox{ is } Y^{n-1}&\mbox{-periodic for } i=1,\ldots,n,
\\
\int_Z \zeta_i dy &= 0 \mbox{ for } i=1,\ldots,n-1.
\end{align*}
From the $L^p$-theory for elliptic equations we obtain $\zeta_i \in W^{2,p}(Z)$ and therefore $\beta_{il} \in W^{1,p}(Z)$. Obviously, the boundary conditions for $\zeta_i$ on $S^+\cup S^-$ imply $\sum_{l=1}^n \nu_l \beta_{il} = \beta_{in}= 0$. Further, we have
\begin{align*}
\sum_{l=1}^n \partial_{y_l} \beta_{il} = \Delta \zeta_i - \sum_{l=1}^n \partial_{y_i y_l} \zeta_l = h_i - \partial_{y_i} \nabla_y \cdot \zeta,
\end{align*}
where we defined $\zeta:= (\zeta_1,\ldots,\zeta_n)$. We show that $v:= \nabla_y \cdot \zeta  =0 $. First of all, we have $v$ is $Y^{n-1}$-periodic and 
\begin{align*}
\int_Z v dy  = \int_{\partial Z} \zeta \cdot \nu d\sigma = \int_{S^+ \cup S^- } \zeta_n \nu_n d\sigma = 0, 
\end{align*}
due to the zero-boundary condition of $\zeta_n$ on $S^+\cup S^-$. Further, for all $Y^{n-1}$-periodic $\phi \in C^{\infty}\big(\overline{Z}\big)$ we have
\begin{align*}
\int_Z \nabla v \cdot \nabla \phi dy &= \sum_{i=1}^n \int_Z \partial_{y_i} \big(\nabla \zeta_i \cdot \nabla \phi \big) - \nabla \zeta_i \cdot \nabla (\partial_{y_i} \phi ) dy 
\\
&= \sum_{i=1}^n \int_{\partial Z} \nu_i \nabla \zeta_i \cdot \nabla \phi d\sigma - \sum_{i=1}^n \int_Z \nabla \zeta_i \cdot \nabla (\partial_{y_i} \phi ) dy
\\
&= \sum_{i,j=1}^n \int_{\partial Z} \nu_i \partial_{y_j} \zeta_i \partial_{y_j} \phi d\sigma  + \sum_{i=1}^n \int_Z \Delta \zeta_i \partial_{y_i} \phi dy  - \sum_{i,j=1}^n \int_{\partial Z} \nu_j \partial_{y_j} \zeta_i \partial_{y_i} \phi d\sigma
\\
&=: I_1 + I_2 - I_3.
\end{align*} 
For the second term we obtain from the properties of $h$ and the definition of $\zeta_i$
\begin{align*}
I_2= \int_Z h \cdot \nabla \phi dy = 0.
\end{align*}
Further, the periodicity and the Dirichlet-zero boundary condition of $\zeta_n$ imply 
\begin{align*}
I_1 &= \sum_{i,j=1}^n \int_{S^+ \cup S^-} \nu_i \partial_{y_j} \zeta_i \partial_{y_j} \phi d\sigma = \sum_{j=1}^n \int_{S^+ \cup S^-} \nu_n \partial_{y_j} \zeta_n  \partial_{y_j} \phi d\sigma 
\\
&= \int_{S^+\cup S^- } \nu_n \partial_{y_n} \zeta_n \partial_{y_n} \phi d\sigma.
\end{align*}
Using the Neumann-boundary conditions for $\zeta_i$ for $i=1,\ldots,n-1$, we get by similar arguments
\begin{align*}
I_3 = \sum_{i=1}^n \int_{S^+ \cup S^-} \nu_n \partial_{y_n} \zeta_i \partial_{y_i} \phi d\sigma = \int_{S^+ \cup S^-} \nu_n \partial_{y_n} \zeta_n \partial_{y_n} \phi d\sigma = I_1.
\end{align*}
Altogether, we obtain
\begin{align*}
\int_Z \nabla v \cdot \nabla \phi d y = 0,
\end{align*}
and therefore $v$ satisfies 
\begin{align*}
-\Delta v &= 0 &\mbox{ in }& Z,
\\
-\nabla v \cdot \nu &= 0 &\mbox{ on }& S^+ \cup S^-,
\\
v \mbox{ is } Y^{n-1}\mbox{-periodic and }& \int_Z v dy = 0.
\end{align*}
This implies $v= 0$ and the proof is complete.
\end{proof}

\begin{lemma}\label{ErrorEstimateDeltaTMOne}
Let $c_0^M\in L^2((0,T),H^2(\Sigma))$. Then it holds that
\begin{align*}
\Delta_{\epsilon,T^{M,1}} \le C\sqrt{\epsilon}\|c_0^M\|_{H^2(\Sigma)} \|\nabla \pem\|_{L^2(\oem)}.
\end{align*}
\end{lemma}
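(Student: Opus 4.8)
The plan is to represent, for each tangential index $i\in\{1,\dots,n-1\}$, the $i$-th column of the tensor $T^{M,1}$, that is the vector field $h^i:=(T^{M,1}_{ki})_{k=1,\dots,n}$ on $\overline Z$, as the row-divergence of a skew-symmetric matrix via Lemma \ref{SkewSymmetricTensorOrderOneLayer}, and then to integrate by parts so as to trade the prefactor $\foe$ in $\Delta_{\epsilon,T^{M,1}}$ (see \eqref{DeltaEpsT}) for a gain of one power of $\epsilon$, while keeping only $\|\nabla\pem\|_{L^2(\oem)}$ on the right-hand side.

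\emph{Step 1 --- verifying the hypotheses of Lemma \ref{SkewSymmetricTensorOrderOneLayer}.} Fix $i$. Using the definition of $T^{M,1}$ in \eqref{TMone} together with the cell problem \eqref{CellProblemFirstOrderLayer} for $w_{i,1}^M$, one checks that $h^i$ is $Y^{n-1}$-periodic with $\nabla_y\cdot h^i=0$ in $Z$ and $h^i\cdot\nu=0$ on $S^+\cup S^-$: the divergence vanishes because $\nabla_y\cdot(D^M\nabla_y w_{i,1}^M)=-\nabla_y\cdot(D^Me_i)$ by the equation in \eqref{CellProblemFirstOrderLayer}, and the normal trace vanishes because $[D^M(\nabla_y w_{i,1}^M+e_i)]_n=0$ on $S^\pm$ by the flux condition there, the constant matrix $D^{M,\ast}$ contributing nothing since its $n$-th row is zero by convention. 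For the zero-mean condition I would test the weak form of \eqref{CellProblemFirstOrderLayer} for $w_{i,1}^M$ with $w_{k,1}^M$ and use the symmetry of $D^M$ to obtain $|Z|\,D^{M,\ast}_{ik}=\int_Z\big(D^M_{ik}+\sum_j D^M_{jk}\partial_{y_j}w_{i,1}^M\big)\,dy$; comparing with the definition of $T^{M,1}_{ki}$ gives $\int_Z T^{M,1}_{ki}\,dy=0$. Lemma \ref{SkewSymmetricTensorOrderOneLayer} then yields a skew-symmetric $\beta^i\in W^{1,p}(Z)^{n\times n}$, for $p$ arbitrarily large, with $\sum_l\partial_{y_l}\beta^i_{kl}=T^{M,1}_{ki}$ in $Z$ and $\sum_l\nu_l\beta^i_{kl}=0$ on $S^\pm$, and the embedding $W^{1,p}(Z)\hookrightarrow C^0(\overline Z)$ gives $\|\beta^i\|_{L^\infty(Z)}\le C$.

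\emph{Step 2 --- integration by parts and estimate.} Since $T^{M,1}_{ki}(\fxe)=\epsilon\sum_l\partial_{x_l}\big[\beta^i_{kl}(\fxe)\big]$, the factor $\foe$ cancels and
\[
\Delta_{\epsilon,T^{M,1}}=\sum_{i=1}^{n-1}\sum_{k,l=1}^{n}\int_{\oem}\partial_{x_l}\big[\beta^i_{kl}(\fxe)\big]\,\partial_{x_i}c_0^M\,\partial_{x_k}\pem\,dx.
\]
Integrating by parts in $x_l$ produces a boundary integral over $\partial\oem$, which vanishes --- on $\sepm$ because $\sum_l\nu_l\beta^i_{kl}(\fxe)=0$ there, and on the lateral boundary by $\Sigma$-periodicity of $\beta^i(\fxe)$, $c_0^M$ and $\pem$ --- together with the term $-\sum_{i,k,l}\int_{\oem}\beta^i_{kl}(\fxe)\,\partial_{x_i}c_0^M\,\partial_{x_lx_k}\pem\,dx$, which vanishes because for each $i$ the tensor $\beta^i$ is skew-symmetric in $(k,l)$ while $\partial_{x_lx_k}\pem$ is symmetric, and the surviving term $-\sum_{i,k,l}\int_{\oem}\beta^i_{kl}(\fxe)\,\partial_{x_lx_i}c_0^M\,\partial_{x_k}\pem\,dx$. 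In this last term only $l\le n-1$ contributes, since $c_0^M$ is independent of $x_n$, so $\partial_{x_lx_i}c_0^M$ is a second tangential derivative and $\|\partial_{x_lx_i}c_0^M\|_{L^2(\oem)}=\sqrt{2\epsilon}\,\|\partial_{x_lx_i}c_0^M\|_{L^2(\Sigma)}\le\sqrt{2\epsilon}\,\|c_0^M\|_{H^2(\Sigma)}$. Combining this with $\|\beta^i\|_{L^\infty(Z)}\le C$, $\|\partial_{x_k}\pem\|_{L^2(\oem)}\le\|\nabla\pem\|_{L^2(\oem)}$, and the finiteness of the sums, the Cauchy--Schwarz inequality yields $\Delta_{\epsilon,T^{M,1}}\le C\sqrt\epsilon\,\|c_0^M\|_{H^2(\Sigma)}\,\|\nabla\pem\|_{L^2(\oem)}$ for a.e.\ $t\in(0,T)$.

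The main obstacle is Step 1: showing that $h^i$ is solenoidal with vanishing normal flux and zero mean requires invoking the equation, the flux boundary condition and the weak formulation of \eqref{CellProblemFirstOrderLayer} together with the definitions of $D^{M,\ast}$ and $T^{M,1}$ in a matched way, and these compatibility relations are precisely what makes the scheme close. The skew-symmetry of $\beta^i$ in Step 2 is equally essential, since it removes the term containing $\nabla^2\pem$, which could not be controlled by $\|\nabla\pem\|_{L^2(\oem)}$ alone.
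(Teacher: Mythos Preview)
Your proposal is correct and follows essentially the same route as the paper: apply Lemma \ref{SkewSymmetricTensorOrderOneLayer} to each column $h^i=(T^{M,1}_{ki})_k$, replace $T^{M,1}_{ki}(\fxe)$ by $\epsilon\sum_l\partial_{x_l}[\beta^i_{kl}(\fxe)]$, integrate by parts, and use the skew-symmetry of $\beta^i$ to kill the $\partial_{x_lx_k}\pem$ term while the boundary contributions vanish by periodicity and by $\beta^i_{kn}=0$ on $S^\pm$. One small remark: your zero-mean argument via testing \eqref{CellProblemFirstOrderLayer} with $w_{k,1}^M$ covers only $k\le n-1$; for $k=n$ note that $\int_{Y^{n-1}}[D^M(\nabla w_{i,1}^M+e_i)]_n(\bar y,t)\,d\bar y$ is independent of $t$ (integrate the equation over $Y^{n-1}\times(-1,t)$) and vanishes at $t=\pm1$ by the flux condition, whence $\int_Z h^i_n\,dy=0$ as well.
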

\begin{proof}
We have
\begin{align*}
\Delta_{\epsilon,T^{M,1}}= \foe \sum_{i=1}^{n-1} \sum_{j=1}^n \int_{\oem} T^{M,1}_{ji}\left(\fxe\right)\partial_{x_i} c_0^M \partial_{x_j} \pem dx.
\end{align*}
Let us define for fixed $i \in \{1,\ldots,n-1\}$ the function $h=(h_1,\ldots,h_n)$ by $h_j(y) := T_{ji}^{M,1}(y)$. An elemental calculation shows that $h$ fulfills the assumptions of Lemma \ref{SkewSymmetricTensorOrderOneLayer}. Hence, there exists for every $i\in \{1,\ldots,n-1\}$ a skew symmetric tensor $\big(\beta_{jl}^i\big) \in W^{1,p}(Z)$ (for $p\in (1,\infty)$ arbitrary large), such that $\sum_{l=1}^n \partial_{y_l} \beta_{jl}^i = T_{ji}^{M,1}$ in $Z$ and $\beta_{jn}^i = 0$ on $S^+ \cup S^-$. This implies by integration by parts
\begin{align*}
\Delta_{\epsilon,T^{M,1}} &= \foe \sum_{i=1}^{n-1} \sum_{j,l=1}^n \int_{\oem} \partial_{y_l} \beta_{jl}^i\left(\fxe\right) \partial_{x_i} c_0^M \partial_{x_j} \pem dx
\\
&= -\sum_{i=1}^{n-1} \sum_{j,l=1}^n \int_{\oem}  \beta_{jl}^i \left(\fxe\right) \left[ \partial_{x_l x_i} c_0^M \partial_{x_j}\pem + \partial_{x_i} c_0^M \partial_{x_l x_j}\pem \right] dx
\\
& \hspace{2em} + \sum_{i=1}^{n-1} \sum_{j,l=1}^n \int_{\partial \oem} \beta_{jl}^i\left(\fxe\right) \nu_l \partial_{x_i} c_0^M \partial_{x_j} \pem d\sigma.
\end{align*}
The boundary term vanishes due to the boundary conditions of $c_0^M$  and $\pem$ on the lateral boundary, and the boundary condition of $\beta_{jn}^i$ on $S^+ \cup S^-$. The terms including the derivatives $\partial_{x_l x_j} \pem$ vanish due to the skew-symmetry of $\beta_{jl}^i$ and the symmetry of the Hesse-matrix of $\pem$. Then, the regularity of $\beta_{jl}^i$ implies
\begin{align*}
\Delta_{\epsilon,T^{M,1}} = -\sum_{i=1}^{n-1} \sum_{j,l=1}^n \int_{\oem}  \beta_{jl}^i \left(\fxe\right) \partial_{x_l x_i} c_0^M \partial_{x_j}\pem dx \le C\sqrt{\epsilon}\|c_0^M\|_{H^2(\Sigma)} \|\nabla \pem\|_{L^2(\oem)}.
\end{align*}
\end{proof}

It remains to estimate the term $\sum_{\pm} \Delta_{\epsilon,T^{\pm,\bl}} + \Delta_{\epsilon,T^{M,2}}$. 
First of all we define for $-\infty <a < b < \infty$
\begin{align*}
\omega(a,b):=  (0,1)^{n-1} \times (a,b),
\end{align*}
and the space
\begin{align*}
V:= \left\{u \in L_{\loc}^2(\overline{\zi}) \, : \, \nabla u \in L^2(Z_{\infty}) ,\, u \mbox{ is } Y^{n-1}\mbox{-periodic}\right\},
\end{align*}
where $L_{\loc}^2(\overline{\zi})$ denotes the space of functions belonging to $L^2(U)$ for every $U\subset \zi$ such that $\overline{U}$ is compact. In a similar way we define other  Sobolev spaces which are integrable locally on $\zi$. For $u\in V$ we define the mean value of $u$ over $Y^{n-1} \times \{s\}$ for $s \in \R$ by
\begin{align*}
\u (s):= \int_{Y^{n-1}} u(\y,s) d\y.
\end{align*}
On $V$ we have the following weighted Poincar\'e-type inequality (see \cite[Prop. 1.3]{JaegerMikelic1996})
\begin{align}\label{PoincareWeighted}
\left\|\frac{1}{1 + |y_n|} \big( u - \u(0)\big)\right\|_{L^2(\zi)} \le C \|\nabla u \|_{L^2(\zi)}.
\end{align}
Therefore, the space
\begin{align*}
V_0:= \left\{u \in V \, : \, \u(0) = 0\right\}
\end{align*}
becomes a Hilbert space with respect to the inner product 
\begin{align*}
(u,v)_{V_0} := \int_{\zi} \nabla u \cdot \nabla v dy.
\end{align*}

\begin{lemma}\label{ExistenceLaplaceWeightedSpace}
Let $(1 +|y_n|) h_i \in L^2(\zi) $ with $\int_{\zi} h_i dy = 0$ for $i=1,\ldots,n$. Then there exists a unique weak solution $u \in V_0$ of the problem
\begin{align}
\label{LaplaceInfiniteStripe}
\begin{aligned}
-\Delta u &= h_i \mbox{ in } \zi,
\\
\u(0) &= 0 \mbox{ and } u \mbox{ is } Y^{n-1}\mbox{-periodic}.
\end{aligned}
\end{align}
\end{lemma}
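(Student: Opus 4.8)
The plan is to obtain the statement from the Riesz representation theorem on the Hilbert space $(V_0,(\cdot,\cdot)_{V_0})$. First I would make the notion of weak solution precise: $u\in V_0$ is a weak solution of $\eqref{LaplaceInfiniteStripe}$ if
\[
\int_{\zi}\nabla u\cdot\nabla v\,dy=\int_{\zi}h_i v\,dy\qquad\text{for all }v\in V .
\]
The compatibility condition $\int_{\zi}h_i\,dy=0$ ensures that the right-hand side is unchanged when $v$ is replaced by $v-\u(0)$; since $v-\u(0)\in V_0$ whenever $v\in V$, it suffices to verify this identity for test functions in $V_0$, which is the natural setting for Riesz.

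Second, I would verify that the linear functional $L(v):=\int_{\zi}h_i v\,dy$ is well defined and bounded on $V_0$. For $v\in V_0$ we have $\u(0)=0$, so the weighted Poincar\'e inequality $\eqref{PoincareWeighted}$ gives $\big\|(1+|y_n|)^{-1}v\big\|_{L^2(\zi)}\le C\|\nabla v\|_{L^2(\zi)}=C\|v\|_{V_0}$. Writing $h_i v=\big((1+|y_n|)h_i\big)\big((1+|y_n|)^{-1}v\big)$ and applying the Cauchy--Schwarz inequality yields
\[
|L(v)|\le\big\|(1+|y_n|)h_i\big\|_{L^2(\zi)}\,\big\|(1+|y_n|)^{-1}v\big\|_{L^2(\zi)}\le C\big\|(1+|y_n|)h_i\big\|_{L^2(\zi)}\,\|v\|_{V_0},
\]
which in particular shows $h_i v\in L^1(\zi)$, so $L$ is a bounded linear functional on $V_0$.

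Third, the bilinear form in the weak formulation is precisely the inner product $(\cdot,\cdot)_{V_0}$ of the Hilbert space $V_0$ (whose completeness is itself a consequence of $\eqref{PoincareWeighted}$, as noted before the statement). Hence the Riesz representation theorem furnishes a unique $u\in V_0$ with $(u,v)_{V_0}=L(v)$ for all $v\in V_0$, and by the reduction of the first step this $u$ is the sought weak solution. For uniqueness, the difference $w\in V_0$ of two solutions satisfies $\|\nabla w\|_{L^2(\zi)}^2=0$, so $w$ is constant, and $\u(0)=0$ forces $w=0$.

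I expect no essential obstacle: the only nontrivial point is the boundedness of $L$, which is exactly what the weighted Poincar\'e inequality $\eqref{PoincareWeighted}$ on the unbounded strip $\zi$ is tailored to provide, the weight $1+|y_n|$ compensating for the merely $L^2$-with-weight (rather than compactly supported) data $h_i$. This solvability result is the infinite-strip counterpart of the elliptic problems solved in the proof of Lemma $\ref{SkewSymmetricTensorOrderOneLayer}$, and it will be used to construct the skew-symmetric tensors needed to estimate the boundary-layer terms $\Delta_{\epsilon,T^{\pm,\bl}}$.
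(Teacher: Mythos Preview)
Your proposal is correct and follows essentially the same approach the paper indicates: the paper's proof simply states that it is based on the weighted Poincar\'e inequality $\eqref{PoincareWeighted}$ and refers to \cite[Prop.~1.5]{JaegerMikelic1996}, whose argument is precisely the Riesz/Lax--Milgram construction on $V_0$ that you carry out. Your handling of the compatibility condition $\int_{\zi}h_i\,dy=0$ to reduce testing from $V$ to $V_0$, and the use of $\eqref{PoincareWeighted}$ to bound the linear functional, are exactly the expected ingredients.
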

\begin{proof}
The proof is based on the inequality $\eqref{PoincareWeighted}$ and can be found in \cite[Prop. 1.5]{JaegerMikelic1996}.
\end{proof}
Next we show additional regularity results for the solution $u$ from Lemma \ref{ExistenceLaplaceWeightedSpace} under additional assumptions on $h_i$. We use similar methods as in \cite{JaegerMikelic1996,Oleinik1992}. However, for the sake of completeness we give the detailed proof for our case.

\begin{lemma}\label{RegularityLaplaceWeighted}
For $\gamma >0$ let $e^{\gamma | y_n|} h_i \in L^2(\zi)$, $\int_{\zi } h_i dy = 0$, and $h_i\in L_{\loc}^p(\overline{\zi})$ for $p>n$ such that for every $s \in \R$ it holds that
\begin{align}\label{LocalConditionhi}
\|h_i\|_{L^p(\omega(s,s+1))} \le C,
\end{align}
with a constant $C >0$ independent of $s$.
 Then the solution $u$ from Lemma \ref{ExistenceLaplaceWeightedSpace} fulfills $u\in L_{\loc}^2(\overline{\zi})$ with $\|u\|_{L^2(\omega(s , s+ 1))} \le C$ uniformly with respect to $s$. Especially we have $u\in W^{2,p}_{\loc}(\overline{\zi})$ and $u \in C^1(\overline{\zi})$.
\end{lemma}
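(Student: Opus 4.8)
\textbf{Overall strategy.}
The plan is to combine an interior $L^p$-estimate with a global energy bound. The global energy bound from Lemma \ref{ExistenceLaplaceWeightedSpace} controls $\nabla u$ in $L^2(\zi)$, and together with the weighted Poincar\'e inequality \eqref{PoincareWeighted} it controls $u-\u(0)=u$ in a weighted $L^2$-space. From there one extracts the uniform-in-$s$ bound $\|u\|_{L^2(\omega(s,s+1))}\le C$ by exploiting the exponential decay of $h_i$ to show that the weighted norm forces uniform boundedness on unit slices. Once $u\in L^2(\omega(s,s+1))$ uniformly, the interior $L^p$-regularity theory for the Laplacian (applied on slightly enlarged slices, e.g.\ $\omega(s-1,s+2)$, using periodicity in the first $n-1$ variables and the local integrability hypothesis \eqref{LocalConditionhi} on $h_i$) upgrades this to a uniform $W^{2,p}$-bound on $\omega(s,s+1)$; Morrey's embedding for $p>n$ then gives $u\in C^1(\overline{\zi})$.

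\textbf{Key steps, in order.}
First I would recall from Lemma \ref{ExistenceLaplaceWeightedSpace} that $u\in V_0$ with $\|\nabla u\|_{L^2(\zi)}\le C\,\|(1+|y_n|)h\|_{L^2(\zi)}$, and note that the exponential weight $e^{\gamma|y_n|}$ dominates the polynomial weight $(1+|y_n|)$, so the right-hand side is finite and controlled. Second, via \eqref{PoincareWeighted}, $\left\|\frac{u}{1+|y_n|}\right\|_{L^2(\zi)}\le C\|\nabla u\|_{L^2(\zi)}\le C$. Third, on a fixed slice $\omega(s,s+1)$ the weight $1+|y_n|$ is comparable to $1+|s|$, so $\|u\|_{L^2(\omega(s,s+1))}\le C(1+|s|)$; to remove the growth I would instead argue on slices near infinity using the exponential decay of $h_i$, which forces the Dirichlet energy on $\{|y_n|>R\}$ to decay, hence $\|\nabla u\|_{L^2(\omega(s,s+1))}\to 0$ as $|s|\to\infty$, and then a slice-wise Poincar\'e inequality (controlling $u$ on $\omega(s,s+1)$ by its mean on $Y^{n-1}\times\{s\}$ plus the local gradient, together with the decay of the sequence of means $\u(s)$) yields the uniform bound $\|u\|_{L^2(\omega(s,s+1))}\le C$. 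Fourth, with $u$ uniformly bounded in $L^2$ on unit slices and $-\Delta u=h_i$ with $\|h_i\|_{L^p(\omega(s,s+1))}\le C$, the standard interior estimate \cite[Theorem 9.11]{GilbargTrudingerEllipticEquations} applied on $\omega(s-1,s+2)$ (using $Y^{n-1}$-periodicity to turn the lateral boundary into an interior situation) gives $\|u\|_{W^{2,p}(\omega(s,s+1))}\le C$ uniformly in $s$. Fifth, Morrey's inequality for $p>n$ converts this into a uniform $C^{1,\alpha}$-bound on each slice, hence $u\in C^1(\overline{\zi})$.

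\textbf{Main obstacle.}
The delicate point is the third step: passing from the weighted $L^2$-bound, which only gives $\|u\|_{L^2(\omega(s,s+1))}=O(1+|s|)$, to a genuinely uniform bound. This requires using the exponential decay of $h_i$ (not just the polynomial weight) to show that both the slice-wise Dirichlet energy and the sequence of slice-means $\u(s)$ decay as $|s|\to\infty$, so that the growing factor $1+|s|$ is compensated. This is exactly the kind of argument carried out in \cite{JaegerMikelic1996,Oleinik1992}, and adapting it carefully to the present geometry of $\zi=Y^{n-1}\times\R$ is where the real work lies; the remaining regularity bootstrap is routine elliptic theory.
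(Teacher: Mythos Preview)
Your overall architecture matches the paper's: uniform $L^2$-control on unit slices, then interior $L^p$-estimates plus Morrey. You also correctly isolate the hard point, namely upgrading the weighted bound $\|u\|_{L^2(\omega(s,s+1))}\le C(1+|s|)$ to a bound independent of $s$, which amounts to bounding the slice means $\u(s)$.

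However, your sketch for this step is not a proof. Knowing that $\nabla u\in L^2(\zi)$ (hence $\|\nabla u\|_{L^2(\omega(s,s+1))}\to 0$) does \emph{not} by itself control $\u(s)$: from $\u'\in L^2(\R)$ you only get $|\u(s)|=O(\sqrt{|s|})$, and no amount of slice-wise Poincar\'e recovers uniform boundedness from that. The equation has to be used, and this is precisely what the paper does. Averaging $-\Delta u=h_i$ over $Y^{n-1}$ and testing against a cutoff $\phi_s$ yields
\[
\partial_n\u(\xi)\;=\;-\int_{\omega(\xi,\infty)} h_i\,dy\qquad(\xi>0),
\]
where the boundary term at infinity vanishes because $\nabla u\in L^2(\zi)$. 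Integrating from $0$ to $s$ and using $\u(0)=0$ together with $e^{\gamma|y_n|}h_i\in L^2(\zi)$ gives
\[
|\u(s)|\;\le\;\int_0^s \big\|e^{\gamma y_n}h_i\big\|_{L^2(\omega(\xi,\infty))}\,\big\|e^{-\gamma y_n}\big\|_{L^2(\omega(\xi,\infty))}\,d\xi\;\le\;C\big\|e^{\gamma y_n}h_i\big\|_{L^2(\zi)},
\]
uniformly in $s$ (and symmetrically for $s<0$). Then your slice-wise Poincar\'e argument goes through verbatim:
\[
\|u\|_{L^2(\omega(s,s+1))}\le \|u-\u(s)\|_{L^2(\omega(s,s+1))}+|\u(s)|\le C\big(\|\nabla u\|_{L^2(\zi)}+\|e^{\gamma|y_n|}h_i\|_{L^2(\zi)}\big).
\]
So the missing ingredient is this explicit ODE computation for $\u$; once you insert it, the rest of your outline (interior $W^{2,p}$-estimate via \cite[Theorem 9.11]{GilbargTrudingerEllipticEquations} using periodicity, then Morrey) is exactly the paper's conclusion.
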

\begin{proof}
The $L^p$-theory of elliptic operators implies  $u \in W_{\loc}^{2,p}(\overline{\zi})$. Now, we prove ($Y^+ = Y^{n-1} \times (0,\infty)$)
\begin{align*}
\partial_n \u (0) = \int_{Y^+} h_idy .
\end{align*}
In fact, let $s>0$ and $\phi_s \in C^{\infty}(\R)$ with $0 \le \phi_s \le 1$, $\phi_s = 0 $ in $[s +1,\infty)$, $\phi = 1 $ in $(-\infty,s)$, and $\|\phi'\|_{\infty}\le 2$. Then by testing $\eqref{LaplaceInfiniteStripe}$ with $\phi_s$ and integrating over $Y^+$, we obtain
\begin{align*}
-\partial_n \u(0) = \int_{\omega(s,s+1)} \partial_n u \phi_s'dy - \int_{Y^+}h_i\phi_s dy \overset{s\to \infty}{\longrightarrow} - \int_{Y^+} h_i dy,
\end{align*}
since $\nabla u \in L^2(\zi)$. Using again the equation $-\Delta u = h_i$, we get for $\xi > 0$ 
\begin{align*}
\partial_n \u(\xi) = -\partial_n \u(0) + \int_{\omega(0,\xi)} h_i dy = - \int_{\omega(\xi,\infty)} h_idy.
\end{align*}
Integration with respect to $\xi$ from $0 $ to $s>0$ implies ($\u(0) = 0$)
\begin{align*}
|\u(s)| &= \left| \int_0^{s} \int_{\omega(\xi,\infty)} h_i dy d\xi \right| \le \int_0^{s} \big\|e^{\gamma \xi} h_i \big\|_{L^2(\omega(\xi,\infty))} \big\|e^{-\gamma \xi} \big\|_{L^2(\omega(\xi,\infty))} d\xi
\\
&\le C \big\|e^{\gamma y_n} h_i \big\|_{L^2(\zi)}
\end{align*}
for a constant $C>0$ independent of $s$. Now, the Poincar\'e-inequality implies for every $s\geq 0$
\begin{align*}
\|u\|_{L^2(\omega(s,s+1))} &\le \|u - \u(s)\|_{L^2(\omega(s,s+1))} + |\u(s)|
\\
&\le C \left( \|\nabla u \|_{L^2(\zi)} + \big\|e^{\gamma y_n} h_i\big\|_{L^2(\zi)}\right).
\end{align*}
The same arguments hold for $s<0$, what implies $u\in L_{\loc}^2(\overline{\zi})$. The claim follows from Theorem 8.17, 9.11, and 8.32 from \cite{GilbargTrudingerEllipticEquations}.
\end{proof}

Now we are able to construct the skew-symmetric tensors corresponding to the error terms $\Delta_{\epsilon,T^{\pm,\bl}}$ and $\Delta_{\epsilon,T^{M,2}}$.

\begin{lemma}\label{SkewSymmetricTensorBoundaryLayer}
For $\gamma > 0$ let $e^{\gamma y_n}h\in L^2(\zi)^n$ and $h \in L^p_{\loc}(\overline{\zi})^n$ for $p>n$ such that $\eqref{LocalConditionhi}$ holds for all $s \in \R$, and
\begin{align*}
\nabla \cdot h &= 0 \mbox{ in } \zi, 
\\
h \mbox{ is } Y^{n-1}&\mbox{-periodic and } \int_{\zi} h dy   = 0.
\end{align*}
More precisely, the conditions $\nabla \cdot h = 0$ and $h$ is $Y^{n-1}$-periodic means that for all $\phi \in C_{0,\#}^{\infty}(\zi)$ it holds that 
\begin{align*}
\int_{\zi} h \cdot \nabla \phi dy = 0.
\end{align*}
Then there exists a $Y^{n-1}$-periodic, skew symmetric tensor $\big(\beta_{kl}\big) \in L^2(\zi) \cap W^{1,p}_{\loc}(\overline{\zi})\cap C^0(\overline{\zi})$, such that
\begin{align}\label{DivergenceEquationBeta}
\sum_{l=1}^n \partial_l \beta_{kl} = h_k \quad \mbox{in } \zi.
\end{align}
\end{lemma}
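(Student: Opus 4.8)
The plan is to imitate the construction of the skew-symmetric tensor in Lemma~\ref{SkewSymmetricTensorOrderOneLayer}, but now on the bi-infinite strip $\zi$, replacing the $L^p$-theory on the bounded cell $Z$ by the weighted existence and regularity theory of Lemmas~\ref{ExistenceLaplaceWeightedSpace} and~\ref{RegularityLaplaceWeighted}. First I would observe that the decay assumption on $h$ gives $(1+|y_n|)h_k \in L^2(\zi)$ for every $k$, so that together with $\int_{\zi} h_k\,dy = 0$ the hypotheses of Lemma~\ref{ExistenceLaplaceWeightedSpace} are met; it supplies a unique $\zeta_k \in V_0$ with $-\Delta \zeta_k = h_k$ in $\zi$, which is $Y^{n-1}$-periodic and satisfies $\overline{\zeta_k}(0)=0$. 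Since moreover $h_k \in L^p_{\loc}(\overline{\zi})$ with the uniform local bound $\eqref{LocalConditionhi}$, Lemma~\ref{RegularityLaplaceWeighted} upgrades this to $\zeta_k \in W^{2,p}_{\loc}(\overline{\zi}) \cap C^1(\overline{\zi})$, while $\zeta_k \in V_0$ already yields $\nabla\zeta_k \in L^2(\zi)$. I would then define
\[
\beta_{kl} := \partial_k\zeta_l - \partial_l\zeta_k , \qquad k,l = 1,\dots,n .
\]
This tensor is manifestly skew-symmetric and $Y^{n-1}$-periodic, it belongs to $W^{1,p}_{\loc}(\overline{\zi}) \cap C^0(\overline{\zi})$ because each $\zeta_k$ belongs to $W^{2,p}_{\loc}(\overline{\zi}) \cap C^1(\overline{\zi})$, and it belongs to $L^2(\zi)$ because $\nabla\zeta_k \in L^2(\zi)$.

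It then remains to verify the divergence identity $\eqref{DivergenceEquationBeta}$. Differentiating, one obtains
\[
\sum_{l=1}^n \partial_l\beta_{kl} = \partial_k\Big(\sum_{l=1}^n\partial_l\zeta_l\Big) - \Delta\zeta_k = \partial_k v + h_k , \qquad v := \sum_{l=1}^n\partial_l\zeta_l = \nabla\cdot\zeta ,
\]
so the whole statement reduces to showing that the auxiliary function $v$ vanishes identically in $\zi$. I expect this to be the main obstacle: unlike in the bounded case of Lemma~\ref{SkewSymmetricTensorOrderOneLayer}, there is no boundary of $\zi$ at which to extract information about $v$, so the argument must instead exploit that the $\zeta_k$ live in the energy space $V_0$. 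Concretely, $v \in L^2(\zi)$ since each $\partial_l\zeta_l \in L^2(\zi)$, and $v$ is $Y^{n-1}$-periodic.

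To finish, I would check that $v$ is weakly harmonic on the whole strip: for any $\phi \in C_{0,\#}^{\infty}(\zi)$, integrating by parts twice — the boundary contributions vanishing thanks to the compact support of $\phi$ in the $y_n$-direction and to $Y^{n-1}$-periodicity in the transversal directions, exactly as in the proof of Lemma~\ref{SkewSymmetricTensorOrderOneLayer} — one gets
\[
\int_{\zi}\nabla v\cdot\nabla\phi\,dy = \sum_{i=1}^n\int_{\zi}\Delta\zeta_i\,\partial_i\phi\,dy = -\int_{\zi}h\cdot\nabla\phi\,dy = 0 ,
\]
the last equality being precisely the weak divergence-free condition imposed on $h$. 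Thus $v$ is a $Y^{n-1}$-periodic, $L^2(\zi)$ weak solution of $-\Delta v = 0$ on $\zi$; by interior elliptic regularity it is smooth, and expanding $v$ in a Fourier series with respect to the periodic variables shows that each mode $\widehat v_\xi(y_n)$ is a linear combination of $e^{\pm\mu_\xi y_n}$ when $\xi\neq 0$ and an affine function of $y_n$ when $\xi=0$, none of which lies in $L^2(\R)$ unless it vanishes. Hence $v\equiv 0$, which gives $\sum_{l=1}^n\partial_l\beta_{kl}=h_k$ and completes the proof. As an alternative to the Fourier computation, one could invoke the uniqueness statement behind Lemma~\ref{ExistenceLaplaceWeightedSpace} applied to $v$.
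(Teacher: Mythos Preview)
Your construction of $\zeta_k$ via Lemma~\ref{ExistenceLaplaceWeightedSpace} and of $\beta_{kl}=\partial_k\zeta_l-\partial_l\zeta_k$ is exactly the route taken in the paper (up to a harmless sign convention), and your regularity bookkeeping for $\beta_{kl}\in L^2(\zi)\cap W^{1,p}_{\loc}(\overline{\zi})\cap C^0(\overline{\zi})$ is correct, relying on $\nabla\zeta_k\in L^2(\zi)$ from $V_0$ and on the $W^{2,p}_{\loc}\cap C^1$ upgrade of Lemma~\ref{RegularityLaplaceWeighted}. The reduction to showing $\nabla v=0$ for $v=\nabla\cdot\zeta$ is also the same.

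Where you diverge from the paper is in the final step. The paper does not test with compactly supported $\phi$ and then appeal to a Liouville theorem; instead it inserts a cutoff $\psi\in C_0^\infty(-R,R)$, computes $\int_{Z_R}\psi\,\nabla v\cdot\nabla\phi$ for general $\phi$, extends by density to $\phi=v$, and shows the resulting remainder $A_R(v)$ (supported in $Z_R\setminus Z_{R-1}$) tends to zero because $\nabla\zeta_n$, $h$ are globally $L^2$ and hence small on far strips. This yields $\|\nabla v\|_{L^2(\zi)}^2=0$ directly. Your alternative --- $v\in L^2(\zi)$, $v$ weakly harmonic, Fourier in the periodic variables forcing each mode to vanish --- is correct and arguably cleaner, trading the explicit cutoff computation for a Liouville-type argument; the paper's approach has the advantage of staying entirely within the energy framework already set up and avoids any appeal to spectral decomposition. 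Your remark about invoking uniqueness in Lemma~\ref{ExistenceLaplaceWeightedSpace} is less immediate, since membership of $v$ in $V_0$ (in particular $\bar v(0)=0$) is not obvious a priori, so the Fourier route is the sound choice among your two options.
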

\begin{proof}
From Lemma \ref{ExistenceLaplaceWeightedSpace} we obtain the existence of a function $\zeta_k $ such that $\Delta \zeta_k = h_k $ in $\zi$. Now, we define $\beta_{kl}:= \partial_l \zeta_k - \partial_k \zeta_l$ for $l,k=1,\ldots,n$. The regularity of $\beta_{kl}$ follows immediately from Lemma \ref{RegularityLaplaceWeighted} and we only have to check the   claim that $\eqref{DivergenceEquationBeta}$ holds. The definition of $\beta_{kl}$ implies (as in the proof of Lemma \ref{SkewSymmetricTensorOrderOneLayer})
\begin{align*}
\sum_{l=1}^n \partial_l \beta_{kl} = \Delta \zeta_k - \partial_k (\nabla \cdot \zeta) = h_k - \partial_k v,
\end{align*}
with $\zeta := (\zeta_1,\ldots,\zeta_n)$ and $v:= \nabla \cdot \zeta$.  Then, $\eqref{DivergenceEquationBeta}$ follows if we show $\nabla v = 0 $.

First of all, for $R>0$ we define $Z_R:= Y^{n-1} \times (-R,R)$ and $\psi \in C_0^{\infty}(-R,R)$ with $0 \le \psi \le 1$, $\psi = 1 $ in $[-R + 1 ,R -1]$, and $||\psi'||_{\infty}\le 2$. For every $\phi \in C_{0,\#}^{\infty}(\zi)$ we obtain
\begin{align*}
\int_{Z_R} \psi(y_n) \nabla v \cdot \nabla \phi dy &=  \sum_{i=1}^n \int_{Z_R} \psi(y_n) \left( \partial_i \big(\nabla \zeta_i \cdot \nabla \phi  \big) - \nabla \zeta_i \cdot \nabla \big(\partial_i \phi \big) \right) dy
\\
&= \sum_{i=1}^n \bigg[ - \int_{Z_R} \delta_{in} \psi'(y_n) \nabla \zeta_i \cdot \nabla \phi dy + \int_{\partial Z_R} \psi(y_n) \nabla \zeta_i \cdot \nabla \phi \nu_i d\sigma
\\
&\hspace{3em} + \int_{Z_R} \psi(y_n)\Delta \zeta_i \partial_i \phi dy -  \int_{\partial Z_R} \psi(y_n) \nabla \zeta_i \cdot \nu \partial_i \phi dy
\\
&\hspace{3em} + \int_{Z_R} \psi'(y_n)e_n \cdot\nabla \zeta_i   \partial_i \phi dy\bigg]
\end{align*}
The boundary terms vanish, due to the periodicity of $\zeta_i$ and $\phi$, as well as the compact support of $\psi $ in $(-R,R)$. Using $\Delta \zeta_i = h_i$ and $\nabla \cdot h = 0$, we get \begin{align*}
\int_{Z_R} \psi(y_n) \nabla v \cdot \nabla \phi dy &= - \int_{Z_R} \psi'(y_n) \big(\nabla \zeta_n \cdot \nabla \phi  + h_n \phi \big) dy 
\\
& \hspace{2em} +   \int_{Z_R} h \cdot \nabla \big(\phi \cdot \psi\big) dy + \sum_{i=1}^n \int_{Z_R} \psi'(y_n) e_n \cdot \nabla \zeta_i \partial_i \phi dy  
\\
&= \int_{Z_R} \psi'(y_n) \left( \sum_{i=1}^n \partial_n \zeta_i \partial_i \phi- \nabla \zeta_n \cdot \nabla \phi - h_i \phi \right) dy =: A_R(\phi).
\end{align*}
By a density argument this result is also valid for  $\phi = v$, and we obtain for $R\to \infty$ from the monotone convergence theorem:
\begin{align*}
\big\|\nabla v \big\|^2_{L^2(\zi)} = \lim_{R\to \infty} A_R(v).
\end{align*}
It remains to show $A_R(v) \to 0$ for $R\to \infty$. We illustrate this for one term in $A_R(v)$. From the H\"older-inequality we obtain
\begin{align*}
\left|\int_{Z_R} \psi'(y_n) h_i v dy \right| &= \left|\int_{Z_R \setminus Z_{R-1}} \psi'(y_n) h_i v dy  \right| 
\\
& \le C \|h_i\|_{L^2(Z_R \setminus Z_{R-1})} \|v\|_{L^2(Z_R \setminus Z_{R-1})} \overset{R \to \infty}{\longrightarrow} 0.
\end{align*}
This proves the claim.
\end{proof}

\begin{lemma}\label{ErrorEstimateDeltaTpmblTMtwo}
Let $c_0^M \in L^2((0,T),H^2(\Sigma))$ and $c_0^{\pm } \in L^2((0,T),H^2(\Omega^{\pm}))$. Then it holds that
\begin{align*}
\Delta_{\epsilon,T^{M,2}} + \sum_{\pm} \Delta_{\epsilon,T^{\pm,\bl}} \le C \bigg(\sqrt{\epsilon} \|c_0^M\|_{H^2(\Sigma)} \big\|\nabla \pem\big\|_{L^2(\oem)} + \epsilon \sum_{\pm} \|c_0^{\pm} \|_{H^2(\Omega^{\pm})} \big\|\nabla \pepm \big\|_{L^2(\Omega^{\pm})}\bigg)
\end{align*}
\end{lemma}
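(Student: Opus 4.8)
\textbf{Proof plan for Lemma \ref{ErrorEstimateDeltaTpmblTMtwo}.}

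The plan is to treat the three contributions $\Delta_{\epsilon,T^{M,2}}$ and $\Delta_{\epsilon,T^{\pm,\bl}}$ \emph{together}, because they are coupled: the cell problem \eqref{CellProblemSecondOrderLayerM} for $w_{j,2}^M$ has a nonzero Neumann datum on $S^{\pm}$ coming from the normal flux of the boundary-layer correctors $w_{j,1}^{\pm,\bl}$, and these two boundary terms are designed to cancel when one glues the strip problems to the cell problem. Concretely, for a fixed index $i \in \{1,\dots,n-1\}$ I would assemble a single vector field $h^i$ living on the \emph{glued} infinite strip $Z_\infty$, obtained by placing $T^{M,2}_{\cdot i}$ (extended by the tensor built from $D^{\pm}$ and $w^{\pm,\bl}_{i,1}$) on the two half-strips $Y^{\pm}$ and on the finite cell $Z$ in between. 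One first checks, by an elementary computation using the defining PDEs \eqref{CellProblemSecondOrderLayerM} and \eqref{BoundaryLayerBulk}, that $h^i$ is divergence-free in the sense of the excerpt (i.e. $\int_{Z_\infty} h^i \cdot \nabla\phi\, dy = 0$ for all $Y^{n-1}$-periodic $\phi \in C^\infty_{0,\#}(Z_\infty)$): inside each region this is the cell/strip equation, and across $S^{\pm}$ the jump in the normal component is exactly the mismatch in the Neumann data, which vanishes by construction of \eqref{CellProblemSecondOrderLayerM}. One also needs the exponential decay $e^{\gamma y_n} h^i \in L^2(Z_\infty)$, which follows from the exponential decay of $\nabla w_{i,1}^{\pm,\bl}$ in Lemma \ref{BoundaryLayerBulkExistence}, and the uniform local $L^p$ bound \eqref{LocalConditionhi}, which follows from the $W^{1,\infty}$ bounds on $w^{\pm,\bl}_{i,1}$ and $w^M_{i,2}$ together with the Lipschitz bound on $D^M$.

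With $h^i$ in hand, Lemma \ref{SkewSymmetricTensorBoundaryLayer} gives a $Y^{n-1}$-periodic skew-symmetric tensor $\big(\beta^i_{kl}\big) \in L^2(Z_\infty) \cap W^{1,p}_{\loc}(\overline{Z_\infty}) \cap C^0(\overline{Z_\infty})$ with $\sum_l \partial_l \beta^i_{kl} = h^i_k$. I would then rewrite $\Delta_{\epsilon,T^{M,2}} + \sum_{\pm}\Delta_{\epsilon,T^{\pm,\bl}}$ as a sum over $i$ of integrals of the form $\int \partial_{x_i} c_0 \,\big(\sum_{k,l} \partial_{y_l}\beta^i_{kl}(x/\epsilon)\big)\partial_{x_k}\pepm\,dx$ (with $c_0$ equal to $c_0^M$ on the layer, $\psi c_0^{\pm}$ on the bulk, and recalling that the gradient of $c_0^M$ or $c_0^{\pm}$ only involves tangential derivatives so the index $i$ indeed runs to $n-1$). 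Substituting $\partial_{y_l}\beta^i_{kl}(x/\epsilon) = \epsilon\,\partial_{x_l}\big(\beta^i_{kl}(x/\epsilon)\big)$ and integrating by parts in $x$ produces the extra factor $\epsilon$; the skew-symmetry of $\beta^i$ against the symmetric Hessian of $\pepm$ kills the term with second derivatives of the test function, exactly as in the proof of Lemma \ref{ErrorEstimateDeltaTMOne}. The boundary terms on the lateral boundary vanish by $Y^{n-1}$-periodicity and the periodic boundary conditions on $c_0$ and $\pepm$; the boundary terms on $\Sigma\times\{\pm H\}$ vanish because of the cutoff $\psi \in C_0^\infty(-H,H)$; and, crucially, the would-be boundary terms on the interfaces $\sepm$ cancel because $\beta^i$ is continuous across $S^{\pm}$ (that is the whole point of gluing into one tensor on $Z_\infty$ rather than handling the regions separately).

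What remains is a Hölder/Cauchy–Schwarz estimate of $-\sum_i\sum_{k,l}\int \beta^i_{kl}(x/\epsilon)\,\partial_{x_l}\big(\partial_{x_i}c_0\big)\,\partial_{x_k}\pepm\, dx$. In the layer one uses the scaling identity $\|\beta^i_{kl}(\cdot/\epsilon)\|_{L^2(\oem)} \le C\sqrt{\epsilon}\,\|\beta^i_{kl}\|_{L^2(\omega(-1,1))}$ (the relevant part of $Z_\infty$ being the finite cell $Z$, up to the exponentially small tails), which yields the $\sqrt{\epsilon}\,\|c_0^M\|_{H^2(\Sigma)}\|\nabla\pem\|_{L^2(\oem)}$ term; in the bulk the remaining $\epsilon$-prefactor survives untouched and one gets $\epsilon\,\|c_0^{\pm}\|_{H^2(\Omega^{\pm})}\|\nabla\pepm\|_{L^2(\Omega^{\pm})}$, noting that $\psi$ and $\psi'$ are bounded and that differentiating the cutoff only produces lower-order, $\epsilon$-prefactored terms. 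The main obstacle is the first step: constructing the glued field $h^i$ on $Z_\infty$ and verifying that it is genuinely divergence-free across the interfaces $S^{\pm}$ in the weak sense — this is where the precise form of the Neumann coupling in \eqref{CellProblemSecondOrderLayerM} is used, and where one must be careful that the decay and local-integrability hypotheses of Lemmas \ref{ExistenceLaplaceWeightedSpace}, \ref{RegularityLaplaceWeighted} and \ref{SkewSymmetricTensorBoundaryLayer} are all met so that the skew-symmetric potential exists with enough regularity to justify the integration by parts.
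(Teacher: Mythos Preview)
Your overall architecture is the same as the paper's: glue the three column vectors $T^{M,2}_{\cdot i}$ and (shifted) $T^{\pm,\bl}_{\cdot i}$ into a single divergence-free field $h^i$ on $Z_\infty$, invoke Lemma \ref{SkewSymmetricTensorBoundaryLayer} to produce one skew-symmetric potential $\beta^i$ on the whole strip, integrate by parts, and use the continuity of $\beta^i$ across $S^{\pm}$ to make the interface terms cancel. That part is correct.

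There is, however, a genuine gap. Lemma \ref{SkewSymmetricTensorBoundaryLayer} requires $\int_{Z_\infty} h^i\,dy = 0$, and you never verify this. In fact the tangential components $h^i_j$ for $j=1,\dots,n-1$ need \emph{not} have zero mean over $Z_\infty$, so you cannot apply the lemma to $h^i$ as it stands. The paper handles this in two steps. First it shows that the normal component $h^i_n$ does have zero mean: using $\nabla\cdot h^i=0$ one proves that $\int_{Y^{n-1}} h^i_n(\bar y,a)\,d\bar y$ is independent of $a$, and then the exponential decay forces this constant to be zero. Second, for the tangential components, the paper replaces $h^i$ by $\tilde h^i$ obtained by adding a suitable constant vector $H\in\R^{n-1}\times\{0\}$ \emph{only on the finite cell $Z$}; since $H_n=0$ there is no new normal jump at $S^{\pm}$, so $\tilde h^i$ is still divergence-free on $Z_\infty$, and $H$ is chosen so that $\int_{Z_\infty}\tilde h^i\,dy=0$. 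The price for this modification is an extra term $B_\epsilon = -\sum_{i,k} H_k\int_{\oem}\partial_{x_i} c_0^M\,\partial_{x_k}\pem\,dx$ in $\Delta_{\epsilon,T^{M,2}}$, which is estimated directly by $C\sqrt{\epsilon}\,\|c_0^M\|_{H^1(\Sigma)}\|\nabla\pem\|_{L^2(\oem)}$. Without this correction step your appeal to Lemma \ref{SkewSymmetricTensorBoundaryLayer} is not justified.

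A smaller point: in the final estimate the paper uses the \emph{essential boundedness} of $\beta^i_{kl}$ on $\overline{Z_\infty}$ (which follows from the uniform-in-$s$ local bounds of Lemma \ref{RegularityLaplaceWeighted}), rather than an $L^2$ scaling identity. Your $L^2$ approach works in the layer, but in the bulk the rescaled variable ranges over an unbounded part of $Z_\infty$, so you would need either $\beta^i\in L^2(Z_\infty)$ or its $L^\infty$ bound; both are available, but it is worth stating which one you are using.
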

\begin{proof}
We proceed in a similar way as in Lemma \ref{ErrorEstimateDeltaTMOne}, whereby we now use Lemma \ref{SkewSymmetricTensorBoundaryLayer}. Here, the crucial point is to control the boundary terms on $\Sigma$, which occur by replacing $T^{M,2}$ and $T^{\pm,\bl}$ by skew-symmetric tensors and integrating by parts. We handle this by constructing skew-symmetric tensors which are continuous across $S^{\pm}$.

First of all, we fix $i\in \{1,\ldots,n-1\}$ and define for $j=1,\ldots,n$
\begin{align*}
h_j(y):= \begin{cases}
h_j^+(y) := T_{ji}^{+,\bl}(y - e_n) &\mbox{ for } y \in Y^+ + e_n,
\\
h_j^M(y) := T_{ji}^{M,2}(y) &\mbox{ for } y \in Z,
\\
h_j^-(y) := T_{ji}^{-,\bl}(y+e_n) &\mbox{ for } y \in Y^- - e_n.  
\end{cases}
\end{align*}
We show that $h$ fulfills the conditions of Lemma \ref{SkewSymmetricTensorBoundaryLayer} except the mean value condition. The properties of $\nabla w_{i,1}^{\pm,\bl}$ (see Lemma \ref{BoundaryLayerBulkExistence})  imply the integrability conditions on $h=(h_1,\ldots,h_n)$ from Lemma \ref{SkewSymmetricTensorBoundaryLayer} (remember that we can choose $p$ arbitrary large, especially $p>n$). Further, according to $\eqref{CellProblemSecondOrderLayerM}$, we have $\nabla \cdot h^M = 0$ in $Z$ and by $\eqref{BoundaryLayerBulk}$ we have $\nabla \cdot h^{\pm} =0$ in $Y^{\pm}$. The Neumann-boundary condition of $w_{i,2}^M$ on $S^{\pm}$ implies $h^M\cdot \nu = h^{\pm} \cdot \nu $ on $S^{\pm}$, where $\nu $ denotes the outer unit normal on $\partial Z$ with respect to $Z$. Hence, we obtain $\nabla \cdot h = 0$ on $\zi$. Now we show, that $h_n$ fulfills the mean value condition:
%

For every $a,b \in \R$ with $a<b$ and $c\in \R$ it holds that
\begin{align*}
\int_{Y^{n-1}} \int_a^b h_n dy &= \int_{Y^{n-1}} \int_a^b h \cdot \nabla (y_n + c) dy 
\\
&= b \int_{Y^{n-1} }  h_n(\y,b) d\y - a \int_{Y^{n-1} } h_n(\y,a) d\y 
\\
&\hspace{2em}+ c\left(\int_{Y^{n-1}} h_n(\y,b) d\y - \int_{Y^{n-1}} h_n(\y,a) d\y \right).
\end{align*}
We emphasize that the trace of $h_n$ exists, due to the regularity of $w_{i,1}^{\pm,\bl}$ and $w_{i,2}^M$. Since $c \in \R$ is arbitrary, we obtain 
\begin{align*}
\int_{Y^{n-1}} h_n(\y,a)d\y = \int_{Y^{n-1}} h_n(\y,b)d\y.
\end{align*}
Let us check that this term is equal to zero. Let $R > 1$ (the case $R<-1$ follows the same lines) and  $\phi \in C_{0,\#}^{\infty}(\omega(R-1,R+1))$ with $0 \le \phi \le 1$, $\|\nabla \phi\|_{\infty} \le 2$, and $\phi = 1$ in an neighborhood of $Y^{n-1} \times \{R\}$. 
From the exponential decay of $h$ we immediately obtain
\begin{align*}
\int_{\omega(R-1,R)} h \cdot \nabla \phi dy \overset{R\to \infty}{\longrightarrow}0.
\end{align*}
Since $h$ is divergence-free, we obtain
\begin{align*}
\int_{Y^{n-1}} h_n(\y,R) d\y = \int_{\omega(R-1,R)} h \cdot \nabla \phi dy  \overset{R \to \infty}{\longrightarrow} 0.
\end{align*}
This implies $\int_{Y^{n-1}} h_n(\y,a) d\y = 0$ for all $a \in \R$. Especially, we obtain $\int_{\zi} h_n dy = 0$.

For arbitrary $H \in \R^{n-1} \times \{0\}$ we define $\tilde{h}$ by $\tilde{h}^M:= h^M + H$ in $Z$ and $\tilde{h}^{\pm}:= h^{\pm} $ in $ Y^{\pm} \mp e_n$. Then, for $\tilde{h}$ we still have $\nabla \cdot \tilde{h} = 0$ in $\zi$ and $\tilde{h}$ fulfills the same integrability conditions as $h$. We choose $H$ in such a way that $\int_{\zi} \tilde{h}_j dy = 0$ for $j=1,\ldots,n-1$. Then, Lemma \ref{SkewSymmetricTensorBoundaryLayer} implies the existence of a skew-symmetric tensor $\big(\beta_{jl}^i\big) \in W^{1,p}_{1,\#}(\zi)$ with $\sum_{l=1}^n \partial_l \beta_{jl}^i = \tilde{h}_j$. 

Now, we consider the error terms $\Delta_{\epsilon, T^{M,2}} $ and $\Delta_{\epsilon,T^{\pm,\bl}}$. We  have
\begin{align*}
\Delta_{\epsilon,T^{M,2}} &= \sum_{i=1}^{n-1} \sum_{k,l=1}^n \int_{\oem} \partial_{y_l} \beta_{kl}^i \left(\fxe\right) \partial_{x_i} c_0^M \partial_{x_k} \pem dx - \sum_{i,k=1}^{n-1}  H_k \int_{\oem} \partial_{x_i} c_0^M \partial_{x_k} \pem dx
\\
&=: A_{\epsilon} + B_{\epsilon}.
\end{align*}
For the second term we immediately obtain from the H\"older-inequality
\begin{align*}
B_{\epsilon} \le C \sqrt{\epsilon} \|c_0^M\|_{H^1(\Sigma)} \big\|\nabla \pem \big\|_{L^2(\oem)} .
\end{align*}
For the first term $A_{\epsilon}$ we obtain by integration by parts
\begin{align*}
A_{\epsilon} &= \epsilon \sum_{i=1}^{n-1} \sum_{k,l=1}^n \int_{\oem} \partial_{x_l} \beta_{kl}^i \left(\fxe\right) \partial_{x_i} c_0^M \partial_{x_k} \pem dx 
\\
&= - \epsilon \sum_{i=1}^{n-1} \sum_{k,l=1}^n \int_{\oem} \beta_{kl}^i \left(\fxe\right) \left[\partial_{x_l x_i } c_0^M \partial_{x_k} \pem + \partial_{x_i } c_0^M \partial_{x_k x_l } \pem \right] dx
\\
&\hspace{2em} + \epsilon   \sum_{i=1}^{n-1} \sum_{k,l=1}^n \int_{\partial \oem} \beta_{kl}^i \left(\fxe\right) \nu_l\partial_{x_i} c_0^M \partial_{x_k} \pem d\sigma
\\
&= -\epsilon \sum_{i=1}^{n-1} \sum_{k,l=1}^n \int_{\oem} \beta_{kl}^i\left(\fxe\right) \partial_{x_l x_i } c_0^M \partial_{x_k } \pem dx 
\\
&\hspace{2em} + \sum_{\pm} \sum_{i=1}^{n-1} \sum_{k=1}^n \int_{\sepm} \pm \beta_{kn}^i \left(\fxe\right) \partial_{x_i} c_0^M \partial_{x_k} \pem dx ,
\end{align*}
where the lateral boundary terms of $\partial \oem$ vanish, due to the $\Sigma$-periodicity of the functions, and the terms involving $\partial_{x_k x_l} \pem$ vanish, due to the skew-symmetry of $\beta_{kl}^i$. With similar arguments, we obtain
\begin{align*}
\Delta_{\epsilon,T^{\pm,\bl}} &=\sum_{\pm} \sum_{i=1}^{n-1} \sum_{k,l=1}^n \int_{\Omega^{\pm}} \psi(x_n) \partial_{y_l} \beta_{kl}^i \left(\fxe \pm e_n \right) \partial_{x_i} c_0^{\pm} \partial_{x_k} \pepm dx
\\
&= - \epsilon \sum_{\pm} \sum_{i=1}^{n-1} \sum_{k,l=1}^n \int_{\Omega^{\pm}}\beta_{kl}^i \left(\fxe \pm e_n \right) \left[ \partial_{x_l x_i} c_0^{\pm} \psi(x_n) + \delta_{ln} \psi^{\prime} (x_n) \partial_{x_i} c_0^{\pm} \right] \partial_{x_k} \pepm dx 
\\
&\hspace{1em} + \epsilon \sum_{\pm} \sum_{i=1}^{n-1} \sum_{k=1}^n \int_{\Sigma} \mp \beta_{kn}^i \left(\fxe \pm e_n \right) \partial_{x_i} c_0^{\pm} \partial_{x_k} \pepm d\sigma.
\end{align*}
Adding up these terms, the boundary terms cancel out and we obtain
\begin{align*}
\Delta_{\epsilon,T^{M,2}}& + \sum_{\pm} \Delta_{\epsilon,T^{\pm,\bl}} = B_{\epsilon} - \epsilon \sum_{i=1}^{n-1} \sum_{k,l=1}^n \int_{\oem} \beta_{kl}^i\left(\fxe\right) \partial_{x_l x_i } c_0^M \partial_{x_k } \pem dx 
\\
&-\epsilon \sum_{\pm} \sum_{i=1}^{n-1} \sum_{k,l=1}^n \int_{\Omega^{\pm}}\beta_{kl}^i \left(\fxe \pm e_n \right) \left[ \partial_{x_l x_i} c_0^{\pm} \psi(x_n) + \delta_{ln} \psi^{\prime} (x_n) \partial_{x_i} c_0^{\pm} \right] \partial_{x_k} \pepm dx. 
\end{align*}
Now, using the essential boundedness of $\beta_{kl}^i$ from Lemma \ref{SkewSymmetricTensorBoundaryLayer}, we obtain the desired result.
\end{proof}

We summarize our results in the following proposition:

\begin{proposition}\label{ZusammenfassungAbschaetzungFehlertermeZweiterOrdnung}
For $c_0^{\pm} \in L^2((0,T),H^2(\Omega^{\pm}))$ with $\partial_t c_0^{\pm} \in H^1((0,T),H^1(\Omega^{\pm}))$ and $c_0^M\in L^2((0,T),H^2(\Sigma))$ with $\partial_t c_0^M \in H^1((0,T),H^1(\Sigma))$ the following estimate is valid for all $\pepm \in H^1(\Omega^{\pm})$ and $\pem \in H^1(\oem)$ with $\pepm|_{\Sigma} = \pem|_{\sepm}$:
\begin{align}
\label{SummaryEstimatesErrorTermsProp}
\begin{aligned}
\sum_{\pm}& \left[ \int_{\Omega^{\pm}} \partial_t \big(\cepm - c_{\epsilon,\app,2}^{\pm}\big) \pepm dx + \int_{\Omega^{\pm}} D^{\pm}\nabla \big(\cepm - c_{\epsilon,\app,2}^{\pm} \big) \cdot \nabla \pepm dx  \right]
\\
&+ \foe \int_{\oem} \partial_t \big(\cem - c_{\epsilon,\app,1}^M \big) \pem dx + \foe \int_{\oem} D^M\left(\fxe\right) \nabla \big(\cem - c_{\epsilon,\app,2}^M \big)\cdot \nabla \pem dx 
\\
&\le C \sum_{\pm} \left[ \|\cepm - c_0^{\pm} \|_{L^2(\Omega^{\pm})} \|\pepm\|_{L^2(\Omega^{\pm})} + \epsilon \left( 1+ \|\partial_t c_0^{\pm}\|_{H^1(\Sigma)} + \|c_0^{\pm}\|_{H^2(\Omega^{\pm})} \right) \|\pepm\|_{H^1(\Omega^{\pm})}\right]
\\
& + \frac{C}{\epsilon} \|\cem - c_0^M \|_{L^2(\oem)} \|\pem\|_{L^2(\oem)} 
\\
&+ C\sqrt{\epsilon} \|\pem\|_{H^1(\oem)} \left(1 + \|\partial_t c_0^M\|_{H^1(\Sigma)} +  \|c_0^M\|_{H^2(\Sigma)} + \sum_{\pm} \|c_0^{\pm} \|_{H^2(\Omega^{\pm})} \right).
\end{aligned}
\end{align}
\end{proposition}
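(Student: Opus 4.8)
The plan is to observe that this proposition demands no genuinely new estimate; it is the assembly of everything proved earlier in the section. First I would note that the left-hand side of \eqref{SummaryEstimatesErrorTermsProp} is exactly the quantity on the left of \eqref{StartingEquationErrorEstimates} for $j=2$ — in particular the time-derivative term in the layer carries $c_{\epsilon,\app,1}^M=c_{\epsilon,\app,j-1}^M$ while the gradient term carries $c_{\epsilon,\app,2}^M$, precisely as in \eqref{StartingEquationErrorEstimates} — so this left-hand side equals $\sum_{\pm}\big[A_\epsilon^{\pm,1}+A_\epsilon^{\pm,2}\big]+A_\epsilon^{M,3}+A_\epsilon^{M,4}$. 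I would then invoke the algebraic rearrangement already carried out above: expand $A_\epsilon^{\pm,2}$ via the definition of $c_1^{\pm,\bl}$ and the tensor $T^{\pm,\bl}$, expand $A_\epsilon^{M,4}$ via $T^{M,1}$ and $T^{M,2}$, introduce the layer-average $\bpem$, and test the zeroth order macroscopic problem \eqref{MacroscopicProblemZeroOrder} with the pair $(\pepm,\bpem)$ — this is admissible because, under the regularity assumed here (cf.\ Proposition \ref{RegularityResultsMacroscopicSolution}), the normal fluxes $D^{\pm}\nabla c_0^{\pm}\cdot\nu^{\pm}$ belong to $L^2(\Sigma)$. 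This reproduces identity \eqref{AuxiliaryEquation}, reducing the claim to bounding the right-hand side of \eqref{AuxiliaryEquation}.

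Next I would estimate each summand of \eqref{AuxiliaryEquation} by quoting the matching result: the two nonlinear bulk differences by Proposition \ref{EstimateNonlinearTermsBulk}, the nonlinear layer difference by Proposition \ref{EstimateNonlinearTermsLayer}, the interface term $\sum_{\pm}\int_{\Sigma}D^{\pm}\nabla c_0^{\pm}\cdot\nu^{\pm}(\pepm-\bpem)\,d\sigma$ by Lemma \ref{ErrorEstimateNormalFlux}, the term $\Delta_{\epsilon,\partial_t}$ by Lemma \ref{EstimateDeltaEpsPartialT}, the term $\Delta_{\epsilon,rest}$ by Lemma \ref{EstimateDeltaEpsRest}, and the term $\Delta_{\epsilon,T}$ — split according to \eqref{DeltaEpsT} as $\Delta_{\epsilon,T^{M,1}}+\big(\Delta_{\epsilon,T^{M,2}}+\sum_{\pm}\Delta_{\epsilon,T^{\pm,\bl}}\big)$ — by Lemma \ref{ErrorEstimateDeltaTMOne} together with Lemma \ref{ErrorEstimateDeltaTpmblTMtwo}. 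I would check along the way that the hypotheses $c_0^{\pm}\in L^2((0,T),H^2(\Omega^{\pm}))$ with $\partial_t c_0^{\pm}\in H^1((0,T),H^1(\Omega^{\pm}))$ and $c_0^M\in L^2((0,T),H^2(\Sigma))$ with $\partial_t c_0^M\in H^1((0,T),H^1(\Sigma))$ imply the (weaker) hypotheses required in each of these statements.

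Finally I would sum the resulting bounds and collect the test-function factors: terms proportional to $\|\pepm\|_{L^2(\Omega^{\pm})}$ or $\|\nabla\pepm\|_{L^2(\Omega^{\pm})}$ are absorbed into $\|\pepm\|_{H^1(\Omega^{\pm})}$, and terms proportional to $\|\pem\|_{L^2(\oem)}$ or $\|\nabla\pem\|_{L^2(\oem)}$ into $\|\pem\|_{H^1(\oem)}$; the interface term and the $\sqrt{\epsilon}$-parts of $\Delta_{\epsilon,rest}$ and $\Delta_{\epsilon,T}$, all controlled by $\|\nabla\pem\|_{L^2(\oem)}$, feed into the last line of \eqref{SummaryEstimatesErrorTermsProp}; the singular contribution proportional to $\frac1\epsilon\|\cem-c_0^M\|_{L^2(\oem)}\|\pem\|_{L^2(\oem)}$ comes only from Proposition \ref{EstimateNonlinearTermsLayer} and the contribution $\|\cepm-c_0^{\pm}\|_{L^2(\Omega^{\pm})}\|\pepm\|_{L^2(\Omega^{\pm})}$ only from Proposition \ref{EstimateNonlinearTermsBulk}. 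Keeping track of which power of $\epsilon$ (namely $1$, $\sqrt{\epsilon}$ or $\epsilon$) and which norm of the macroscopic data multiplies each test-function norm then yields \eqref{SummaryEstimatesErrorTermsProp} exactly. I expect the only real difficulty here to be organizational rather than analytic: making sure the decomposition \eqref{AuxiliaryEquation} is transcribed faithfully and that every summand is matched with exactly one earlier estimate carrying the correct $\epsilon$-power.
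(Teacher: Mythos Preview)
Your proposal is correct and follows exactly the paper's own proof: assemble identity \eqref{AuxiliaryEquation} from \eqref{StartingEquationErrorEstimates} for $j=2$, then bound each summand by the cited Lemmas \ref{EstimateDeltaEpsPartialT}, \ref{EstimateDeltaEpsRest}, \ref{ErrorEstimateNormalFlux}, \ref{ErrorEstimateDeltaTMOne}, \ref{ErrorEstimateDeltaTpmblTMtwo} and Propositions \ref{EstimateNonlinearTermsLayer}, \ref{EstimateNonlinearTermsBulk}. The only point you omit that the paper makes explicit is that the derivation of \eqref{AuxiliaryEquation} and the integration-by-parts arguments in Lemmas \ref{ErrorEstimateDeltaTMOne} and \ref{ErrorEstimateDeltaTpmblTMtwo} use smooth test functions (terms like $\partial_{x_k x_l}\pem$ appear and cancel by skew-symmetry), so one first proves \eqref{SummaryEstimatesErrorTermsProp} for $\pepm,\pem$ smooth and then extends to $H^1$ by density.
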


\begin{proof}
For smooth functions $\pepm$ and $\pem$ the result follows directly from $\eqref{StartingEquationErrorEstimates}$ and $\eqref{AuxiliaryEquation}$, Proposition \ref{EstimateNonlinearTermsLayer} and \ref{EstimateNonlinearTermsBulk}, and Lemma \ref{EstimateDeltaEpsPartialT}, \ref{EstimateDeltaEpsRest}, \ref{ErrorEstimateDeltaTMOne}, \ref{ErrorEstimateDeltaTpmblTMtwo}, and \ref{ErrorEstimateNormalFlux}. Then, the result for functions $\pepm \in H^1(\Omega^{\pm})$ and $\pem \in H^1(\oem)$ follows by a density argument.
\end{proof}

\subsection{Error estimates for the first order approximation}
In this subsection, we give the proof of Theorem \ref{MainTheoremFirstOrderApproximation}, \ie we proof the estimate for the  error $\ce - c_{\epsilon,\app,1}$. We start from equation $\eqref{StartingEquationErrorEstimates}$ with $j=1$ and use the same methods as for  equation $\eqref{AuxiliaryEquation}$. Then for   all $\pepm \in H^1(\Omega^{\pm})$ and $\pem \in H^1(\oem)$ with $\pepm|_{\Sigma} = \pem|_{\sepm}$, we get
\begin{align}
\begin{aligned}\label{AuxialiaryEquationFirstOrder}
\sum_{\pm}& \left[ \int_{\Omega^{\pm}} \partial_t \big(\cepm - c_0^{\pm}\big) \pepm dx + \int_{\Omega^{\pm}} D^{\pm}\nabla \big(\cepm - c_{\epsilon,\app,1}^{\pm} \big) \cdot \nabla \pepm dx  \right]
\\
&+ \foe \int_{\oem} \partial_t \big(\cem - c_0^M \big) \pem dx + \foe \int_{\oem} D^M\left(\fxe\right) \nabla \big(\cem - c_{\epsilon,\app,1}^M \big)\cdot \nabla \pem dx 
\\
&= \sum_{\pm} \left[ \int_{\Omega^{\pm}} \fepm(\cepm) \pepm dx - \int_{\Omega^{\pm}} \int_{Y^n} f^{\pm}(t,y,c_0^{\pm}) \pepm dy dx  \right]
\\
&+ \foe \int_{\oem} \gem(\cem)\pem dx - \int_{\Sigma} \int_Z g^M(t,y,c_0^M) \bpem dy d\x 
\\
& +\sum_{\pm} \int_{\Sigma} D^{\pm} \nabla c_0^{\pm}\cdot \nu^{\pm} \big( \pepm - \bpem \big) d\sigma +  \Delta_{\epsilon,T^{M,1}} + \Delta_{\epsilon,rest,1}
\\
&=: \Delta_{\epsilon,1}.
\end{aligned}
\end{align}
with 
\begin{align*}
\Delta_{\epsilon,rest,1}&:= - \sum_{j=1}^{n-1} \int_{\oem} D^M\left(\fxe\right) \nabla_{\x} \partial_{x_j} c_0^M \cdot \nabla \pem w_{j,1}^M\left(\fxe\right) dx. 
\end{align*}

\begin{lemma}\label{EstimatesCorrector}
Let $c_0^{\pm} \in L^2((0,T),H^2(\Omega^{\pm}))$, then for the first order corrector $c_1^{\pm,\bl}$ in the bulk-domains it holds that
\begin{align*}
\left\|c_1^{\pm,\bl}\left(\cdot,\frac{\cdot}{\epsilon}\right)\right\|_{L^2(\Omega^{\pm})} + \epsilon \left\|\nabla c_1^{\pm,\bl}\left(\cdot,\frac{\cdot}{\epsilon}\right)\right\|_{L^2(\Omega^{\pm})} &\le C\|c_0^{\pm}\|_{H^2(\Omega^{\pm})}.
\end{align*}
Especially, we have
\begin{align*}
\|\cepm - c_{\epsilon,\app,2}^{\pm}\|_{L^2((0,T),H^1(\Omega^{\pm}))} \le C \left(1 + \|c_0^{\pm}\|_{L^2((0,T),H^2(\Omega^{\pm}))}\right)
\end{align*}
with $c_{\epsilon,\app,2}^{\pm} = c_0^{\pm} + \epsilon c_1^{\pm,\bl}\left(\cdot , \frac{\cdot}{\epsilon}\right)$.

For $c_0^M \in L^2((0,T),H^2(\Sigma))$, the first order corrector $c_1^M$ in the thin layer fulfills
\begin{align*}
\frac{1}{\sqrt{\epsilon}}\left\|c_1^M\left(\bar{\cdot} , \frac{\cdot}{\epsilon}\right)\right\|_{L^2(\oem)} 
+ \sqrt{\epsilon} \left\|\nabla c_1^M\left(\bar{\cdot} , \frac{\cdot}{\epsilon}\right)\right\|_{L^2(\oem)} &\le C \|c_0^M\|_{H^2(\Sigma)}.
\end{align*}
Especially, we have
\begin{align*}
\|\cem - c_{\epsilon,\app,1}^M\|_{L^2((0,T),H^1(\oem))} \le  C\sqrt{\epsilon} \left(1 + \|c_0^M\|_{L^2((0,T),H^2(\Sigma))}\right),
\end{align*}
with $c_{\epsilon,\app,1}^M = c_0^M + \epsilon c_1^M\left(\bar{\cdot},\frac{\cdot}{\epsilon}\right)$.
\end{lemma}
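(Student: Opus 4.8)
The statement is proved pointwise in $t$ and then integrated over $(0,T)$; the whole argument is careful bookkeeping of the powers of $\epsilon$ coming from two sources: the chain rule applied to the fast variable $y=\frac{x}{\epsilon}$ (which produces a factor $\epsilon^{-1}$), and the fact that the $x_n$-fibre of $\oem$ has length $2\epsilon$ (which produces a factor $\epsilon^{1/2}$ in $L^2(\oem)$-norms of functions depending only on $\x$). The analytic inputs are the uniform bounds $\|w_{j,1}^M\|_{C^1(\overline{Z})}\le C$ and $\|w_{j,1}^{\pm,\bl}\|_{W^{1,\infty}(\R^n_{\pm})}\le C$ from Lemmas~\ref{CellProblemFirstOrderLayerExistence} and \ref{BoundaryLayerBulkExistence}, together with the a priori estimates of Proposition~\ref{ExistenceAprioriMicroscopicProblem}. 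First I would treat $c_1^{\pm,\bl}$: from $c_1^{\pm,\bl}\left(x,\frac{x}{\epsilon}\right)=\psi(x_n)\sum_{j=1}^{n-1}\partial_{x_j}c_0^{\pm}(x)\,w_{j,1}^{\pm,\bl}\left(\frac{x}{\epsilon}\right)$, H\"older's inequality with $|\psi|\le 1$ and $\|w_{j,1}^{\pm,\bl}\|_{L^\infty}\le C$ gives $\left\|c_1^{\pm,\bl}\left(\cdot,\frac{\cdot}{\epsilon}\right)\right\|_{L^2(\Omega^{\pm})}\le C\|\nabla c_0^{\pm}\|_{L^2(\Omega^{\pm})}$. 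Applying the product and chain rules, $\nabla\!\left(c_1^{\pm,\bl}\left(\cdot,\frac{\cdot}{\epsilon}\right)\right)$ consists of one term $\psi(x_n)\sum_j\partial_{x_j}c_0^{\pm}(x)(\nabla_y w_{j,1}^{\pm,\bl})\left(\frac{x}{\epsilon}\right)$ carrying the factor $\epsilon^{-1}$, plus terms involving $\psi'$, second derivatives of $c_0^{\pm}$, or $\partial_{x_j}c_0^{\pm}\,w_{j,1}^{\pm,\bl}$ which stay bounded in $\epsilon$; using $\|w_{j,1}^{\pm,\bl}\|_{W^{1,\infty}}\le C$ and $|\psi|,|\psi'|\le C$ this yields $\left\|\nabla\!\left(c_1^{\pm,\bl}\left(\cdot,\frac{\cdot}{\epsilon}\right)\right)\right\|_{L^2(\Omega^{\pm})}\le C\epsilon^{-1}\|c_0^{\pm}\|_{H^2(\Omega^{\pm})}$ for $\epsilon\le 1$. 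Multiplying by $\epsilon$ and adding gives the first inequality.

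Next I would treat $c_1^M\left(\x,\frac{x}{\epsilon}\right)=\sum_{j=1}^{n-1}\partial_{x_j}c_0^M(\x)\,w_{j,1}^M\left(\frac{x}{\epsilon}\right)$, which depends on $x_n$ only through the fast variable. Since $\int_{\oem}|\phi(\x)|^2\,dx=2\epsilon\int_\Sigma|\phi|^2\,d\x$ for any $\phi$ on $\Sigma$, H\"older's inequality together with $\|w_{j,1}^M\|_{C^1(\overline{Z})}\le C$ gives $\left\|c_1^M\left(\bar{\cdot},\frac{\cdot}{\epsilon}\right)\right\|_{L^2(\oem)}\le C\sqrt{\epsilon}\,\|\nabla_{\x}c_0^M\|_{L^2(\Sigma)}$, while after the chain rule produces the $\epsilon^{-1}$ one obtains $\left\|\nabla\!\left(c_1^M\left(\bar{\cdot},\frac{\cdot}{\epsilon}\right)\right)\right\|_{L^2(\oem)}\le C\epsilon^{-1/2}\|c_0^M\|_{H^2(\Sigma)}$ for $\epsilon\le 1$. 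Multiplying these by $\epsilon^{-1/2}$ and $\epsilon^{1/2}$ respectively and using $\|\cdot\|_{H^1(\Sigma)}\le\|\cdot\|_{H^2(\Sigma)}$ gives the third inequality.

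For the two ``especially'' statements I would use the triangle inequality. In the bulk, $\cepm-c_{\epsilon,\app,2}^{\pm}=\cepm-c_0^{\pm}-\epsilon\,c_1^{\pm,\bl}\left(\cdot,\frac{\cdot}{\epsilon}\right)$; by Proposition~\ref{ExistenceAprioriMicroscopicProblem} and $T<\infty$ we have $\|\cepm\|_{L^2((0,T),H^1(\Omega^{\pm}))}\le C$, trivially $\|c_0^{\pm}\|_{L^2((0,T),H^1(\Omega^{\pm}))}\le\|c_0^{\pm}\|_{L^2((0,T),H^2(\Omega^{\pm}))}$, and from the first part $\epsilon\left\|c_1^{\pm,\bl}\left(\cdot,\frac{\cdot}{\epsilon}\right)\right\|_{H^1(\Omega^{\pm})}\le C\|c_0^{\pm}\|_{H^2(\Omega^{\pm})}$ (the $\epsilon^{-1}$ in the gradient being cancelled by the explicit $\epsilon$), so integration in $t$ yields the bound $C(1+\|c_0^{\pm}\|_{L^2((0,T),H^2(\Omega^{\pm}))})$. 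In the layer, $\cem-c_{\epsilon,\app,1}^M=\cem-c_0^M(\x)-\epsilon\,c_1^M\left(\bar{\cdot},\frac{\cdot}{\epsilon}\right)$; here $\|\cem\|_{L^2((0,T),H^1(\oem))}\le C\sqrt{\epsilon}$ by Proposition~\ref{ExistenceAprioriMicroscopicProblem}, $\|c_0^M(\x)\|_{H^1(\oem)}=\sqrt{2\epsilon}\,\|c_0^M\|_{H^1(\Sigma)}$ since its $x_n$-derivative vanishes, and $\epsilon\left\|c_1^M\left(\bar{\cdot},\frac{\cdot}{\epsilon}\right)\right\|_{H^1(\oem)}\le C\sqrt{\epsilon}\,\|c_0^M\|_{H^2(\Sigma)}$ by the previous paragraph, so the sum is $\le C\sqrt{\epsilon}(1+\|c_0^M\|_{L^2((0,T),H^2(\Sigma))})$.

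There is no genuine obstacle in this lemma; the only point demanding care is the consistent accounting of the three independent $\epsilon$-scalings — the $\epsilon^{-1}$ from differentiating the fast variable, the $\epsilon^{1/2}$ from the $O(\epsilon)$-thin fibre of $\oem$, and the explicit prefactors $\epsilon$ and $\epsilon^2$ in the definitions of $c_{\epsilon,\app,1}$ and $c_{\epsilon,\app,2}$ — together with the fact that the layer a priori bounds in Proposition~\ref{ExistenceAprioriMicroscopicProblem} already carry a $\sqrt{\epsilon}$ on their right-hand side, which is exactly why the weighted norm $\epsilon^{-1/2}\|\cdot\|_{L^2(\oem)}$ is the natural one here.
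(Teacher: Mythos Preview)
Your proof is correct and follows exactly the approach indicated in the paper, which simply states that the estimates follow from the regularity results for $w_{j,1}^{\pm,\bl}$ and $w_{j,1}^M$ (Lemmas~\ref{CellProblemFirstOrderLayerExistence} and \ref{BoundaryLayerBulkExistence}) together with the a priori estimates of Proposition~\ref{ExistenceAprioriMicroscopicProblem}. You have carried out precisely this bookkeeping of the $\epsilon$-scalings in full detail, so there is nothing to add.
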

\begin{proof}
These estimates easily follow from the regularity results for $w_{j,1}^{\pm,\bl}$ and $w_{j,1}^M$ in Lemma \ref{BoundaryLayerBulk} and \ref{CellProblemFirstOrderLayer}, and the a priori estimates in Proposition \ref{ExistenceAprioriMicroscopicProblem}.
\end{proof}

\begin{theorem}\label{GeneralErrorEstimateFirstOrderApproximation}
Let $c_0^{\pm} \in L^2((0,T),H^2(\Omega^{\pm}))$ with $\partial_t c_0^{\pm} \in L^2((0,T),L^2(\Omega^{\pm}))$ and $\nabla c_0^{\pm} \in L^{\infty}((0,T)\times \Omega^{\pm})$, and $c_0^M \in L^2((0,T),H^2(\Sigma))$ with $\partial_t c_0^M \in L^2((0,T),L^2(\Sigma))$. Then, the following error estimate is valid 
\begin{align*}
\sum_{\pm} &\bigg[ \|\cepm - c_0^{\pm} \|_{L^{\infty}((0,T),L^2(\Omega^{\pm}))} + \big\|\nabla (\cepm - c_0^{\pm}) \big\|_{L^2((0,T),L^2(\Omega^{\pm}))} \bigg]
\\
+& \frac{1}{\sqrt{\epsilon}} \|\cem - c_0^M\|_{L^{\infty}((0,T),L^2(\oem))} + \frac{1}{\sqrt{\epsilon}} \big\|\nabla (\cem - c_{\epsilon,\app , 1}^M ) \big\|_{L^2((0,T),L^2(\oem))}
\\
&\le C \sqrt{\epsilon} \bigg( 1 + \|c_0^M\|_{L^2((0,T),H^2(\Sigma))} + \|\partial_t c_0^M \|_{L^2((0,T),L^2(\Sigma))} \\
& \hspace{2em} + \sum_{\pm} \left[ \|c_0^{\pm}\|_{L^2((0,T),H^2(\Omega^{\pm}))} + \|\partial_t c_0^{\pm} \|_{L^2((0,T),L^2(\Omega^{\pm}))} + \big\|\nabla c_0^{\pm}\big\|_{L^{\infty}((0,T)\times \Omega^{\pm})} \right] \bigg).
\end{align*}
\end{theorem}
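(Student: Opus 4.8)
The starting point is the identity \eqref{AuxialiaryEquationFirstOrder}, valid for every test pair $(\pepm,\pem)$ with $\pepm|_{\Sigma}=\pem|_{\sepm}$. The plan is to insert the error itself, corrected in the bulk domains so that admissibility is restored:
\[
\pepm := \cepm - c_{\epsilon,\app,2}^{\pm} = (\cepm - c_0^{\pm}) - \epsilon\, c_1^{\pm,\bl}\!\left(t,x,\fxe\right), \qquad
\pem := \cem - c_{\epsilon,\app,1}^M = (\cem - c_0^M) - \epsilon\, c_1^M\!\left(t,\x,\fxe\right).
\]
Although the boundary-layer term $c_1^{\pm,\bl}$ does not occur in the first-order approximation, it is precisely what makes the pair compatible on $\Sigma$, since $c_{\epsilon,\app,2}^{\pm}$ and $c_{\epsilon,\app,1}^M$ coincide on $\sepm$ by the matching condition in \eqref{BoundaryLayerBulk}; together with the regularity of $c_0^{\pm},c_0^M$ and of the profiles $w_{j,1}^M$, $w_{j,1}^{\pm,\bl}$ (Lemmas \ref{CellProblemFirstOrderLayerExistence} and \ref{BoundaryLayerBulkExistence}) this gives $(\pepm,\pem)\in H^1(\Omega^{\pm})\times H^1(\oem)$.

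Inserting this pair into \eqref{AuxialiaryEquationFirstOrder}, the time-derivative terms on the left produce $\frac12\frac{d}{dt}\big(\sum_{\pm}\|\cepm - c_0^{\pm}\|_{L^2(\Omega^{\pm})}^2 + \foe\|\cem - c_0^M\|_{L^2(\oem)}^2\big)$ plus the correction terms $-\epsilon\int_{\Omega^{\pm}}\partial_t(\cepm - c_0^{\pm})\,c_1^{\pm,\bl}(t,x,\fxe)\,dx$ and $-\int_{\oem}\partial_t(\cem - c_0^M)\,c_1^M(t,\x,\fxe)\,dx$; the diffusion terms on the left produce the coercive quantity $\geq c\big(\sum_{\pm}\|\nabla(\cepm - c_0^{\pm})\|_{L^2(\Omega^{\pm})}^2 + \foe\|\nabla(\cem - c_{\epsilon,\app,1}^M)\|_{L^2(\oem)}^2\big)$ for some $c>0$, plus the correction $-\epsilon\int_{\Omega^{\pm}}D^{\pm}\nabla(\cepm - c_0^{\pm})\cdot\nabla c_1^{\pm,\bl}(t,x,\fxe)\,dx$. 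All correction terms are moved to the right and grouped with $\Delta_{\epsilon,1}$.

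Every resulting right-hand side term is then estimated by results already established: the interface term by Lemma \ref{ErrorEstimateNormalFlux}, the term $\Delta_{\epsilon,T^{M,1}}$ by Lemma \ref{ErrorEstimateDeltaTMOne}, the residual $\Delta_{\epsilon,rest,1}$ directly (as in Lemma \ref{EstimateDeltaEpsRest}), and the nonlinear differences by Propositions \ref{EstimateNonlinearTermsBulk} and \ref{EstimateNonlinearTermsLayer}, with $\|\pepm\|_{H^1}$ and $\|\pem\|_{H^1}$ bounded via Lemma \ref{EstimatesCorrector} and the a priori bounds of Proposition \ref{ExistenceAprioriMicroscopicProblem}. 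After Young's inequality each term splits into (i) a multiple of $E_{\epsilon}(t):=\sum_{\pm}\|\cepm - c_0^{\pm}\|_{L^2(\Omega^{\pm})}^2 + \foe\|\cem - c_0^M\|_{L^2(\oem)}^2$, (ii) a fraction of the coercive gradient terms, absorbed into the left-hand side, and (iii) an integrable remainder whose time-integral is of order $\epsilon\,(1+\cdots)^2$, with $(\cdots)$ the norm combination in the statement. Two points need care: for the $\foe$-weighted layer contributions one uses $|\oem| = 2\epsilon|\Sigma|$, so that, e.g., $\|\partial_t(\cem - c_0^M)\|_{L^2((0,T)\times\oem)} = O(\sqrt{\epsilon})$ by Proposition \ref{ExistenceAprioriMicroscopicProblem}; and the correction $\epsilon\int_{\Omega^{\pm}}D^{\pm}\nabla(\cepm - c_0^{\pm})\cdot\nabla c_1^{\pm,\bl}(t,x,\fxe)\,dx$ is of size $O(\sqrt{\epsilon})\,\|\nabla(\cepm - c_0^{\pm})\|_{L^2(\Omega^{\pm})}$ only because the $\epsilon^{-1}$-singular part of $\nabla c_1^{\pm,\bl}$ carries the exponentially decaying $\nabla_y w_{j,1}^{\pm,\bl}$ (contributing an $L^2(\Omega^{\pm})$-factor $O(\sqrt{\epsilon})$) multiplied by the essentially bounded $\nabla_{\x} c_0^{\pm}$ — this is exactly where the hypothesis $\nabla_{\x} c_0^{\pm}\in L^{\infty}$ is needed.

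Collecting everything gives a differential inequality of the form $\frac12\frac{d}{dt}E_{\epsilon} + \frac c2\sum_{\pm}\|\nabla(\cepm - c_0^{\pm})\|_{L^2(\Omega^{\pm})}^2 + \frac{c}{2\epsilon}\|\nabla(\cem - c_{\epsilon,\app,1}^M)\|_{L^2(\oem)}^2 \le C\,E_{\epsilon} + g(t)$ with $\int_0^T g\,dt \le C\epsilon\,(1+\cdots)^2$. Since $E_{\epsilon}(0)\le C\epsilon^2$ by \eqref{ErrorEstimateInitialValues}, Gronwall's lemma yields $E_{\epsilon}(t)\le C\epsilon$ uniformly in $t$, and integrating the coercive terms over $(0,T)$ gives $\int_0^T\big(\sum_{\pm}\|\nabla(\cepm - c_0^{\pm})\|_{L^2(\Omega^{\pm})}^2 + \foe\|\nabla(\cem - c_{\epsilon,\app,1}^M)\|_{L^2(\oem)}^2\big)dt \le C\epsilon$; taking square roots produces the claimed estimate. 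The main obstacles are conceptual rather than computational — the deep tools (skew-symmetric tensors, boundary-layer solvability) already sit in the earlier lemmas — namely recognising that admissibility of the test pair forces the boundary-layer correction into it, and verifying that, after exploiting the thinness of the layer and the exponential decay of the boundary-layer gradients, every correction and residual term indeed carries the extra power of $\epsilon$ required for the $\epsilon^{1/2}$-estimate.
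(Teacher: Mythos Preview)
Your proposal is correct and follows essentially the same approach as the paper's own proof: the same admissible test pair $\pepm=\cepm-c_{\epsilon,\app,2}^{\pm}$, $\pem=\cem-c_{\epsilon,\app,1}^M$, the same lemmas invoked for $\Delta_{\epsilon,1}$, and the same treatment of the correction terms---in particular, you correctly identify that the $L^{\infty}$ hypothesis on $\nabla_{\x}c_0^{\pm}$ is needed precisely to handle the $\epsilon^{-1}$-singular part of $\nabla c_1^{\pm,\bl}$ via the $O(\sqrt{\epsilon})$ bound on $\|\nabla_y w_{j,1}^{\pm,\bl}(\cdot/\epsilon)\|_{L^2(\Omega^{\pm})}$.
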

\begin{proof}
We already mentioned that $\ce  - c_{\epsilon,\app,1}$ is not an admissible test function for $\eqref{StartingEquationErrorEstimates}$, hence, we add the corrector term $\epsilon c_1^{\pm} \left(x,\fxe\right)$ in the bulk-domains to obtain the admissible test function
\begin{align*}
\pepm(x)&:= \cepm - c_{\epsilon,\app,2}^{\pm} = \cepm - c_0^{\pm} - \epsilon c_1^{\pm,\bl}\left(x,\fxe\right) &\mbox{ in }& \Omega^{\pm},
\\
\pem(x)&:= \cem - c_{\epsilon,\app,1}^M = \cem - c_0^M - \epsilon c_1^M\left(\x,\fxe\right) &\mbox{ in }& \oem.
\end{align*}
We use this test function in the equation $\eqref{AuxialiaryEquationFirstOrder}$ and obtain
\begin{align*}
\sum_{\pm}& \left[\frac12 \frac{d}{dt} \|\cepm - c_0^{\pm} \|^2_{L^2(\Omega^{\pm})} + \int_{\Omega^{\pm}} D^{\pm} \nabla (\cepm - c_0^{\pm})\cdot \nabla (\cepm - c_0^{\pm}) dx\right]
\\
&+ \frac{1}{2\epsilon}\frac{d}{dt} \|\cem - c_0^M\|_{L^2(\oem)}^2 + \foe \int_{\oem} D^M\left(\fxe\right) \nabla (\cem - c_{\epsilon,\app,1}^M) \cdot \nabla (\cem - c_{\epsilon,\app,1}^M) dx 
\\
=& \Delta_{\epsilon,1} + \foe \int_{\oem } \partial_t (\cem - c_0^M) \epsilon c_1^M\left(\x,\fxe\right) dx 
\\
&+ \sum_{\pm} \left[ \int_{\Omega^{\pm}} \partial_t (\cepm - c_0^{\pm}) \epsilon c_1^{\pm,\bl}\left(x,\fxe\right) dx + \epsilon \int_{\Omega^{\pm}} D^{\pm} \nabla (\cepm - c_0^{\pm} ) \cdot \nabla c_1^{\pm,\bl}\left(x,\fxe\right) dx \right]
\\
=&:\Delta_{\epsilon,1} + C_{\epsilon} + \sum_{\pm}\left[A_{\epsilon}^{\pm} + B_{\epsilon}^{\pm} \right].
\end{align*} 
Using the coercivity of $D^{\pm}$ and $D^M$, we get for a constant $c_0>0$
\begin{align}
\begin{aligned}\label{GeneralErrorEstimateFirstOrderAuxiliaryEstimate}
\sum_{\pm} & \left[ \frac12 \frac{d}{ dt} \|\cepm - c_0^{\pm}\|^2_{L^2(\Omega^{\pm})} + c_0 \big\|\nabla (\cepm - c_0^{\pm} )\big\|^2_{L^2(\Omega^{\pm})} \right]
\\
+& \frac{1}{2\epsilon}\frac{d}{dt} \|\cem - c_0^M\|^2_{L^2(\oem)} + \frac{c_0}{\epsilon} \big\|\nabla (\cem - c_{\epsilon,\app,1}^M)\big\|^2_{L^2(\oem)} 
\\
&\le  \Delta_{\epsilon,1} + C_{\epsilon} + \sum_{\pm}\left[A_{\epsilon}^{\pm} + B_{\epsilon}^{\pm} \right] .
\end{aligned}
\end{align}
From Proposition \ref{EstimateNonlinearTermsLayer} and \ref{EstimateNonlinearTermsBulk}, and Lemma \ref{EstimateDeltaEpsRest}, \ref{ErrorEstimateNormalFlux},  and \ref{ErrorEstimateDeltaTMOne}  we obtain
\begin{align*}
\Delta_{\epsilon,1}\le& C \sum_{\pm} \big[ \|\cepm - c_0^{\pm} \|_{L^2(\Omega^{\pm})}  \|\cepm - c_{\epsilon,\app,2}^{\pm}\|_{L^2(\Omega^{\pm})} 
\\
& \hspace{3em}+ \epsilon \left(1 + \|c_0^{\pm}\|_{H^1(\Omega^{\pm})} \right) \|\cepm - c_{\epsilon,\app,2}^{\pm}\|_{H^1(\Omega^{\pm})} \big]
\\
&+ \frac{C}{\epsilon} \|\cem - c_0^M\|_{L^2(\oem)} \|\cem - c_{\epsilon,\app,1}^M\|_{L^2(\oem)} 
\\
&+   C \sqrt{\epsilon} \left(1 + \|c_0^M\|_{H^2(\Sigma)} + \sum_{\pm} \|c_0^{\pm}\|_{H^2(\Omega^{\pm})} \right) \|\cem - c_{\epsilon,\app,1}^M\|_{H^1(\oem)}
\\
\le& C \sum_{\pm} \bigg[ \big[ \|\cepm - c_0^{\pm} \|^2_{L^2(\Omega^{\pm})} + \epsilon^2 \left\|c_1^{\pm,\bl} \left(\cdot, \frac{\cdot}{\epsilon}\right)\right\|^2_{L^2(\Omega^{\pm})}
\\
& \hspace{3em}+ \epsilon \big( 1 + \|c_0^{\pm}\|_{H^1(\Omega^{\pm})} \big) \big\|\cepm - c_{\epsilon,\app,2}^{\pm}\|_{H^1(\Omega^{\pm})} \bigg)
\\
&+ \frac{C}{\epsilon} \big\|\cem - c_0^M\big\|^2_{L^2(\oem)} + \epsilon \left\|c_1^M \left(\bar{\cdot},\frac{\cdot}{\epsilon}\right)\right\|^2_{L^2(\oem)}
\\
&+ C\sqrt{\epsilon} \left(1 + \|c_0^M\|_{H^2(\Sigma)} + \sum_{\pm} \|c_0^{\pm}\|_{H^2(\Omega^{\pm})} \right) \|\cem - c_{\epsilon,\app,1}^M\|_{H^1(\oem)}.
\end{align*}
Integration with respect to time and Lemma \ref{EstimatesCorrector} yields for almost every $t \in (0,T)$
\begin{align*}
\int_0^t \Delta_{\epsilon,1} dt \le& \frac{C}{\epsilon} \big\|\cem - c_0^M \big\|^2_{L^2((0,t)\times \oem)} + C \sum_{\pm} \big\|\cepm - c_0^{\pm}\big\|^2_{L^2((0,t)\times \Omega^{\pm})}
\\
&+ \epsilon\left( 1 + \|c_0^M\|^2_{L^2((0,T),H^2(\Sigma))} + \sum_{\pm} \|c_0^{\pm}\|^2_{L^2((0,T),H^2(\Omega^{\pm}))}\right). 
\end{align*}
For $A_{\epsilon}^{\pm}$ and $C_{\epsilon}$, we immediately obtain from the H\"older-inequality
\begin{align*}
A_{\epsilon}^{\pm} &\le C\epsilon \|\partial_t (\cepm - c_0^{\pm})\|_{L^2(\Omega^{\pm})} \|c_0^{\pm}\|_{H^1(\Omega^{\pm})},
\\
C_{\epsilon} &\le C\sqrt{\epsilon} \|\partial_t (\cem - c_0^M) \|_{L^2(\oem)} \|c_0^M\|_{H^1(\Sigma)}.
\end{align*}
For $B_{\epsilon}^{\pm}$ we obtain
\begin{align*}
B_{\epsilon}^{\pm}&= \epsilon \sum_{j=1}^{n-1} \int_{\Omega^{\pm}} D^{\pm} \nabla \big(\cepm - c_0^{\pm}) \cdot \bigg[ \psi w_{j,1}^{\pm,\bl}\left(\fxe\right) \nabla \partial_{x_j} c_0^{\pm}
\\ & \hspace{7em}+ e_n \psi' \partial_{x_j} c_0^{\pm} w_{j,1}^{\pm,\bl}\left(\fxe\right) + \foe \psi \partial_{x_j} c_0^{\pm} \nabla_y w_{j,1}^{\pm,\bl}\left(\fxe\right)\bigg] dx 
\\
&\le  C \epsilon \big\|\nabla (\cepm - c_0^{\pm}) \|_{L^2(\Omega^{\pm})} \|c_0^{\pm} \|_{H^2(\Omega^{\pm})}
\\
& \hspace{4em} + C\sum_{j=1}^{n-1} \big\|\nabla (\cepm - c_0^{\pm})\big\|_{L^2(\Omega^{\pm})} \left\|\nabla_y w_{j,1}^{\pm,\bl}\left(\frac{\cdot}{\epsilon}\right)\right\|_{L^2(\Omega^{\pm})} \big\|\nabla_{\x} c_0^{\pm}\big\|_{L^{\infty}(\Omega^{\pm})}
\end{align*}
By a change of variables and the $Y^{n-1}$-periodicity of $w_{j,1}^{\pm,\bl}$, we obtain
\begin{align*}
\left\|\nabla_y w_{j,1}^{+,\bl}\left(\frac{\cdot}{\epsilon}\right)\right\|_{L^2(\Omega^{+})}^2 &\le C \int_{Y^{n-1} \times (0,H)} \left| \nabla_y w_{j,1}^{+,\bl}\left(\y,\frac{x_n}{\epsilon}\right)\right|^2 d\y dx_n
\\
&\le C\epsilon \int_{Y^+} \left| \nabla_y w_{j,1}^{+,\bl}(y) \right|^2 dy \le C\epsilon.
\end{align*}
The term including $\nabla_y w_{j,1}^{-,\bl}$ can be estimated in the same way.  Now, for every $\theta>0$ there exists a constant $C(\theta)>0$, such that for all $a,b\geq 0$ it holds that $ab \le C(\theta)a^2 + \theta b^2$. This implies 
\begin{align*}
B_{\epsilon}^{\pm} \le C(\theta) \epsilon \left(\big\|\nabla c_0^{\pm}\big\|_{L^{\infty}(\Omega^{\pm})} + \|c_0^{\pm}\|_{H^2(\Omega^{\pm})}^2 \right) + \theta \big\|\nabla (\cepm - c_0^{\pm})\big\|^2_{L^2(\Omega^{\pm})}.
\end{align*}
For $\theta< c_0$, the last term on the right-hand side can be absorbed from the left-hand side in $\eqref{GeneralErrorEstimateFirstOrderAuxiliaryEstimate}$. Integration of $\eqref{GeneralErrorEstimateFirstOrderAuxiliaryEstimate}$ with respect to time, Gronwall-inequality, and Lemma \ref{EstimatesCorrector} imply 
\begin{align*}
\sum_{\pm} &\bigg[ \|\cepm - c_0^{\pm} \|^2_{L^{\infty}((0,T),L^2(\Omega^{\pm}))} + \big\|\nabla (\cepm - c_0^{\pm}) \big\|^2_{L^2((0,T),L^2(\Omega^{\pm}))} \bigg]
\\
+& \foe \|\cem - c_0^M\|^2_{L^{\infty}((0,T),L^2(\oem))} + \foe \big\|\nabla (\cem - c_{\epsilon,\app , 1}^M ) \big\|^2_{L^2((0,T),L^2(\oem))}
\\
&\le C \epsilon \sum_{\pm} \left[ \|\partial_t (\cepm - c_0^{\pm}) \|_{L^2((0,T),L^2(\Omega^{\pm}))} \|c_0^{\pm}\|_{L^2((0,T),H^1(\Omega^{\pm}))} \right]
\\
&+ C\sqrt{\epsilon} \|\partial_t (\cem - c_0^M) \|_{L^2((0,T),L^2(\oem))} \|c_0^M\|_{L^2((0,T),H^1(\Sigma))} 
\\
&+C \epsilon \left( 1+ \|c_0^M\|^2_{L^2((0,T),H^2(\Sigma))} + \sum_{\pm}  \left[\|c_0^{\pm}\|^2_{L^2((0,T),H^2(\Omega^{\pm}))} + \big\|\nabla c_0^{\pm}\big\|_{L^{\infty}((0,T)\times \Omega^{\pm})} \right] \right)
\\
&+ \frac{1}{\epsilon} \big\|c_{\epsilon}^{0,M} - c^{0,M} \big\|^2_{L^2(\Sigma)}+ \sum_{\pm} \big\|c_{\epsilon}^{0,\pm} - c^{0,\pm}\big\|^2_{L^2(\Omega^{\pm})} .
\end{align*}
The a priori estimates from Proposition \ref{ExistenceAprioriMicroscopicProblem} and Assumption \ref{VoraussetzungenDuenneSchichtAnfangsbedingungen} imply the desired result.
\end{proof}

As a direct consequence, we obtain Theorem \ref{MainTheoremFirstOrderApproximation}:
\begin{proof}[Proof of Theorem \ref{MainTheoremFirstOrderApproximation}]
This follows directly from Theorem \ref{GeneralErrorEstimateFirstOrderApproximation} and Lemma \ref{EstimatesCorrector}.
\end{proof}

\subsection{Error estimates for the second order approximation}
\label{SectionErrorEstimateSecondOrder}

As in the case of the first order approximation, we have the problem that $\ce - c_{\epsilon,\app,2}$ is not an admissible test function, because it is not continuous across $\sepm$ (after a shift back to the domain $\oe$). We have to add a  corrector $c_2^{\pm}\left(x,\fxe\right)$ to the bulk-approximation $c_{\epsilon,\app,2}^{\pm}$ with $c_2^{\pm} \left.\left(x,\fxe\right)\right|_{\Sigma} = c_2^M\left.\left(\x,\fxe\right)\right|_{\sepm}$. Therefore, we define
\begin{align*}
w_{j,2}^{\pm}(y):= \psi(y_n) w_{j,2}^M(\y,\pm 1),
\end{align*}
\ie we extend the trace of $w_{j,2}^M$ on $S^{\pm}$ constant to the infinite strip $Y^{\pm}$ and multiply it by the cut-off function $\psi$, which was defined in Section \ref{SectionAuxiliaryProblems}. The regularity of $w_{j,2}^M$ and the trace embedding from \cite[Remark 4, Section 2.9.1]{TriebelInterpolationSpacesFunctionSpacesDifferentialOperators} imply $w_{j,2}^{\pm} \in W^{2-\frac{1}{p},p}_{\#}(Y^{\pm})$, and the H\"older-embedding \cite[Theorem 4.6.1(e)]{TriebelInterpolationSpacesFunctionSpacesDifferentialOperators} implies that $w_{j,2}^{\pm}$ and $\nabla_y w_{j,2}^{\pm}$ are essential bounded, \ie we have
\begin{align*}
\|w_{j,2}^{\pm}\|_{W^{1,\infty}(Y^{\pm})} \le C.
\end{align*}
 Now, we define 
\begin{align*}
c_2^{\pm}(x,y):= \sum_{j=1}^{n-1} \partial_{x_j} c_0^{\pm}(x) w_{j,2}^{\pm}(y) \quad \mbox{in } \Omega^{\pm} \times Y^{\pm}.
\end{align*}

\begin{lemma}\label{EstimatesCorrectorSecondOrder}
Let $c_0^{\pm} \in L^2((0,T),H^2(\Omega^{\pm}))$ with $\partial_t c_0^{\pm} \in L^2((0,T),H^1(\Omega^{\pm}))$. Then, for the   correctors $c_1^{\pm,\bl}\left(\cdot,\frac{\cdot}{\epsilon}\right)$ and  $c_2^{\pm}\left(\cdot,\frac{\cdot}{\epsilon}\right)$ in the bulk-domains it holds that
\begin{align*}
\left\|c_2^{\pm}\left(\cdot,\frac{\cdot}{\epsilon}\right)\right\|_{L^2(\Omega^{\pm})} + \epsilon \left\|\nabla c_2^{\pm}\left(\cdot,\frac{\cdot}{\epsilon}\right) \right\|_{L^2(\Omega^{\pm})} &\le C \|c_0^{\pm}\|_{H^2(\Omega^{\pm})},
\\
\left\|\partial_t c_1^{\pm,\bl}\left(\cdot,\frac{\cdot}{\epsilon}\right)\right\|_{L^2(\Omega^{\pm})} &\le C \|\partial_t c_0^{\pm} \|_{H^1(\Omega^{\pm})}.
\end{align*}

For $c_0^M\in L^2((0,T),H^2(\Sigma))$ with $\partial_t c_0^M \in L^2((0,T),H^1(\Sigma))$, the correctors $c_1^M\left(\bar{\cdot},\frac{\cdot}{\epsilon}\right)$ and $c_2^M\left(\bar{\cdot},\frac{\cdot}{\epsilon}\right)$ in the thin layer fulfill the following inequalities
\begin{align*}
\frac{1}{\sqrt{\epsilon}}\left\|c_2^M\left(\bar{\cdot} , \frac{\cdot}{\epsilon}\right)\right\|_{L^2(\oem)} 
+ \sqrt{\epsilon} \left\|\nabla c_2^M\left(\bar{\cdot} , \frac{\cdot}{\epsilon}\right)\right\|_{L^2(\oem)} &\le C \|c_0^M\|_{H^2(\Sigma)},
\\
\left\|\partial_t c_1^M \left(\bar{\cdot},\frac{\cdot}{\epsilon}\right)\right\|_{L^2(\oem)} &\le C \sqrt{\epsilon} \|\partial_t c_0^M \|_{H^1(\Sigma)}.
\end{align*} 
\end{lemma}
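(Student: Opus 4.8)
The plan is to follow the same strategy as in the proof of Lemma~\ref{EstimatesCorrector}, reducing every bound to the uniform $C^1$- respectively $W^{1,\infty}$-estimates for the cell and boundary layer functions established in Lemmas~\ref{CellProblemFirstOrderLayerExistence},~\ref{BoundaryLayerBulkExistence} and~\ref{CellProblemsSecondOrderLayerExistence}, together with careful book-keeping of the powers of $\epsilon$ produced by the fast variable $x/\epsilon$ and by the thickness $2\epsilon$ of the layer.

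For the bulk correctors I would first note, by the chain rule,
\begin{align*}
\nabla\!\left(c_2^{\pm}\!\left(x,\tfrac{x}{\epsilon}\right)\right) = \sum_{j=1}^{n-1}\left[\nabla_x \partial_{x_j} c_0^{\pm}(x)\, w_{j,2}^{\pm}\!\left(\tfrac{x}{\epsilon}\right) + \tfrac{1}{\epsilon}\,\partial_{x_j} c_0^{\pm}(x)\,\big(\nabla_y w_{j,2}^{\pm}\big)\!\left(\tfrac{x}{\epsilon}\right)\right],
\end{align*}
so that, using $\|w_{j,2}^{\pm}\|_{W^{1,\infty}(Y^{\pm})}\le C$ and the H\"older inequality, $\|c_2^{\pm}(\cdot,\cdot/\epsilon)\|_{L^2(\Omega^{\pm})} \le C\|\nabla_{\x} c_0^{\pm}\|_{L^2(\Omega^{\pm})}$ and $\|\nabla c_2^{\pm}(\cdot,\cdot/\epsilon)\|_{L^2(\Omega^{\pm})} \le C\big(\|c_0^{\pm}\|_{H^2(\Omega^{\pm})} + \tfrac{1}{\epsilon}\|\nabla_{\x} c_0^{\pm}\|_{L^2(\Omega^{\pm})}\big)$; multiplying the latter by $\epsilon$ absorbs the singular term and yields the first claimed inequality. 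For $\partial_t c_1^{\pm,\bl}$ the key point is that the time derivative acts only on $c_0^{\pm}$, because $\psi(x_n)$ and $w_{j,1}^{\pm,\bl}(x/\epsilon)$ are independent of $t$; hence $\partial_t c_1^{\pm,\bl}(t,x,x/\epsilon)=\psi(x_n)\sum_{j=1}^{n-1} \partial_{x_j}\partial_t c_0^{\pm}(t,x)\,w_{j,1}^{\pm,\bl}(x/\epsilon)$, and $0\le\psi\le1$ together with the uniform bound on $w_{j,1}^{\pm,\bl}$ from Lemma~\ref{BoundaryLayerBulkExistence} gives the bound by $\|\partial_t c_0^{\pm}\|_{H^1(\Omega^{\pm})}$.

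For the layer correctors the same computations apply with $w_{j,2}^M$, resp.\ $w_{j,1}^M$, in place of the bulk functions, the additional ingredient being that integrating a function depending only on $\x$ over $\oem=\Sigma\times(-\epsilon,\epsilon)$ produces a factor $2\epsilon$, while an $L^{\infty}$-bound in the fast variable contributes nothing further. Thus $\|c_2^M(\bar{\cdot},\cdot/\epsilon)\|_{L^2(\oem)}^2 \le C\epsilon\,\|\nabla_{\x} c_0^M\|_{L^2(\Sigma)}^2$ and $\|\nabla c_2^M(\bar{\cdot},\cdot/\epsilon)\|_{L^2(\oem)}^2 \le C\epsilon\big(\|c_0^M\|_{H^2(\Sigma)}^2 + \epsilon^{-2}\|\nabla_{\x} c_0^M\|_{L^2(\Sigma)}^2\big)$, using $\|w_{j,2}^M\|_{C^1(\overline{Z})}\le C$ from Lemma~\ref{CellProblemsSecondOrderLayerExistence}; rescaling by $\epsilon^{-1/2}$ and $\epsilon^{1/2}$ respectively gives the asserted estimate. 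Finally $\partial_t$ again acts only on $c_0^M$ in $c_1^M$, so $\|\partial_t c_1^M(\bar{\cdot},\cdot/\epsilon)\|_{L^2(\oem)}^2 \le C\epsilon\,\|\partial_t c_0^M\|_{H^1(\Sigma)}^2$ by $\|w_{j,1}^M\|_{C^1(\overline Z)}\le C$, which is the last inequality. There is no substantive obstacle here: the only thing requiring care is to balance the $\epsilon^{-1}$ coming from differentiating the oscillating argument against the prefactors $\epsilon^{\pm1/2}$ and the $\sqrt{\epsilon}$ from the measure of the layer; everything else is the H\"older inequality and the uniform bounds already established.
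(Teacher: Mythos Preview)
Your proposal is correct and follows essentially the same approach as the paper: apply the chain rule to the oscillating functions, invoke the uniform $W^{1,\infty}$-bounds on $w_{j,1}^{\pm,\bl}$, $w_{j,2}^{\pm}$, $w_{j,1}^M$, $w_{j,2}^M$ from the relevant lemmas, and track the powers of $\epsilon$ (with the extra $\sqrt{\epsilon}$ in the layer coming from its thickness). In fact the paper only writes out the estimate for $\nabla c_2^{\pm}(\cdot,\cdot/\epsilon)$ and leaves the rest implicit, so your write-up is more detailed than what appears there.
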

\begin{proof}
We only give the proof for $\nabla c_2^{\pm} \left(\cdot ,\frac{\cdot}{\epsilon}\right)$. From the essential boundedness of $w_{j,2}^{\pm}$ and $\nabla_y w_{j,2}^{\pm}$, we immediately obtain
\begin{align*}
 \left\|\nabla c_2^{\pm}\left(\cdot,\frac{\cdot}{\epsilon}\right) \right\|_{L^2(\Omega^{\pm})} &\le \sum_{j=1}^{n-1} \left\|\nabla \partial_{x_j} c_0^{\pm} w_{j,2}^{\pm} \left(\frac{\cdot}{\epsilon}\right) + \foe \partial_{x_j} c_0^{\pm} \nabla_y w_{j,2}^{\pm} \left(\frac{\cdot}{\epsilon}\right) \right\|_{L^2(\Omega^{\pm})}
 \\
&\le \frac{C}{\epsilon} \|c_0^{\pm}\|_{H^2(\Omega^{\pm})}.
\end{align*}
\end{proof}

\begin{theorem}\label{GeneralErrorEstimateSecondOrderApproximation}
Let $c_0^{\pm} \in L^2((0,T),H^2(\Omega^{\pm}))$ with $\partial_t c_0^{\pm} \in L^2((0,T),H^1(\Omega^{\pm}))$ and $c_0^M \in L^2((0,T),H^2(\Sigma))$ with $\partial_t c_0^M \in L^2((0,T),H^1(\Sigma))$. Then, the following error estimate is valid
\begin{align*}
\sum_{\pm} &\bigg[ \|\cepm - c_{\epsilon,\app,2}^{\pm} \|_{L^{\infty}((0,T),L^2(\Omega^{\pm}))} + \big\|\nabla (\cepm - c_{\epsilon,\app , 2}^{\pm}) \big\|_{L^2((0,T),L^2(\Omega^{\pm}))} \bigg]
\\
+& \frac{1}{\sqrt{\epsilon}} \|\cem - c_{\epsilon,\app,1}^M\|_{L^{\infty}((0,T),L^2(\oem))} + \frac{1}{\sqrt{\epsilon}} \big\|\nabla (\cem - c_{\epsilon,\app , 2}^M ) \big\|_{L^2((0,T),L^2(\oem))} 
\\
&\le C \epsilon \bigg( 1 + \|c_0^M\|_{L^2((0,T),H^2(\Sigma))} + \|\partial_t c_0^M \|_{L^2((0,T),H^1(\Sigma))} + \|c^{0,M}\|_{H^1(\Sigma)} \\
& \hspace{2em} + \sum_{\pm} \left[ \|c_0^{\pm}\|_{L^2((0,T),H^2(\Omega^{\pm}))} + \|\partial_t c_0^{\pm} \|_{L^2((0,T),H^1(\Omega^{\pm}))} + \|c^{0,\pm}\|_{H^1(\Omega^{\pm})}\right] \bigg).
\end{align*}
\end{theorem}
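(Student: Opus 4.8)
The plan is to follow the scheme of the proof of Theorem~\ref{GeneralErrorEstimateFirstOrderApproximation}, but with a test function adapted to the second order approximation. Since $\ce-c_{\epsilon,\app,2}$ jumps across $\sepm$ (Remark~\ref{RemarkUnstetigkeitApproximationZweiterOrdnung}), I would first add to the bulk approximation the extra corrector $c_2^{\pm}\left(x,\fxe\right)$ built from $w_{j,2}^{\pm}(y)=\psi(y_n)w_{j,2}^M(\y,\pm 1)$, which by construction has the same trace on $\sepm$ as $c_2^M\left(\x,\fxe\right)$. Then take as test function
\begin{align*}
\pepm &= \cepm - c_{\epsilon,\app,2}^{\pm} - \epsilon^2 c_2^{\pm}\left(x,\fxe\right) \\
&= \cepm - c_0^{\pm} - \epsilon c_1^{\pm,\bl}\left(x,\fxe\right) - \epsilon^2 c_2^{\pm}\left(x,\fxe\right), \\
\pem &= \cem - c_{\epsilon,\app,2}^M = \cem - c_0^M - \epsilon c_1^M\left(\x,\fxe\right) - \epsilon^2 c_2^M\left(\x,\fxe\right).
\end{align*}
On $\sepm$ one has $\cepm=\cem$ and $c_0^{\pm}=c_0^M$, $c_1^{\pm,\bl}$ matches $c_1^M$ by the boundary condition in \eqref{BoundaryLayerBulk}, and $c_2^{\pm}$ matches $c_2^M$ by construction, so $\pepm|_{\Sigma}=\pem|_{\sepm}$ and $(\pepm,\pem)$ is admissible in Proposition~\ref{ZusammenfassungAbschaetzungFehlertermeZweiterOrdnung}. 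Note also that $\cepm-c_{\epsilon,\app,2}^{\pm}$ and $\cem-c_{\epsilon,\app,1}^M$ lie in $H^1((0,T),L^2)\cap L^2((0,T),H^1)$ by Proposition~\ref{ExistenceAprioriMicroscopicProblem} and Lemma~\ref{EstimatesCorrectorSecondOrder}, which legitimizes the chain rule used below.

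Inserting $(\pepm,\pem)$ into \eqref{SummaryEstimatesErrorTermsProp}, the time-derivative terms on the left produce $\tfrac12\tfrac{d}{dt}\|\cepm-c_{\epsilon,\app,2}^{\pm}\|_{L^2(\Omega^{\pm})}^2$ and $\tfrac1{2\epsilon}\tfrac{d}{dt}\|\cem-c_{\epsilon,\app,1}^M\|_{L^2(\oem)}^2$, the diffusion terms give by coercivity the dissipation $c_0\|\nabla(\cepm-c_{\epsilon,\app,2}^{\pm})\|_{L^2(\Omega^{\pm})}^2+\tfrac{c_0}{\epsilon}\|\nabla(\cem-c_{\epsilon,\app,2}^M)\|_{L^2(\oem)}^2$, and there remain the three correction integrals $\epsilon^2\int_{\Omega^{\pm}}\partial_t(\cepm-c_{\epsilon,\app,2}^{\pm})\,c_2^{\pm}\left(x,\fxe\right)dx$, $\epsilon\int_{\oem}\partial_t(\cem-c_{\epsilon,\app,1}^M)\,c_2^M\left(\x,\fxe\right)dx$ and $\epsilon^2\int_{\Omega^{\pm}}D^{\pm}\nabla(\cepm-c_{\epsilon,\app,2}^{\pm})\cdot\nabla c_2^{\pm}\left(x,\fxe\right)dx$. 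Moving these and the whole right-hand side of \eqref{SummaryEstimatesErrorTermsProp} over, I would estimate everything with the a priori bounds of Proposition~\ref{ExistenceAprioriMicroscopicProblem}, the corrector estimates of Lemmas~\ref{EstimatesCorrector} and \ref{EstimatesCorrectorSecondOrder} (in particular $\|c_1^M\left(\bar{\cdot},\tfrac{\cdot}{\epsilon}\right)\|_{L^2(\oem)}\le C\sqrt{\epsilon}\|c_0^M\|_{H^2(\Sigma)}$, $\epsilon\|\nabla c_2^{\pm}\left(\cdot,\tfrac{\cdot}{\epsilon}\right)\|_{L^2(\Omega^{\pm})}\le C\|c_0^{\pm}\|_{H^2(\Omega^{\pm})}$, $\|\partial_t c_1^M\left(\bar{\cdot},\tfrac{\cdot}{\epsilon}\right)\|_{L^2(\oem)}\le C\sqrt{\epsilon}\|\partial_t c_0^M\|_{H^1(\Sigma)}$), and Young's inequality with a small parameter $\theta$. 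The accounting is arranged so that each resulting term is either a multiple of $\|\cepm-c_{\epsilon,\app,2}^{\pm}\|_{L^2(\Omega^{\pm})}^2$ or $\tfrac1\epsilon\|\cem-c_{\epsilon,\app,1}^M\|_{L^2(\oem)}^2$ (retained for Gronwall), a $\theta$-multiple of the dissipation (absorbed into the left-hand side for $\theta<c_0$), or bounded by $C\epsilon^2$ times the data norms appearing in $C_2$.

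I would then integrate in time from $0$ to $t$ and treat the initial data with Assumption~\ref{VoraussetzungenDuenneSchichtAnfangsbedingungen}: \eqref{ErrorEstimateInitialValues} combined with $\|c_1^{\pm,\bl}\left(0,\cdot,\tfrac{\cdot}{\epsilon}\right)\|_{L^2(\Omega^{\pm})}\le C\|c^{0,\pm}\|_{H^1(\Omega^{\pm})}$ and $\tfrac1{\sqrt{\epsilon}}\|c_1^M\left(0,\bar{\cdot},\tfrac{\cdot}{\epsilon}\right)\|_{L^2(\oem)}\le C\|c^{0,M}\|_{H^1(\Sigma)}$ (which follow from the essential boundedness of $w_{j,1}^{\pm,\bl}$ and $w_{j,1}^M$) bounds $\sum_{\pm}\|\cepm(0)-c_{\epsilon,\app,2}^{\pm}(0)\|_{L^2(\Omega^{\pm})}^2+\tfrac1\epsilon\|\cem(0)-c_{\epsilon,\app,1}^M(0)\|_{L^2(\oem)}^2$ by $C\epsilon^2$ times $1+\|c^{0,M}\|_{H^1(\Sigma)}^2+\sum_{\pm}\|c^{0,\pm}\|_{H^1(\Omega^{\pm})}^2$. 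Gronwall's lemma applied to $t\mapsto\sum_{\pm}\|\cepm-c_{\epsilon,\app,2}^{\pm}(t)\|_{L^2(\Omega^{\pm})}^2+\tfrac1\epsilon\|\cem-c_{\epsilon,\app,1}^M(t)\|_{L^2(\oem)}^2$ then gives the squared form of the asserted estimate, and taking square roots concludes; Theorem~\ref{MainTheoremSecondOrderApproximation} follows by the triangle inequality and Lemma~\ref{EstimatesCorrectorSecondOrder} (which controls $c_{\epsilon,\app,2}^M-c_{\epsilon,\app,1}^M$).

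The step I expect to be the main obstacle is the $\epsilon$-accounting on the right-hand side of \eqref{SummaryEstimatesErrorTermsProp} together with the three correction integrals: one has to verify that after Young's inequality nothing of order lower than $\epsilon^2$ survives outside the Gronwall and absorbable terms. The delicate instance is the nonlinear layer term $\tfrac{C}{\epsilon}\|\cem-c_0^M\|_{L^2(\oem)}\|\pem\|_{L^2(\oem)}$, whose $\tfrac1\epsilon$ prefactor looks threatening but which, after expanding $\cem-c_0^M=(\cem-c_{\epsilon,\app,1}^M)+\epsilon c_1^M$ and $\pem=(\cem-c_{\epsilon,\app,1}^M)-\epsilon^2 c_2^M$ and using $\|c_1^M\|_{L^2(\oem)},\|c_2^M\|_{L^2(\oem)}\le C\sqrt{\epsilon}\|c_0^M\|_{H^2(\Sigma)}$, reduces to $\tfrac{C}{\epsilon}\|\cem-c_{\epsilon,\app,1}^M\|_{L^2(\oem)}^2$ plus terms of order $\epsilon^2$; the bulk nonlinear term is handled analogously with $\|c_1^{\pm,\bl}\|_{L^2(\Omega^{\pm})}\le C\|c_0^{\pm}\|_{H^2(\Omega^{\pm})}$. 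The genuinely new analytic ingredient --- the representation of the solenoidal fields in $\Delta_{\epsilon,T}$ by divergences of skew-symmetric tensors that are continuous across $S^{\pm}$ (Lemmas~\ref{SkewSymmetricTensorBoundaryLayer} and \ref{ErrorEstimateDeltaTpmblTMtwo}) --- has already been absorbed into Proposition~\ref{ZusammenfassungAbschaetzungFehlertermeZweiterOrdnung} and is used here only as a black box.
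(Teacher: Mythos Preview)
Your proposal is correct and follows essentially the same route as the paper: the same auxiliary corrector $c_2^{\pm}(x,\tfrac{x}{\epsilon})$ built from $w_{j,2}^{\pm}(y)=\psi(y_n)w_{j,2}^M(\bar y,\pm1)$, the same admissible test function $(\pepm,\pem)=(\cepm-c_{\epsilon,\app,2}^{\pm}-\epsilon^2 c_2^{\pm},\ \cem-c_{\epsilon,\app,2}^M)$ inserted into Proposition~\ref{ZusammenfassungAbschaetzungFehlertermeZweiterOrdnung}, and the same splitting into the three correction integrals (your terms are the paper's $A_\epsilon^{\pm},B_\epsilon^{\pm},C_\epsilon$) plus an initial-value term, followed by Young and Gronwall. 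Your handling of the delicate $\tfrac{C}{\epsilon}\|\cem-c_0^M\|_{L^2(\oem)}\|\pem\|_{L^2(\oem)}$ term by expanding around $c_{\epsilon,\app,1}^M$ is exactly what the paper does.
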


\begin{proof}
We start from the inequality $\eqref{SummaryEstimatesErrorTermsProp}$ from Proposition \ref{ZusammenfassungAbschaetzungFehlertermeZweiterOrdnung} and choose as a test function
\begin{align*}
\pepm:=& \cepm - c_{\epsilon,\app,3}^{\pm} = \cepm - c_{\epsilon,\app,2}^{\pm} - \epsilon^2 c_2^{\pm}\left(\cdot, \frac{\cdot}{\epsilon}\right) &\mbox{ in } \Omega^{\pm},
\\
\pem:=& \cem - c_{\epsilon,\app,2}^M &\mbox{ in } \oem.
\end{align*}
Let us denote by $\Delta_{\epsilon,2}$ the terms on the right-hand side in the inequality $\eqref{SummaryEstimatesErrorTermsProp}$.  Then, the coercivity of $D^M$ and $D^{\pm}$ implies for a constant $c_0>0$ after integration with respect to time that for almost every $t\in(0,T)$ it holds that
\begin{align}
\begin{aligned}\label{GeneralErrorEstimateSecondOrderAuxiliaryEstimate}
\sum_{\pm} &\left[ \frac12  \|\cepm - c_{\epsilon,\app,2}^{\pm}\|^2_{L^2(\Omega^{\pm})} + c_0 \big\|\nabla (\cepm - c_{\epsilon,\app,2}^{\pm} \big\|^2_{L^2((0,t)\times \Omega^{\pm})}\right]
\\
&+ \frac{1}{2\epsilon} \|\cem - c_{\epsilon,\app,1}^M\|^2_{L^2(\oem)} + \frac{c_0}{\epsilon} \big\|\nabla (\cem - c_{\epsilon,\app,2}^M)\big\|^2_{L^2((0,t)\times \oem)}
\\
\le& \int_0^t \Delta_{\epsilon,2}dt + \frac{1}{2\epsilon}\|\cem(0) - c_{\epsilon,\app,1}^M(0)\|^2_{L^2(\oem)} 
+ \sum_{\pm} \left[ \frac12 \|\cepm(0) - c_{\epsilon,\app,2}^{\pm}(0)\|^2_{L^2(\Omega^{\pm})}\right]
\\
& +\foe \int_0^t\int_{\oem} \partial_t (\cem - c_{\epsilon,\app,1}^M) \epsilon^2 c_2^M\left(\x,\fxe\right)dx dt
\\
&+ \sum_{\pm} \bigg[ \int_0^t\int_{\Omega^{\pm}} \partial_t (\cepm - c_{\epsilon,\app,2}^{\pm}) \epsilon^2 c_2^{\pm}\left(x,\fxe\right) dxdt
\\ & \hspace{3em} + \epsilon^2 \int_0^t\int_{\Omega^{\pm}} D^{\pm} \nabla (\cepm - c_{\epsilon,\app,2}^{\pm}) \cdot \nabla c_2^{\pm}\left(x,\fxe\right)dx dt \bigg]
\\
=&: \Delta_{\epsilon,2,t} + \Delta_{\epsilon,0} +  C_{\epsilon} + \sum_{\pm}\left[A_{\epsilon}^{\pm} + B_{\epsilon}^{\pm}\right].
\end{aligned}
\end{align}
Here, $\Delta_{\epsilon,0}$ includes all terms containing initial values.
Let us estimate the terms on the right-hand side. Some elemental calculations and the estimates from Proposition \ref{ExistenceAprioriMicroscopicProblem} and Lemma \ref{EstimatesCorrector} and \ref{EstimatesCorrectorSecondOrder} give us (we emphasize that these equations are valid pointwise in time)
\begin{align*}
\|\cepm - c_0^{\pm}\|_{L^2(\Omega^{\pm})} \|\cepm - c_{\epsilon,\app,3}^{\pm}\|_{L^2(\Omega^{\pm})} &\le C \|\cepm - c_{\epsilon,\app,2}^{\pm}\|_{L^2(\Omega^{\pm})}^2 
\\
& \hspace{5em}+ C\epsilon^2 \left(1 + \|c_0^{\pm}\|^2_{H^1(\Omega^{\pm})}\right),
\\
\foe \|\cem - c_0^M\|_{L^2(\oem)} \|\cem - c_{\epsilon,\app,2}^M\|_{L^2(\oem)} &\le \frac{C}{\epsilon} \|\cem - c_{\epsilon,\app,1}^M\|^2_{L^2(\oem)} 
\\
 &\hspace{5em}+ C\epsilon^2 \left(1  + \|c_0^M\|^2_{L^2(\Sigma)} \right),
\\
\|\cepm - c_{\epsilon,\app,3}^{\pm}\|_{H^1(\Omega^{\pm})} \le \|\cepm - c_{\epsilon,\app,2}^{\pm}&\|_{H^1(\Omega^{\pm})} + C\epsilon \|c_0^{\pm}\|_{H^1(\Omega^{\pm})},
\\
\frac{1}{\sqrt{\epsilon}} \|\cem - c_{\epsilon,\app,2}^M\|_{H^1(\oem)} \le \frac{1}{\sqrt{\epsilon}} \|&\cem - c_{\epsilon,\app,1}^M \|_{H^1(\oem)} + C\epsilon \|c_0^M\|_{H^2(\Sigma)}.
\end{align*}
These estimates  imply (together with the inequality $ab \le C(\theta)a^2 + \theta b^2$) almost everywhere in $(0,T)$
\begin{align*}
\Delta_{\epsilon,2} \le& C \sum_{\pm} \left[ \|\cepm - c_{\epsilon,\app,2}^{\pm} \|^2_{L^2(\Omega^{\pm})} +\epsilon^2 \left( 1 + \|\partial_t c_0^{\pm}\|_{H^1(\Omega^{\pm})} + \|c_0^{\pm}\|^2_{H^1(\Omega^{\pm})} \right)  \right]
\\
&+ \frac{c_0}{4} \sum_{\pm} \left[\big\|\nabla (\cepm - c_{\epsilon,\app,2}^{\pm})\big\|^2_{L^2(\Omega^{\pm})}\right] + \frac{C}{\epsilon} \|\cem - c_{\epsilon,\app,1}^M\|^2_{L^2(\oem)} 
\\
&+C \epsilon^2 \left( 1 + \|\partial_t c_0^M\|^2_{H^1(\Sigma)} + \|c_0^M\|^2_{H^2(\Sigma)} \right) + \frac{c_0}{4\epsilon} \big\|\nabla (\cem - c_{\epsilon,\app,2}^M\big\|^2_{L^2(\oem)}.
\end{align*}
Using again Proposition \ref{ExistenceAprioriMicroscopicProblem} and Lemma \ref{EstimatesCorrector} and \ref{EstimatesCorrectorSecondOrder}, we obtain for almost every $t\in (0,T)$
\begin{align*}
A_{\epsilon}^{\pm} &\le C\epsilon^2 \left( 1 + \|\partial_t c_0^{\pm}\|^2_{L^2((0,t),H^1(\Omega{\pm}))} +  \|c_0^{\pm}\|_{L^2((0,t),H^2(\Omega^{\pm}))} \right),
\\
B_{\epsilon}^{\pm} &\le C\epsilon^2 \|c_0^{\pm}\|^2_{L^2((0,t),H^2(\Omega^{\pm}))} + \frac{c_0}{4} \big\|\nabla (\cepm - c_{\epsilon,\app,2}^{\pm} )\big\|_{L^2((0,t)\times \Omega^{\pm})}^2,
\\
C_{\epsilon} &\le C \epsilon^2 \left( 1 + \|\partial_t c_0^M\|^2_{H^1(\Sigma)} + \|c_0^M\|_{L^2((0,t),H^2(\Sigma))}^2 \right).
\end{align*} 
It remains to estimate the initial term $\Delta_{\epsilon,0}$. Assumption \ref{VoraussetzungenDuenneSchichtAnfangsbedingungen} and \ref{ZusatzbedingungAW}, the regularity of $c_0^{\pm}$ and $c_0^M$, Lemma \ref{EstimatesCorrector}, and the essential boundedness of $w_{j,1}^{\pm,\bl}$ and $w_{j,1}^M$ imply
\begin{align*}
\Delta_{\epsilon,0} \le& \frac{C}{\epsilon} \left( \left\|c_{\epsilon}^{0,M} - c^{0,M} \right\|^2_{L^2(\oem)} + \epsilon^2 \left\|c_1^M\left(0,\x,\fxe\right)\right\|^2_{L^2(\oem)} \right)
\\
&+ \sum_{\pm} \left( \left\|c_{\epsilon}^{0,\pm} - c^{0,\pm} \right\|^2_{L^2(\Omega^{\pm})} + \epsilon^2 \left\|c_1^{\pm} \left(0,x,\fxe\right)\right\|^2_{L^2(\Omega^{\pm})}\right)
\\
\le& C \epsilon^2 \left(1 + \|c^{0,M}\|^2_{H^1(\Sigma)} + \|c^{0,\pm}\|^2_{H^1(\Omega^{\pm})}\right).
\end{align*}

Plugging in these estimates in $\eqref{GeneralErrorEstimateSecondOrderAuxiliaryEstimate}$, we obtain the desired result.
\end{proof}

As an immediate consequence we obtain Theorem \ref{MainTheoremSecondOrderApproximation}.

\begin{proof}
[Proof of Theorem \ref{MainTheoremSecondOrderApproximation}]
This is a direct consequence of Theorem \ref{GeneralErrorEstimateSecondOrderApproximation} and Proposition \ref{ExistenceAprioriMicroscopicProblem}.
\end{proof}

\begin{remark}\label{BemerkungVerallgemeinerungen}\mbox{}
\begin{enumerate}
[label = (\roman*)]
\item Our results are not restricted to the Assumptions \ref{VoraussetzungenDuenneSchichtReaktionskinetikBulk} - \ref{VoraussetzungenDuenneSchichtAnfangsbedingungen}, and \ref{ZusatzbedingungFplusminus} - \ref{ZusatzbedingungAW}. In fact, the error estimates from Theorem \ref{MainTheoremFirstOrderApproximation} and \ref{MainTheoremSecondOrderApproximation} remain valid, if the microscopic solution fulfills the a priori estimates from Proposition \ref{ExistenceAprioriMicroscopicProblem}, the nonlinear functions fulfill the estimates from Proposition \ref{EstimateNonlinearTermsLayer} and \ref{EstimateNonlinearTermsBulk}, the initial values fulfill the error estimate $\eqref{ErrorEstimateInitialValues}$, and the macroscopic solution fulfills the regularity hypothesis from Theorem \ref{MainTheoremFirstOrderApproximation} and \ref{MainTheoremSecondOrderApproximation}.

\item Theorem \ref{GeneralErrorEstimateFirstOrderApproximation} and therefore Theorem \ref{MainTheoremFirstOrderApproximation} still hold if we replace the condition $\eqref{ErrorEstimateInitialValues}$ in Assumption \ref{VoraussetzungenDuenneSchichtAnfangsbedingungen} by the weaker condition
\begin{align*}
\big\|c_{\epsilon}^{0,\pm} - c^{0,\pm}\big\|_{L^2(\Omega^{\pm})} + \frac{1}{\sqrt{\epsilon}} \big\|c_{\epsilon}^{0,M} - c^{0,M} \big\|_{L^2(\oem)} \le C \sqrt{\epsilon}.
\end{align*}
This can be easily seen from the last inequality in the proof of Theorem \ref{GeneralErrorEstimateFirstOrderApproximation}. In this case the error for the second order approximation is also of order $\epsilon^{\frac12}$, \ie 
no better error estimate is obtained by using the improved approximation $c_{\epsilon,\app,2}$.
\end{enumerate}

\end{remark}

\section{Conclusion and outlook}
In our paper, we derived approximations $c_{\epsilon,\app,j}$ of order $j=1$ and $j=2$ for the microscopic solution $\ce = (\ce^+,\cem,\ce^-)$ of the microscopic model $\eqref{MicroscopicModel}$ describing nonlinear and nonstationary reaction-diffusion processes in media separated by thin periodically heterogeneous layers. The approximations are constructed via the solution of the macroscopic model $\eqref{MacroscopicProblemZeroOrder}$ and additional correctors including solution of cell problems and boundary layers. The challenging aspects are related to the simultaneous scale transition for the thickness of the layer and the periodicity within the layer, the coupling between the bulk domains and the thin layer, and the nonlinear reaction kinetics.

Using the iterative procedure of our paper, it is possible to construct approximations of higher order. However, in the general setting considered in this paper it is not possible to obtain higher order error estimates. 
First of all, the oscillations in the reaction-kinetics lead to an error of order $\epsilon$ (even if the reaction term is independent of the solution). If we consider nonlinear reaction-kinetics independent of the oscillating variable, we need additional regularity for the nonlinear functions (Taylor-expansion) to perform the asymptotic expansion. In our applications, however, see e.g., 
\cite{GahnNeussRaduKnabnerEffectiveModelSubstrateChanneling}, where Michaelis-Menten-type kinetics are required, we can only expect to have Lipschitz regularity. Last but not least, due to different geometrical structures present in the microscopic and the macroscopic model (the thin layer reduces to a lower dimensional interface), we obtain an approximation error of order $\epsilon$, see Lemma 15, 
and it is not clear how this error can be reduced further. We emphasize that in our proof the derivation of the $L^2$-error and the $H^1$-error go hand in hand, hence it is not possible to improve the $L^2$-error.

In this paper we only considered the case of high diffusion in the thin layer, i.e., the diffusion coefficient in the layer is of order $\epsilon^{-1}$. Here, the limit solution in the layer does not depend on the microscopic variable $y\in Z$. This is also the case for moderate diffusion, when $\gamma \in (-1,1)$, see \cite{GahnEffectiveTransmissionContinuous}.
However, in this case the diffusion term in the homogenized problem in the layer vanishes and the homognized solution in the layer satisfies  an ordinary differential equation. In the case of very small diffusion, when $\gamma = 1$, see \cite{NeussJaeger_EffectiveTransmission},
the macroscopic model depends on both, the microscopic variable $y\in Z$ and the macroscopic variable $x \in \Sigma$. This completely different structure of the macroscopic problem in the case $\gamma =1$ compared to the cases $\gamma \in [-1, 1)$ does not permit to derive error estimates by the methods developed in this paper directly. On the other hand, for the case $\gamma \in (-1,1)$, we see the possibility to add an additional term in the macroscopic equation at the interface $\Sigma$, namely
$$
\epsilon^{\gamma+ 1} \nabla_{x} \cdot \big(D^{M,\ast}\nabla_{x} \big),
$$
to obtain a reaction-diffusion equation on $\Sigma$, as in the case $\gamma = -1$ considered in this paper:
\begin{eqnarray*}
[D^{\pm} \nabla c_{0,\epsilon}^{\pm} \cdot \nu ]\!] &=& |Z|\partial_t c_{0,\epsilon}^M - \epsilon^{1  + \gamma}|Z|\nabla_{x} \cdot \left(D^{M,\ast}\nabla_{x} c_{0,\epsilon}^M\right) \\
&- &\int_Z g^M(t,y,c_{0,\epsilon}^M) dy\quad  \mbox{ on } (0,T)\times \Sigma.
\end{eqnarray*}
Then we can use the same methods as above and expect error estimates depending on $\gamma$, and getting worse in the critical limit $\gamma \to 1$. 
However, this is part of ongoing work.

\appendix

\section{Regularity proofs for the macroscopic solution}
\label{SectionRegularity}

In this section, we give the proof of the regularity results in Proposition \ref{RegularityResultsMacroscopicSolution} without going into detail. We shortly write:
\begin{align*}
V^{\pm}&:= L^{\infty}((0,T),L^2(\Omega^{\pm})) \cap L^2((0,T),H^1(\Omega^{\pm})),
\\
V^M&:= L^{\infty}((0,T),L^2(\Sigma)) \cap L^2((0,T),H^1(\Sigma)),
\end{align*}
and 
\begin{align*}
\|u^{\pm}\|_{V^{\pm}}&:= \|u\|_{L^{\infty}((0,T),L^2(\Omega^{\pm}))} + \|u^{\pm}\|_{L^2((0,T),H^1(\Omega^{\pm}))},
\\
\|u^M\|_{V^M} &:= \|u^M\|_{L^{\infty}((0,T),L^2(\Sigma))} + \|u^M\|_{L^2((0,T),H^1(\Sigma))}.
\end{align*}
Further we define for $M\in \R$ and $t\in (0,T)$, and given $u^{\pm} \in V^{\pm}$ and $u^M \in V^M$  the sets
\begin{align*}
\Omega_M^{\pm}(t):= \{x \in \Omega^{\pm} \, : \, u^{\pm}(t,x)>M\}, \quad \Sigma_M(t):= \{x\in \Sigma \, : \, u^M(t,x) > M \}.
\end{align*}
The following technical lemma is an extension of \cite[Chapter II, Theorem 6.1]{Ladyzenskaja} for our geometrical setting:
\begin{lemma}\label{VerallgemeinerungLady}
Let $u^{\pm} \in V^{\pm}$, $u^M\in V^M$,  $\widehat{M} \geq 0$, and assume that there exist $C >0 $ and $\kappa >0$, such that for all $M \geq \widehat{M}$ it holds that
\begin{align*}
\|(u^M-M)_+\|_{V^M} + \sum_{\pm}\|(u^{\pm}-M)_+\|_{V^{\pm}} \le C M \left( \int_0^T |\Sigma_M(t)|^{\frac{r}{q}} dt + \sum_{\pm} \int_0^T |\Omega^{\pm}_M(t)|^{\frac{r}{q}}dt \right)^{\frac{1+\kappa}{r}}
\end{align*}
for $r \in [2,\infty]$ and $q \in \left[2, \frac{2n}{n-2}\right]$ (here $\frac{1}{0}= \infty$ for $n=2$). Then, we have
\begin{align*}
u^{\pm} \in L^{\infty}((0,T)\times \Omega^{\pm}),\quad u^M \in L^{\infty}((0,T)\times \Sigma).
\end{align*}
\end{lemma}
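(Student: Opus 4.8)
The plan is to adapt the classical De Giorgi–type iteration argument from \cite[Chapter II, Theorem 6.1]{Ladyzenskaja} to the coupled bulk--interface setting. The key point is that the hypothesis already packages all the geometric information into a single Caccioppoli-type inequality for the truncations $(u^{\pm}-M)_+$ and $(u^M-M)_+$, with the bulk domains $\Omega^{\pm}$ and the interface $\Sigma$ treated simultaneously on the right-hand side. So the proof is essentially a bookkeeping exercise: introduce a decreasing sequence of levels $M_k := \widehat{M}(2 - 2^{-k})$ (or any sequence with $M_k \nearrow 2\widehat{M}$, the exact constant being irrelevant), set
\begin{align*}
y_k := \int_0^T |\Sigma_{M_k}(t)|^{\frac{r}{q}}\, dt + \sum_{\pm}\int_0^T |\Omega^{\pm}_{M_k}(t)|^{\frac{r}{q}}\, dt,
\end{align*}
and derive a recursive inequality of the form $y_{k+1} \le C\, b^k\, y_k^{1+\delta}$ for suitable constants $C>0$, $b>1$, $\delta>0$. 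Once this is in place, the standard fast-geometric-convergence lemma (\cite[Chapter II, Lemma 5.6]{Ladyzenskaja}) gives $y_k \to 0$ provided $y_0$ is small, i.e. provided $\widehat{M}$ is taken large enough, which forces $u^{\pm} \le 2\widehat{M}$ a.e.\ and $u^M \le 2\widehat{M}$ a.e.; applying the same argument to $-u^{\pm}$, $-u^M$ yields the two-sided $L^\infty$ bound.

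First I would fix $k$ and estimate $y_{k+1}$ in terms of the norms $\|(u^M-M_k)_+\|_{V^M}$ and $\|(u^{\pm}-M_k)_+\|_{V^{\pm}}$. The elementary inclusion-type bound is the workhorse here: on the set where $u^M > M_{k+1}$ one has $(u^M - M_k)_+ \ge M_{k+1}-M_k = \widehat{M}\, 2^{-(k+1)}$, so
\begin{align*}
|\Sigma_{M_{k+1}}(t)| \le \left(\frac{2^{k+1}}{\widehat{M}}\right)^{q}\int_{\Sigma}(u^M-M_k)_+^{q}\, d\x,
\end{align*}
and likewise for the bulk pieces. Then I would interpolate: using $(u^M - M_k)_+ \in V^M = L^\infty(L^2)\cap L^2(H^1)$ together with the Sobolev embedding $H^1(\Sigma)\hookrightarrow L^{q}(\Sigma)$ for $q\in[2,\tfrac{2(n-1)}{n-3}]$ — and the analogous $H^1(\Omega^{\pm})\hookrightarrow L^{q}(\Omega^{\pm})$ for $q\in[2,\tfrac{2n}{n-2}]$ — one controls $\int_0^T\|(u^M-M_k)_+\|_{L^{q}(\Sigma)}^{r}\,dt$ and its bulk counterpart by $\|(u^M-M_k)_+\|_{V^M}^{r}$ (plus bulk terms), up to interpolation exponents that I will not grind through. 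Raising to the power $r/q$, integrating in $t$, and invoking the hypothesis with $M = M_k$ turns the right-hand side into $C M_k^{r}\, y_k^{1+\varkappa}$; collecting the factors $(2^{k+1})^{q}$ and $M_k^{r} \le (2\widehat{M})^{r}$ produces exactly $y_{k+1}\le C\, b^{k}\, y_k^{1+\delta}$ with $\delta = \varkappa$ and $b$ a fixed power of $2$.

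The main obstacle — and the only genuinely non-routine point — is making the interpolation inequality for the truncations work uniformly across the three regions with the same pair of exponents $(r,q)$, since $\Sigma$ has dimension $n-1$ while $\Omega^{\pm}$ has dimension $n$, so the admissible Sobolev ranges differ. The hypothesis sidesteps this by fixing one common $q \in [2,\tfrac{2n}{n-2}]$; I would check that this choice is also admissible for the $(n-1)$-dimensional interface (it is, since $\tfrac{2n}{n-2} \le \tfrac{2(n-1)}{n-3}$ for $n \ge 4$, and for $n=2,3$ the interface embedding holds for all finite $q$), and that the corresponding interpolation identity $\tfrac{1}{r} = \theta\cdot\tfrac12 + (1-\theta)\cdot\tfrac{1}{\text{(endpoint)}}$ with the mixed space-time norm is satisfied for the $r$ in the statement. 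The trace coupling $u^{\pm}|_\Sigma = u^M$ plays no role in this particular lemma — it is already absorbed into the validity of the assumed Caccioppoli inequality — so no boundary terms need to be tracked here. Once the recursion $y_{k+1}\le C b^k y_k^{1+\varkappa}$ is established, the conclusion is immediate from the cited convergence lemma, choosing $\widehat{M}$ so large that $y_0 \le C^{-1/\varkappa} b^{-1/\varkappa^2}$.
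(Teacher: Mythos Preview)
Your proposal is correct and follows exactly the route the paper intends: the paper's own proof consists of the single sentence ``The proof can easily be adapted from \cite[II, Theorem 6.1]{Ladyzenskaja} and is skipped here,'' and your outline is precisely that adaptation, including the one nontrivial observation that the common exponent $q\in[2,\tfrac{2n}{n-2}]$ lies in the admissible Sobolev range for both the $n$-dimensional bulk domains and the $(n-1)$-dimensional interface. One minor phrasing point: rather than ``choosing $\widehat{M}$ so large,'' you should choose the starting level $M_0\ge\widehat{M}$ large enough to make $y_0$ small (possible since $u^{\pm},u^M\in L^2$ forces $y_0\to 0$ as $M_0\to\infty$), which is harmless because the recursion constant $C_0=(4\beta\gamma)^r$ is independent of $M_0$.
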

\begin{proof}
The proof can easily be adapted from \cite[II, Theorem 6.1]{Ladyzenskaja} and is skipped here.
\end{proof}

\begin{proof}[Proof of Proposition \ref{RegularityResultsMacroscopicSolution}]

First of all, we obtain $c_0^{\pm} \in L^2((0,T),H^2(\Omega^{\pm}))$ and $c_0^M \in L^2((0,T),H^2(\Sigma))$ by testing $\eqref{VE_Omega}$ with the difference quotient of $(\nabla c_0^+,\nabla_{\x} c_0^M, \nabla c_0^-)$ with respect to the spatial variable for the $i$-th component with $i=1,\ldots,n-1$ ($c_0$ can be extended periodically to $\R^{n-1} \times (-H,H)$). For more details, we refer to \cite[Lemma 2.4]{GahnEffectiveTransmissionContinuous}.

The regularity $c_0^{\pm} \in H^1((0,T),H^1(\Omega^{\pm}))$ and $c_0^M \in H^1((0,T),H^1(\Sigma))$ is formally obtained by differentiating the problem $\eqref{MacroscopicProblemZeroOrder}$ with respect to time. Rigorously, this can be done by a priori estimates for a Galerkin approximation. For more details we refer to \cite{EvansPartialDifferentialEquations}. Here, we make use of the differentiability of $f^{\pm}$ and $g^M$ with respect to $t$ from the additional assumptions \ref{ZusatzbedingungFplusminus} and \ref{ZusatzbedingungGM}, as well as the essential boundedness of $\nabla_{\x} c^{0,M}$ and $\nabla_{\x} c^{0,\pm}$  from \ref{ZusatzbedingungAW}.

It remains to check the essential boundedness of $\nabla_{\x} c_0^{\pm}$ and $\nabla_{\x} c_0^M$. Formally this can be done by differentiating the equation$\eqref{MacroscopicProblemZeroOrder}$ with respect to $x_i$ for $i=1,\ldots,n-1$. Let us do this in a more rigorous way, where the ideas can be found in \cite{Ladyzenskaja}. As test functions in $\eqref{VE_Omega}$, we choose $\partial_{x_i} \phi^{\pm} $ and $\partial_{x_i} \phi^M$ with $\phi \in C^{\infty}_{\#}(\Omega)$ and $\phi^{\pm} = \phi|_{\Omega^{\pm}}$ and $\phi^M = \phi|_{\Sigma}$. Then, by integration by parts we obtain with the periodic boundary conditions on the lateral boundary and the regularity results from above the following weak formulation for $v_i^{\pm}:= \partial_{x_i} c_0^{\pm}$ and $v_i^M := \partial_{x_i} c_0^M$
\begin{align*}
\sum_{\pm}& \left[ \int_{\Omega^{\pm}} \partial_t v_i^{\pm} \phi^{\pm} dx + \int_{\Omega^{\pm}} D^{\pm} \nabla v_i^{\pm} \nabla \phi^{\pm} dx \right]
\\
&+ |Z|\int_{\Sigma} \partial_t v_i^M \phi^M d\x + |Z|\int_{\Sigma} D^{M,\ast} \nabla_{\x} v_i^M \nabla_{\x} \phi^M d\x 
\\
=& \int_{\Sigma}\int_Z \partial_z g^M(c_0^M) v_i^M \phi^M dy d\x + \sum_{\pm} \int_{\Omega^{\pm}} \int_{Y^n} \partial_z f^{\pm}(c_0^{\pm}) v_i^{\pm}\phi^{\pm} dy dx,
\end{align*}
together with the initial condition $v_i^{\pm}(0) = \partial_{x_i} c^{0,\pm}$ and $v_i^M(0) =\partial_{x_i} c^{0,M}$ (this is exactly the weak formulation we obtain by formally differentiating $\eqref{MacroscopicProblemZeroOrder}$ with respect to $x_i$). By density, see \cite[Lemma 5.3]{GahnEffectiveTransmissionContinuous}, this equation is valid for all $\phi \in H^1(\Omega)$ with $\phi|_{\Sigma} \in H^1(\Sigma)$. For $M\geq M_0$ (see \ref{ZusatzbedingungAW}), we choose $\phi^{\pm}:= (v_i^{\pm} - M)_+ $ and $\phi^M:= (v_i^M - M)_+$ with $(\cdot)_+:= \max\{0,\cdot\}$ and obtain by an elemental calculation using the essential boundedness of $\partial_z f^{\pm}$ and $\partial_z g^M$ (which follows from the uniform Lipschitz continuity with respect to $Z$), the estimate (for a constant $c_0>0$)
\begin{align*}
\sum_{\pm}& \left[ \frac12 \frac{d}{dt} \big\|(v_i^{\pm} - M)_+ \big\|^2_{L^2(\Omega^{\pm})} + c_0 \big\|\nabla (v_i^{\pm} - M)_+\big\|^2_{L^2(\Omega^{\pm})} \right]
\\
&+ \frac12 \frac{d}{dt} \big\|(v_i^M - M)_+ \big\|^2_{L^2(\Sigma) } + c_0 \big\|\nabla_{\x} (v_i^M - M)_+ \big\|^2_{L^2(\Sigma)}
\\
\le& C \left[ \big\|(v_i^M - M)_+\big\|^2_{L^2(\Sigma)} + \sum_{\pm} \big\|(v_i^{\pm} - M)_+ \big\|^2_{L^2(\Omega^{\pm})} \right]
\\
&+ CM \int_{\Sigma} (v_i^M - M)_+ d\x + CM\sum_{\pm} \int_{\Omega^{\pm}} (v_i^{\pm} - M)_+ dx  .
\end{align*}
Integration with respect to time, the condition \ref{ZusatzbedingungAW}, and the Gronwall inequality imply
\begin{align*}
\big\|(v_i^M - M)_+\big\|_{V^M} &+ \sum_{\pm} \big\|(v_i^{\pm} - M)_+ \big\|_{V^{\pm}}
\\
&\le C M \left( \int_0^T |\Sigma_M(t)|dt + \sum_{\pm} \int_0^T |\Omega^{\pm}_M(t)|dt \right)^{\frac{1}{2}}.
\end{align*}
An application of Lemma \ref{VerallgemeinerungLady} with $r=q = 2\frac{n+2}{n}$ and $\kappa = \frac{2}{n}$ gives the desired result.
\end{proof}

\section*{Acknowledgment}

The authors acknowledge the support by the Center for Modelling and Simulation in the Biosciences (BIOMS) of the University of Heidelberg. The first author was supported by the   project  SCIDATOS  (Scientific  Computing  for  Improved  Detection  and  Therapy  of  Sepsis), which was  funded  by  the  Klaus Tschira Foundation, Germany (Grant number 00.0277.2015).

  \bibliographystyle{abbrv} 
  \bibliography{literature}

\end{document}